\newcolumntype{d}[1]{D{.}{.}{#1}}
\newtheorem{theorem}{Theorem}[section]
\newtheorem{example}{Example}[section]
\newtheorem{remark}{Remark}[section]
\begin{document}

\pagenumbering{arabic}
\baselineskip=1.3pc

\vspace*{0.5in}

\begin{center}

{\Large{\bf An energy-based discontinuous Galerkin method for the wave equation with nonsmooth solutions}}

\end{center}

\vspace{.03in}

\centerline{
Yangxin Fu\footnote{School of Mathematical Sciences,
         University of Science and Technology of China,
         Hefei, Anhui 230026, P.R. China.  
         E-mail: yxfu@mail.ustc.edu.cn.},
 Yan Jiang\footnote{School of Mathematical Sciences,
         University of Science and Technology of China, Hefei,
         Anhui 230026, P.R. China.  
         E-mail: jiangy@ustc.edu.cn.
         Research supported by NSFC grant 12271499. }
         and Siyang Wang\footnote{Corresponding author. Department of Mathematics and Mathematical Statistics, Umeå University, Umeå 90187, Sweden. Email: siyang.wang@umu.se
}
}

\vspace{.1in}

\noindent
{\bf Abstract: }
We develop a stable and high-order accurate discontinuous Galerkin method for the second order wave equation, specifically designed to handle nonsmooth solutions. Our approach integrates the energy-based discontinuous Galerkin method with the oscillation-free technique to effectively suppress spurious oscillations near solution discontinuities. Both stability analysis and apriori error estimates are established for common choices of numerical fluxes. We present a series of numerical experiments to confirm the optimal convergence rates for smooth solutions and its robustness in maintaining oscillation-free behavior for nonsmooth solutions in wave equations without or with nonlinear source terms.

\vspace{.1in}

\noindent
\textbf{Key Words:}
discontinuous Galerkin method, wave equation, nonsmooth solution, oscillation free, high order accuracy

\section{Introduction}

Wave propagation as a ubiquitous phenomenon governing energy transfer, pervades diverse scientific, engineering, and industrial domains. The linear second-order hyperbolic partial differential equation serves as the canonical model for classical wave dynamics, rigorously describing acoustic wave dispersion, elastic medium oscillations, and electromagnetic wave propagation. Beyond linear regimes, nonlinear wave equations emerge as indispensable frameworks for characterizing multiscale interactions from relativistic quantum fields governed by the Klein-Gordon equation to Boussinesq systems modeling nonlinear dispersive waves in coastal hydrodynamics. Such formulations underpin critical applications spanning soliton-mediated energy transport in photonic, seismic inversion for hydrocarbon exploration, and wave-based sensing technologies in defense systems. The inherent complexity of these phenomena, marked by oscillatory singularities and energy cascades, necessitates the development of robust numerical methodologies to ensure computational fidelity in resolving wave interactions.

The discontinuous Galerkin (DG) method is a class of finite element methods that employ discontinuous piecewise polynomial spaces. It was first proposed in 1973 by Reed and Hill to solve the neutron transport equations \cite{reed1973triangular}. Later, the DG method was generalized to different types of equations, including hyperbolic conservation laws \cite{cockburn1999discontinuous, cockburn2001runge}, wave equations \cite{appelo2015new, appelo2020energy, chou2014optimal, grote2006discontinuous, xing2013energy}, and elliptic equations \cite{arnold2002unified}. Due to its inherent advantages such as high-order accuracy, local structure, natural parallelism, and $h$-$p$ adaptivity, the DG method has experienced rapid development. In recent years, DG methods have been demonstrated as effective numerical tools for developing high-order, energy-stable discretizations of time-domain wave propagation problems in complex geometries.

The DG method for the wave equation can be broadly divided into three categories. Firstly, the interior penalty discontinuous Galerkin method (IPDG) \cite{grote2006discontinuous, riviere2003discontinuous, riviere2003discontinuous} discretizes directly in the second-order form. A penalty term is used to ensure coercivity with an appropriately chosen penalty parameter. The second category is the local discontinuous Galerkin method (LDG) \cite{chou2014optimal, xing2013energy, yi2018energy}, where the spatial derivatives are introduced as auxiliary variables. The third category is the energy-based discontinuous Galerkin method (EDG) \cite{appelo2015new, appelo2020energy}, with the time derivative as an auxiliary variable, which is also a natural choice for IPDG with a Runge-Kutta type time discreitzation. The EDG method arises from a general formulation based directly on the Lagrangian form, which is central to the formulation of wave equations in most physical settings. In addition, the EDG method admits a wide variety of mesh-independent energy-conserving or dissipative fluxes. 
 
The aforementioned DG methods are designed primarily for smooth problems. In practical applications, however, initial data may contain discontinuities, or in nonlinear cases, the solution may become discontinuous even when the initial data is smooth. For wave equations with discontinuous solutions, conventional schemes effective for smooth problems often generate spurious oscillations or even fail to converge. To address this challenge, two principal strategies are typically employed. The first type is similar to postprocessing techniques, such as the TVD or TVB limiter \cite{cockburn1989tvb2, cockburn1989tvb, harten1997high, zhong2013simple}. The second type is adding artificial dissipation terms into the scheme. In this direction, the oscillation-free (OF) approach was proposed in \cite{liu2022essentially, liu2024entropy, lu2021oscillation} for first-order hyperbolic equations. The OF approach is a nonlinear scheme with adaptive damping mechanisms using a projection-based methodology such that the damping is small in smooth regions and takes more effect near discontinuities, effectively suppressing oscillations while preserving high-order accuracy in smooth regions \cite{du2023oscillation, liu2022oscillation, peng2025oedg, tao2023oscillation}. Building upon this idea, we  develop a stable and high-order OF-EDG method for the wave equation with nonsmooth solutions, and establish both stability and priori error estimates.

This rest of the paper is organized as follows. In Section \ref{sec:1D}, we begin with a review of the EDG method. Then, we present our proposed formulation of the OF-EDG scheme for one-dimensional problems, and derive an energy estimate to prove that the proposed scheme is stable. In addition, we derive a priori error estimate. In Section \ref{sec:multiD}, we extend the proposed scheme and analysis to multidimensional problems on Cartesian meshes. 
We present a series of numerical examples in Section \ref{sec:num} to verify the theoretical analysis and demonstrate the robustness of the developed method. Finally, concluding remarks are given in Section \ref{sec:con}.

\section{One-dimensional problems}\label{sec:1D}

In this section, we begin by reviewing the EDG method proposed in \cite{appelo2015new} for a one-dimensional model problem. Then we present our new formulation, referred to as the OF-EDG method, which effectively combines the strength of the EDG scheme with the oscillation-free (OF) mechanism. Another key feature of our new formulation is a penalty term that is specially designed to handle piecewise-constant solutions. Finally, we derive an energy estimate and establish a priori error estimate for the proposed method.

\subsection{Review of the EDG scheme}

Let us consider the wave equation in one space dimension,
\begin{equation}\label{eq:wave_1D}
    u_{tt} = u_{xx},\quad (x,t)\in (a,b)\times (0,T],
\end{equation}
with initial conditions
$$u(x,0) = u_0(x), \quad u_t(x,0) = u_1(x).$$ 
Periodic boundary conditions are considered in this work, however, this choice does not constitutes a limitation of the method, which readily accommodates other boundary conditions, e.g., Dirichlet, Neumann and characteristic boundary conditions.

We discretize the computation domain $[a,b]$ by a mesh consisting of cells
\begin{equation}
    I_j = [x_{j-1/2},x_{j+1/2}],\qquad 1\leq j\leq N,
\end{equation}
with
\begin{equation}
    a = x_{1/2}< x_{3/2}<\cdots<x_{N+1/2} = b,
\end{equation}
and denote
\begin{equation}
\begin{aligned}
    & x_j = \dfrac{1}{2}(x_{j-1/2} + x_{j+1/2}), \quad 1\leq j\leq N,\\
    & h = \max_{1\leq j\leq N}h_j, \quad h_j = x_{j+1/2} - x_{j-1/2}, \quad 1\leq j\leq N.
\end{aligned}
\end{equation}
We also assume that the mesh is quasi-uniform, that is, there exists a constant $\rho > 0$ such that for all $j$ hold $\rho h\leq h_j$ as $h$ goes to zero.

Associated with the meshes, we define the discontinuous finite element space as follows,
\begin{equation}
    V_h^k = \{v\in L^2([a,b]):v|_{I_j}\in P^k(I_j),j = 1,2,...,N\},
\end{equation}
where $P^k(I_j)$ is the space of polynomials of degree at most $k$ on $I_j$. To facilitate the DG formulation, we denote the jump of $w_h$ at $x_{j+1/2}$ as 
$$[\![w_h]\!]_{j+1/2} = w_h|^+_{j+1/2} - w_h|_{j+1/2}^-,$$  
where $w_h|_{j+1/2}^{\pm} = \lim_{\epsilon\to 0^+} w_h(x_{j+1/2}\pm \epsilon)$ represents the left or right limit of $w_h$ at $x_{j+1/2}$.

The EDG method seeks approximation to a corresponding system by introducing the time derivative as a new variable $v = u_t$, yielding
\begin{equation}\label{eq:wave_sys_1D}
\begin{cases}
    u_t = v,\\
    v_t = u_{xx},\\
\end{cases} \quad (x,t)\in (a,b)\times (0,T].
\end{equation}
We choose the numerical solution space $u_h\in V_h^p$ and $v_h\in V_h^q$. 
Next, we test equation \eqref{eq:wave_sys_1D} by $\phi^{(u)}_{xx}$ and $\phi^{(v)}$, respectively, where $\phi^{(u)}\in V_h^p$ and $\phi^{(v)}\in V_h^q$. 
After integrating in space and using the integration by parts formula, we obtain 
\begin{equation}\label{eq:ESDG}
\begin{cases}
    \displaystyle
    \int_{I_j} \left( (u_h)_t - v_h \right)_x \phi^{(u)}_{x} dx 
    = \left(\widehat{v}_{j+1/2} - v_h|^{-}_{j+1/2} \right) \phi_x^{(u)}|^{-}_{j+1/2} 
    -
    \left( \widehat{v}_{j-1/2} - v_h|^{+}_{j-1/2} \right) \phi_x^{(u)}|^{+}_{j-1/2}, \\
    \displaystyle \int_{I_j} \left( (v_h)_t \phi^{(v)} + (u_h)_x \phi^{(v)}_{x} \right) dx 
    = \widehat{u_x}|_{j+1/2}\phi^{(v)}|^{-}_{j+1/2} - 
    \widehat{u_x}|_{j-1/2} \phi^{(v)}|^{+}_{j-1/2},
\end{cases}
\end{equation}
where $\widehat{v}_{j+1/2},\widehat{u_x}|_{j+1/2}$ are the numerical fluxes defined on cell interfaces. The numerical fluxes take the general form,
\begin{equation}\label{eq:numerical_flux}
\begin{aligned}
    \widehat{v} =&  \alpha v_h^+ + (1-\alpha)v_h^-+\tau[\![(u_h)_x]\!] 
    = \frac{v_h^+ + v_h^-}{2} - (\frac{1}{2}-\alpha) [\![v_h]\!]  +\tau[\![(u_h)_x]\!],\\
    \widehat{u_x} =&  (1-\alpha)(u_h)_x^+ +\alpha (u_h)_x^- +\beta[\![v_h]\!]
    = \frac{(u_h)_x^+ + (u_h)_x^-}{2} + (\frac{1}{2}-\alpha) [\![(u_h)_x ]\!] +\beta[\![v_h]\!],
\end{aligned}
\end{equation}
where $\alpha\in [0,1]$ and $\tau \geq 0,\ \beta\geq 0$.
Special choices of the parameters are given:
\begin{align}
\begin{array}{lll}
    & \text{Central flux (C-flux)}: & \alpha = \dfrac{1}{2}, \quad \beta = \tau = 0.\\
    & \text{Alternating flux (A-flux)}:& \alpha = 0 \,\, \text{or} \,\, 1, \quad \beta = \tau = 0.\\
    & \text{Sommerfeld flux (S-flux)}: & \alpha = \dfrac{1}{2}, \quad \beta = \dfrac{1}{2s}, \quad \tau = \dfrac{s}{2},\,\, s>0.
\end{array}
\end{align}

We note that for constant test function $\phi^{(u)}$, the first equation in \eqref{eq:ESDG} is reduced to a trivial relation $0=0$. Consequently, we need to complement \eqref{eq:ESDG} by 
$\int_{I_j} (u_h)_t dx = \int_{I_j} v_h dx$.
The EDG scheme can then be stated as follows: find $u_h\in V_h^p,v_h\in V_h^q$ such that the following weak formulation holds for any test function $\phi^{(u)}\in V_h^p$ and $\phi^{(v)}\in V_h^q$,  $j=1,2,\ldots,N$,
\begin{equation}\label{eq:ESDG_full}
\begin{cases}
    \displaystyle
    \int_{I_j} \left( (u_h)_t - v_h \right) dx =0, \\
    \displaystyle
    \int_{I_j} \left( (u_h)_t - v_h \right)_x \phi^{(u)}_{x} dx 
    = \left(\widehat{v}_{j+1/2} - v_h|^{-}_{j+1/2} \right) \phi_x^{(u)}|^{-}_{j+1/2} 
    -\left( \widehat{v}_{j-1/2} - v_h|^{+}_{j-1/2} \right) \phi_x^{(u)}|^{+}_{j-1/2}, \\
    \displaystyle \int_{I_j} \left( (v_h)_t \phi^{(v)} + (u_h)_x \phi^{(v)}_{x} \right) dx
    = \widehat{u_x}|_{j+1/2}\,\phi^{(v)}|^{-}_{j+1/2} - 
    \widehat{u_x}|_{j-1/2} \, \phi^{(v)}|^{+}_{j-1/2}.
\end{cases}
\end{equation}

The EDG scheme can be easily generalized to the wave equation in multi-dimensions with general boundary conditions \cite{appelo2015new}. The technique has also been applied to the semilinear wave equation \cite{appelo2020energy}, the elastic wave equation \cite{appelo2018energy}, and the acoustic-elasto system \cite{appelo2019energy}. These schemes possess good properties such as energy stability and optimal convergences.  However, for problems with discontinuous solutions, spurious oscillations occur near the discontinuities. To overcome this challenge, we design a new scheme that maintains high order accuracy in the smooth region, and captures discontinuities without oscillation.

\subsection{The OF-EDG scheme}
Now we proceed to design a DG scheme that poses high-order accuracy and can control spurious oscillations automatically. In particular, we follow the idea of the OFDG scheme \cite{lu2021oscillation} with a damping term in each cell to avoid spurious oscillations. However, we have found that the scheme may give the wrong location of discontinuities for piecewise-constant solutions. To address this issue, we add a penalty term inspired by the IPDG scheme \cite{grote2006discontinuous}.

The new semi-discrete DG scheme is defined as follows:
find $u_h\in V_h^p$ and $v_h\in V_h^{q}$ such that 
\begin{equation}\label{eq:DG_1D}
\begin{cases}
    \displaystyle
    {\int_{I_j} \left( (u_h)_t - v_h \right) dx }&=0, \\
    \displaystyle
    \int_{I_j} \left( (u_h)_t - v_h \right)_x \phi^{(u)}_{x} dx 
    &= \left(\widehat{v}_{j+1/2} - v_h|^{-}_{j+1/2} \right) \phi_x^{(u)}|^{-}_{j+1/2} -
    \left( \widehat{v}_{j-1/2} - v_h|^{+}_{j-1/2} \right) \phi_x^{(u)}|^{+}_{j-1/2} \\
    &\quad
    +\dfrac{c}{h^2} \left( [\![u_h]\!]_{j+1/2} \phi^{(u)}|^-_{j+1/2} -[\![u_h]\!]_{j-1/2} \phi^{(u)}|^+_{j-1/2} \right)\\
    &\quad{\displaystyle  - \sum_{l=1}^{p} \frac{\sigma_j^l}{h_j} \int_{I_j} \left((u_h)_x - \mathbb{P}^{l-1}(u_h)_x\right) \phi^{(u)}_x dx}, 
    \qquad \forall \, {\phi^{(u)}\in V_h^p(I_j)}, \\
    \displaystyle \int_{I_j} \left( (v_h)_t \phi^{(v)} + (u_h)_x \phi^{(v)}_{x} \right) dx 
    &= \widehat{u_x}|_{j+1/2}\phi^{(v)}|^{-}_{j+1/2} - 
    \widehat{u_x}|_{j-1/2} \phi^{(v)}|^{+}_{j-1/2}\\
    &\quad {\displaystyle - \sum_{l=0}^{q} \dfrac{\tilde{\sigma}_j^l}{h_j} \int_{I_j} \left(v_h - \mathbb{P}^{l-1}v_h \right) \phi^{(v)} dx},
    \qquad \forall \, {\phi^{(v)}\in V_h^q(I_j)}.
\end{cases}
\end{equation}

\noindent 
Here, $c>0$ is  the penalty parameter and the corresponding term is referred to as the penalty term. In the OF damping terms, the operator $\mathbb{P}^l, l\geq 0$, is the standard local $L^2$ projection, i.e., for any function $w$, find $\mathbb{P}^lw \in  V^l_h$ such that
\begin{equation}
    \int_{I_j}(\mathbb{P}^lw - w)\phi(x)dx = 0, \quad \forall \, \phi(x) \in V_h^l(I_j).
\end{equation}
In addition, we use the convention $\mathbb{P}^{-1} = \mathbb{P}^0$. 
The damping parameters $\sigma_j^l \geq 0, \tilde{\sigma}_j^l \geq 0$ control dissipation, and take the form: 
\begin{equation}
    \begin{aligned}
        \sigma^l_j =&  \dfrac{2(2l+1)}{(2p-1)}\dfrac{h_j^l}{l!}([\![\partial_x^lu_h]\!]^2_{j+1/2}+[\![\partial_x^lu_h]\!]^2_{j-1/2})^{1/2}, \quad l\geq 1, \\
        \tilde{\sigma}^l_j =&  \dfrac{2(2l+1)}{(2q-1)}\dfrac{h_j^{l+1}}{l!}([\![\partial_x^lv_h]\!]^2_{j+1/2}+[\![\partial_x^lv_h]\!]^2_{j-1/2})^{1/2}, \quad l\geq 0.\\
    \end{aligned}
\end{equation}

We emphasize that both the damping terms and the penalty term play crucial roles in handling discontinuous solutions.  The damping terms are designed to be negligible in regions where the solution is smooth, but take effect in regions when the solution is less regular, effectively suppressing spurious oscillations. Since the original EDG fluxes and the damping terms depend only on the derivatives of the numerical solution, they have no influence when the solution is constant within each cell. This scenario occurs, for example, when the initial data is piecewise constant with discontinuities located on the cell interfaces. In such cases,  the penalty term becomes essential because it is based on the jump in the numerical solution $u_h$. The $h$-scaling in the coefficients of the penalty term and the damping terms is carefully chosen so that these terms do not degrade convergence rates for problems with smooth solutions, while still are strong enough to suppress oscillations in regions where the solution lacks smoothness. In Section \ref{sec:num}, we demonstrate in numerical tests that both the damping terms and penalty term are necessary for discontinuous solution,

\subsection{Stability analysis}

In the following, we derive stability analysis for the semi-discrete OF-EDG scheme \eqref{eq:DG_1D} with general numerical fluxes \eqref{eq:numerical_flux}.

\begin{theorem}[Semi-discrete stability]
    The OF-EDG scheme \eqref{eq:DG_1D} with general numerical flux \eqref{eq:numerical_flux} satisfies
    \begin{equation}\label{eq:ES_1D}
    \dfrac{d \, \mathbb{E}_h}{dt}\leq 0
    \end{equation} 
    for any positive integers $p$ and $q$, and parameters $c,\tau,\beta\geq 0$, where the discrete energy is defined as  
    \begin{equation} \label{eq:energy_1D}
        \mathbb{E}_h = \int_{\Omega}((u_h)_x^2 +(v_h)^2)dx.
    \end{equation}
\end{theorem}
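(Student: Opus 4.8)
The plan is to follow the standard energy argument for energy-based DG schemes: pick test functions that reconstruct $\tfrac{d}{dt}\mathbb{E}_h$ and then check that every surviving term is dissipative. Differentiating \eqref{eq:energy_1D} gives
\[
\tfrac{1}{2}\frac{d\mathbb{E}_h}{dt} = \sum_{j}\int_{I_j}(u_h)_x\,((u_h)_t)_x\,dx + \sum_{j}\int_{I_j}v_h\,(v_h)_t\,dx .
\]
To reproduce the first sum I would set $\phi^{(u)}=u_h$ in the second equation of \eqref{eq:DG_1D}, so that $\phi^{(u)}_x=(u_h)_x$, and to reproduce the second sum I would set $\phi^{(v)}=v_h$ in the third equation. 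Summing each identity over all cells and adding them, the volume cross terms $-\int_{I_j}(v_h)_x(u_h)_x\,dx$ and $+\int_{I_j}(u_h)_x(v_h)_x\,dx$ cancel exactly, leaving $\tfrac12\,d\mathbb{E}_h/dt$ on the left and only the flux, penalty, and damping contributions on the right. The first equation of \eqref{eq:DG_1D} is not needed, since $\mathbb{E}_h$ depends on $u_h$ only through $(u_h)_x$.

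I would then dispose of the two manifestly dissipative pieces. For the damping terms, the choices $\phi^{(u)}=u_h$, $\phi^{(v)}=v_h$ together with the orthogonality of the local $L^2$ projection turn each integral into a squared residual, e.g. $\int_{I_j}\big((u_h)_x-\mathbb{P}^{l-1}(u_h)_x\big)(u_h)_x\,dx=\|(u_h)_x-\mathbb{P}^{l-1}(u_h)_x\|_{L^2(I_j)}^2\ge 0$, so with $\sigma_j^l,\tilde\sigma_j^l\ge0$ the full damping contribution is $\le 0$. For the penalty term I would sum $\tfrac{c}{h^2}\big([\![u_h]\!]_{j+1/2}\,u_h|^-_{j+1/2}-[\![u_h]\!]_{j-1/2}\,u_h|^+_{j-1/2}\big)$ over $j$ and regroup by interface; the two neighboring contributions at $x_{j+1/2}$ combine to $-\tfrac{c}{h^2}[\![u_h]\!]_{j+1/2}^2\le 0$.

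The main obstacle is the flux term, which requires careful interface bookkeeping. Collecting the boundary contributions of both equations at a generic interface $x_{j+1/2}$ and writing $a=(u_h)_x$, $b=v_h$, the combined contribution is $-\widehat v\,[\![a]\!]+(b^+a^+-b^-a^-)-\widehat{u_x}\,[\![b]\!]$. Using the elementary identity $b^+a^+-b^-a^-=\{a\}[\![b]\!]+\{b\}[\![a]\!]$, where $\{w\}=(w^++w^-)/2$ denotes the average, this becomes $[\![a]\!]\,(\{b\}-\widehat v)+[\![b]\!]\,(\{a\}-\widehat{u_x})$. Substituting the flux definitions \eqref{eq:numerical_flux} gives $\{b\}-\widehat v=(\tfrac12-\alpha)[\![b]\!]-\tau[\![a]\!]$ and $\{a\}-\widehat{u_x}=-(\tfrac12-\alpha)[\![a]\!]-\beta[\![b]\!]$; the $(\tfrac12-\alpha)[\![a]\!][\![b]\!]$ contributions cancel, and what remains at each interface is exactly $-\tau[\![(u_h)_x]\!]^2-\beta[\![v_h]\!]^2\le 0$ since $\tau,\beta\ge0$. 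Summing over all interfaces---periodicity makes the interface sum close up with no leftover boundary terms---and adding the penalty and damping contributions yields $\tfrac12\,d\mathbb{E}_h/dt\le 0$, which is \eqref{eq:ES_1D}. The only genuine care needed lies in fixing the signs during the regrouping and in verifying that the $\alpha$-dependent terms cancel regardless of the value of $\alpha\in[0,1]$; everything else is routine.
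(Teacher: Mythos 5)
Your proposal is correct and follows essentially the same route as the paper: test with $(\phi^{(u)},\phi^{(v)})=(u_h,v_h)$, sum over cells using periodicity, and observe that the flux contributions reduce to $-\tau[\![(u_h)_x]\!]^2-\beta[\![v_h]\!]^2$ at each interface, the penalty to $-\tfrac{c}{h^2}[\![u_h]\!]^2$, and the damping terms to nonpositive squared projection residuals via the orthogonality of $\mathbb{P}^{l-1}$. The only difference is organizational: you regroup the boundary terms interface-by-interface with the average/jump identity before inserting the flux definitions, whereas the paper substitutes the fluxes cell-by-cell and then sums; the algebra and the conclusion are identical.
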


\begin{proof}
    Taking test function $(\phi^{(u)},\phi^{(v)}) = (u_h,v_h)$ in \eqref{eq:DG_1D} and summing them, we  obtain
    \begin{equation}\label{eq:proof_add1}
    \begin{aligned}
        \int_{I_j} (u_h)_{xt} (u_h)_x + (v_h)_t v_h dx 
    = & \int_{I_j} (v_h)_x (u_h)_{x} dx-\int_{I_j} (u_h)_x(v_h)_xdx \\
     &+ \widehat{u_x}|_{j+1/2}v_h|^{-}_{j+1/2} + \left(\widehat{v}_{j+1/2} - v_h|^{-}_{j+1/2} \right) (u_h)_x|^{-}_{j+1/2} \\
     &- \widehat{u_x}|_{j-1/2}v_h|^{+}_{j-1/2} - \left(\widehat{v}_{j-1/2} - v_h|^{+}_{j-1/2} \right) (u_h)_x|^{+}_{j-1/2} \\
     &+\dfrac{c}{h^2}\left( [\![u_h]\!]|_{j+1/2} u_h|_{j+1/2}^- - [\![u_h]\!]|_{j-1/2}u_h|_{j-1/2}^+ \right)\\
     &- \sum_{l=1}^{p} \frac{\sigma_j^l}{h_j} \int_{I_j} \left((u_h)_x - \mathbb{P}^{l-1}(u_h)_x\right)(u_h)_x dx \\
     &-  \sum_{l=0}^{q} \frac{\tilde{\sigma}_j^l}{h_j} \int_{I_j} \left(v_h - \mathbb{P}^{l-1}v_h \right) v_h dx. 
    \end{aligned}
    \end{equation}

    \noindent
    Substituting the general flux \eqref{eq:numerical_flux} into the second term and third term on the right hand side yields
    \begin{align*}
         \widehat{u_x}|_{j+1/2}v_h|^{-}_{j+1/2} +& \left(\widehat{v}_{j+1/2} - v_h|^{-}_{j+1/2} \right) (u_h)_x|^{-}_{j+1/2} 
        - \widehat{u_x}|_{j-1/2}v_h|^{+}_{j-1/2} - \left(\widehat{v}_{j-1/2} - v_h|^{+}_{j-1/2} \right) (u_h)_x|^{+}_{j-1/2}  \\
        =& \left( (1-\alpha)(u_h)_x^++\alpha(u_h)_x^-+\beta[\![v_h]\!] \right)|_{j+1/2} \, v_h|^{-}_{j+1/2} \nonumber\\
    &+ \left((\alpha v_h^+ +(1-\alpha) v_h^- +\tau[\![(u_h)_x]\!])|_{j+1/2} -v_h|^{-}_{j+1/2} \right) (u_h)_x|^{-}_{j+1/2} \nonumber\\
    &- \left( (1-\alpha)(u_h)_x^++\alpha(u_h)_x^-+\beta[\![v_h]\!] \right)|_{j-1/2} \, v_h|^{+}_{j-1/2} \nonumber\\
    &- \left((\alpha v_h^++(1-\alpha) v_h^-+\tau[\![(u_h)_x]\!])|_{j-1/2} - v_h|^{+}_{j-1/2} \right) (u_h)_x|^{+}_{j-1/2}\nonumber\\
    =&\ \alpha v_h|_{j+1/2}^+(u_h)_x|_{j+1/2}^-
    -\alpha v_h|_{j-1/2}^+(u_h)_x|_{j-1/2}^- \\
    &+(1-\alpha)v_h|_{j+1/2}^-(u_h)_x|_{j+1/2}^+ 
    -(1-\alpha)v_h|_{j-1/2}^-(u_h)_x|_{j-1/2}^+\\
    & +\tau[\![(u_h)_x]\!]_{j+1/2} (u_h)_x|_{j+1/2}^-
    -\tau[\![(u_h)_x ]\!]_{j-1/2} (u_h)_x|_{j-1/2}^+ \\
    &+\beta[\![v_h]\!]_{j+1/2} \, v_h|_{j+1/2}^-
    -\beta[\![v_h]\!]_{j-1/2}\, v_h|_{j-1/2}^+ . 
    \end{align*}

\noindent    
Summing \eqref{eq:proof_add1} over $j$ with periodic boundary condition yields
\begin{equation*}
\begin{aligned}
    & \frac{1}{2}\frac{d}{dt} \sum_{j} \int_{I_j} 
    \left( (u_h)_x^2 + (v_h)^2\right) dx \\
    =&{-\tau\sum_{j}[\![(u_h)_x]\!]_{j+1/2}^2-\beta\sum_{j}[\![v_h]\!]_{j+1/2}^2}
    -\sum_{j}\dfrac{c}{h^2}[\![u_h]\!]_{j+1/2}^2 \nonumber\\
    &-\sum_{j} \sum_{l=1}^{p} 
    \dfrac{\sigma_j^l}{h_j} \int_{I_j} \left((u_h)_x -\mathbb{P}^{l-1}(u_h)_x \right)^2 dx
    - \sum_{j} \sum_{l=0}^{q} \frac{\tilde{\sigma}_j^l}{h_j} \int_{I_j} \left(v_h - \mathbb{P}^{l-1}v_h \right)^2 dx . 
\end{aligned}
\end{equation*}
The desired energy estimate is obtained when $c \geq 0$, $\tau \geq0$, $\beta\geq0$.
\end{proof}

\begin{remark}
   Without the penalty term and the damping terms, the scheme with A-flux or C-flux is energy-conserving since $\tau = \beta = 0$, meanwhile the scheme with S-flux is energy dissipative; after adding the penalty term and the damping terms, the scheme always dissipates energy.
\end{remark}

\subsection{A priori error estimate}

We establish a priori error estimates in the energy norm of the OF-EDG scheme \eqref{eq:DG_1D}.

\begin{theorem}[Error estimate]\label{Thm_err}
    Assume the exact solution $u\in H^{p+1}(\Omega)$, $v\in H^{q+1}(\Omega)$, and $p-2\leq q\leq p$, then   
    \begin{equation}\label{eq:error_1D}
        \Vert(u-u_h)_x\Vert^2_{L^2(\Omega)} + \Vert v-v_h\Vert_{L^2(\Omega)}^2 \lesssim h^{2\gamma}\left( |u|^2_{H^{p+1}(\Omega)} +|v|^2_{H^{q+1}(\Omega)} \right),
    \end{equation}
    where $\gamma=\min(p', q')$, and
    \begin{equation}
    p' = 
    \begin{cases} 
    p-1, & \tau=0 , \\
    p-1/2, &\tau> 0,
    \end{cases} \qquad
    q' = 
    \begin{cases} 
    q, & \beta=0, \\
    q+1/2, &\beta> 0.
    \end{cases}
    \end{equation}
    Here, the notation $A \lesssim B$ means that there exists a constant $C_0>0$ independent of $h$ such that $A\leq C_0\,B$.
\end{theorem}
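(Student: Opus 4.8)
The plan is to carry out a standard energy-based DG error analysis: split the error with $L^2$ projections, derive an error equation from consistency and Galerkin orthogonality, reproduce the energy identity of the stability theorem on the projected error, and then estimate the projection-error contributions term by term before closing with Gr\"onwall's inequality. First I would fix the $L^2$ projections $\mathbb{P}^p$ and $\mathbb{P}^q$ and record the tools used repeatedly: the approximation bounds $\|u-\mathbb{P}^p u\|_{L^2}+h\|(u-\mathbb{P}^p u)_x\|_{L^2}\lesssim h^{p+1}|u|_{H^{p+1}}$ (and the analogue for $v$), the local trace estimate, which yields interface bounds such as $|[\![\mathbb{P}^p u]\!]|\lesssim h^{p+1/2}|u|_{H^{p+1}}$ and $|[\![(u-\mathbb{P}^p u)_x]\!]|\lesssim h^{p-1/2}|u|_{H^{p+1}}$, and the inverse inequality for discrete functions. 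I then write $u-u_h=\eta_u-\xi_u$, $v-v_h=\eta_v-\xi_v$ with $\eta_u=u-\mathbb{P}^pu$, $\eta_v=v-\mathbb{P}^qv$ and discrete parts $\xi_u=u_h-\mathbb{P}^pu\in V_h^p$, $\xi_v=v_h-\mathbb{P}^qv\in V_h^q$, noting that $\partial_t$ commutes with the projections so that $(\eta_u)_t=v-\mathbb{P}^pv$, which introduces the projection mismatch $\mathbb{P}^pv-\mathbb{P}^qv$ when $q\neq p$.

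Next I would exploit consistency: since $u$ is smooth, $u_t=v$ and all jumps vanish, so the numerical fluxes are consistent and both the penalty and the OF damping coefficients $\sigma_j^l,\tilde\sigma_j^l$ are zero for the exact solution; hence $(u,v)$ satisfies \eqref{eq:DG_1D}, and testing the resulting error relations with $(\phi^{(u)},\phi^{(v)})=(\xi_u,\xi_v)$ and repeating the manipulations of the stability proof gives an identity of the form
\begin{equation*}
\frac12\frac{d}{dt}\left(\|(\xi_u)_x\|_{L^2}^2+\|\xi_v\|_{L^2}^2\right)+\Theta=\mathcal R,
\end{equation*}
where $\Theta\ge0$ collects exactly the dissipation identified in the stability theorem (the $\tau$, $\beta$, penalty and damping terms, now acting on $\xi$), and $\mathcal R$ gathers every term in which a projection error is paired against $\xi$. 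A crucial structural observation, and the place where the hypothesis $p-2\le q\le p$ enters, is that after integration by parts the interior cross terms $\int_{I_j}(\eta_v)_x(\xi_u)_x$ and $\int_{I_j}(\eta_u)_x(\xi_v)_x$ reduce to boundary contributions: their volume parts $\int_{I_j}\eta_v(\xi_u)_{xx}$ and $\int_{I_j}\eta_u(\xi_v)_{xx}$ vanish by $L^2$-orthogonality precisely because $(\xi_u)_{xx}\in P^{p-2}\subseteq P^q$ (needing $q\ge p-2$) and $(\xi_v)_{xx}\in P^{q-2}\subseteq P^p$ (needing $q\le p$).

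The heart of the estimate is to bound $\mathcal R$ by $C(\|(\xi_u)_x\|_{L^2}^2+\|\xi_v\|_{L^2}^2)+C\,h^{2\gamma}(|u|_{H^{p+1}}^2+|v|_{H^{q+1}}^2)$, up to terms that can be absorbed into $\Theta$. The interior and time-derivative contributions, including the projection mismatch $\mathbb{P}^pv-\mathbb{P}^qv$, are handled by Cauchy--Schwarz, Young's inequality and the approximation bounds, using the cell-average equation (the first line of \eqref{eq:DG_1D}) together with a Poincar\'e-type inequality to control $\|\xi_u\|_{L^2}$ by $\|(\xi_u)_x\|_{L^2}$. The interface flux terms are where the value of $\gamma$ is decided: when $\tau>0$ the cross terms involving $[\![(\xi_u)_x]\!]$ are absorbed into the dissipation $\tau\sum_j[\![(\xi_u)_x]\!]^2$ at the cost of $|[\![(\eta_u)_x]\!]|\lesssim h^{p-1/2}$, yielding the improved exponent $p'=p-1/2$; when $\tau=0$ no such dissipation is present, so these jumps must be controlled by the volume norm through a trace--inverse inequality, which loses half an order and gives $p'=p-1$. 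The analogous dichotomy for $\beta$ produces $q'=q+1/2$ versus $q'=q$. The penalty cross term $\tfrac{c}{h^2}[\![\eta_u]\!][\![\xi_u]\!]$ is harmless: after absorbing its $\xi$-part into the penalty dissipation, the remainder is $\lesssim \tfrac{c}{h^2}[\![\eta_u]\!]^2\lesssim h^{2p-1}$, which is of higher order than the claimed rate.

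I expect the main obstacle to be the nonlinear OF damping terms, since $\sigma_j^l$ and $\tilde\sigma_j^l$ depend on jumps of the unknown numerical solution. My strategy is to use $L^2$-orthogonality to rewrite each damping cross term, for instance
\begin{equation*}
-\frac{\sigma_j^l}{h_j}\int_{I_j}\big((u_h)_x-\mathbb{P}^{l-1}(u_h)_x\big)(\xi_u)_x\,dx
=-\frac{\sigma_j^l}{h_j}\Big(\|(\xi_u)_x-\mathbb{P}^{l-1}(\xi_u)_x\|_{L^2(I_j)}^2+\mathcal C_j^l\Big),
\end{equation*}
where $\mathcal C_j^l$ is the cross term between $(\xi_u)_x$ and the smooth part $(\mathbb{P}^pu)_x$; I would absorb the first, quadratic-in-$\xi$ piece into the damping dissipation in $\Theta$ and bound $\mathcal C_j^l$ by Young's inequality against $\tfrac{\sigma_j^l}{h_j}\|(\mathbb{P}^pu)_x-\mathbb{P}^{l-1}(\mathbb{P}^pu)_x\|_{L^2(I_j)}^2$. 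Controlling the latter requires showing, through a bootstrap/continuity argument that keeps $\|\xi\|$ bounded on a maximal time interval, that $\sigma_j^l$ stays uniformly bounded (its defining jumps split into small projection jumps plus $\xi$-jumps tamed by the inverse inequality) while $\|(\mathbb{P}^pu)_x-\mathbb{P}^{l-1}(\mathbb{P}^pu)_x\|_{L^2(I_j)}\lesssim h^{l+1/2}|u|_{H^{l+1}}$ by approximation theory, so the whole damping remainder is of higher order. Having bounded $\mathcal R$, I arrive at $\frac{d}{dt}(\|(\xi_u)_x\|_{L^2}^2+\|\xi_v\|_{L^2}^2)\lesssim \|(\xi_u)_x\|_{L^2}^2+\|\xi_v\|_{L^2}^2+h^{2\gamma}(|u|_{H^{p+1}}^2+|v|_{H^{q+1}}^2)$, apply Gr\"onwall's inequality with vanishing initial discrete error, and conclude \eqref{eq:error_1D} via the triangle inequality and the approximation bounds for $\eta_u,\eta_v$.
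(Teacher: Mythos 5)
Your overall strategy (projection splitting of the error, an energy/Gr\"onwall argument, absorption of interface terms into the $\tau$, $\beta$, penalty and damping dissipation, and a quadratic treatment of the nonlinear damping terms) is the same as the paper's, but your choice of projections creates a genuine gap. The paper does \emph{not} use the $L^2$ projection for $u$: it defines $\tilde u_h$ by a Ritz-type projection, $\int_{I_j}(u-\tilde u_h)_x\phi_x\,dx=0$ for all $\phi\in V_h^p$ together with matching cell averages, and uses $\mathbb{P}^q$ only for $v$. This matters in two places. First, with the Ritz projection the cross term $\int_{I_j}(\delta_u)_x(\tilde e_v)_x\,dx$ vanishes identically (using $q\le p$), with no integration by parts and hence no boundary residue; with your $\eta_u=u-\mathbb{P}^pu$ you must integrate by parts, and the resulting interface terms $[\![\eta_u(\xi_v)_x]\!]=[\![\eta_u]\!]\left\{(\xi_v)_x\right\}+\left\{\eta_u\right\}[\![(\xi_v)_x]\!]$ involve one-sided traces and jumps of $(\xi_v)_x$, which none of the scheme's dissipation mechanisms control ($\beta$ controls $[\![\xi_v]\!]$, $\tau$ controls $[\![(\xi_u)_x]\!]$, the penalty controls $[\![\xi_u]\!]$). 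They can only be handled by trace-plus-inverse inequalities, at cost $h^{p+1/2}\cdot h^{-3/2}\|\xi_v\|_{L^2(I_j)}=h^{p-1}\|\xi_v\|_{L^2(I_j)}$, which caps the $u$-contribution at rate $p-1$ regardless of $\tau,\beta$. Second, the Ritz projection commutes with $\partial_t$, so $\int_{I_j}((\delta_u)_t)_x(\tilde e_u)_x\,dx=0$; in your splitting the mismatch $(\eta_u)_t-\eta_v=\mathbb{P}^qv-\mathbb{P}^pv$ survives, and whether you treat it in the volume (inverse inequality, size $h^{q}|v|_{H^{q+1}}$) or integrate by parts (boundary residues involving $\left\{(\xi_u)_x\right\}$), it caps the $v$-contribution at rate $q$. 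Consequently your plan proves the rate $\min(p-1,q)$, i.e.\ the $\tau=\beta=0$ case, but the improved half-orders claimed for $\tau>0$ or $\beta>0$ are unreachable with $L^2$ projections; the Ritz projection is precisely the device that reduces $B(\Delta_h,\tilde D_h)$ to the pure flux--jump pairings $\widehat{\delta_v}[\![(\tilde e_u)_x]\!]+\widehat{(\delta_u)_x}[\![\tilde e_v]\!]$, which are the only terms the $\tau$/$\beta$ dissipation can absorb.

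Two smaller points. Your accounting of which parameter buys which half-order does not match the absorption mechanism: the $\tau$-dissipation on $[\![(\xi_u)_x]\!]$ absorbs the term whose coefficient is $\widehat{\delta_v}$ (dominant size $h^{q+1/2}|v|_{H^{q+1}}$), while the $\beta$-dissipation absorbs the term with coefficient $\widehat{(\delta_u)_x}$ (size $h^{p-1/2}|u|_{H^{p+1}}$); so in the mixed cases the improved exponent attaches to the opposite variable from what you assert (the paper's own Cases 2--3 exhibit exactly this crossing relative to the labeling of $p'$ and $q'$ in the theorem). Finally, the bootstrap/continuity argument you invoke for the damping coefficients is unnecessary: as in the paper, $\sigma_j^l$ is bounded by jumps of the discrete error (tamed by the inverse inequality) plus jumps of the projection error, and carrying both through Cauchy--Schwarz closes the estimate quadratically, e.g.\ $D_u\lesssim\|(\xi_u)_x\|_{L^2(\Omega)}^2+h^{2p+2}|u|^2_{H^{p+1}(\Omega)}$, with no smallness assumption on the discrete error.
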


\begin{proof}
We denote $V^{p,q}=V_h^p\times V_h^q$ to be the space of numerical solution $U_h = (u_h,v_h)$,  
$U = (u,v)$ is the exact solution, and $\Phi = (\phi^{(u)},\phi^{(v)})\in V^{p,q}$ is the test function. We introduce the notations
\begin{equation*}
\begin{aligned}
    B_j(U_h, \Phi) =& \int_{I_j} \left( ((u_h)_t - v_h)_x\phi^{(u)}_x + (v_h)_t\phi^{(v)} + (u_h)_x\phi^{(v)}_x \right) dx  \\
    &- \left(\widehat{v}_{j+1/2} - v_h|^{-}_{j+1/2} \right) \phi_x^{(u)}|^{-}_{j+1/2} +\left( \widehat{v}_{j-1/2} - v_h|^{+}_{j-1/2} \right) \phi_x^{(u)}|^{+}_{j-1/2}  \\
    &- \widehat{u_x}|_{j+1/2}\phi^{(v)}|^{-}_{j+1/2} + 
    \widehat{u_x}|_{j-1/2} \phi^{(v)}|^{+}_{j-1/2}, \\
    B(U_h,\Phi) =& \sum_{j} B_j(U_h,\Phi).
\end{aligned}
\end{equation*}
We also define 
\begin{equation*}
\begin{aligned}
    D_{u}(u_h, \phi^{(u)}) = \sum_j D_{u,j}(u_h, \phi^{(u)}), \quad 
    D_{u,j}(u_h, \phi^{(u)}) = & \sum_{l=1}^{p} \frac{\sigma_j^l}{h_j} \int_{I_j} \left((u_h)_x - \mathbb{P}^{l-1}(u_h)_x\right)\phi^{(u)}_x dx,\\
     D_{v}(v_h, \phi^{(v)}) = \sum_j D_{v,j}(v_h, \phi^{(v)}), \quad 
     D_{v,j}(v_h, \phi^{(v)}) = & \sum_{l=0}^{q} \frac{\tilde{\sigma}_j^l}{h_j} \int_{I_j} \left(v_h - \mathbb{P}^{l-1}v_h \right) \phi^{(v)} dx,\\
     P_{u}(u_h,\phi^{(u)}) = \sum_j P_{u,j}(u_h,\phi^{(u)}), \quad 
     P_{u,j}(u_h,\phi^{(u)})=&- \dfrac{c}{h^2}([\![u_h]\!]|_{j+1/2}\phi^{(u)}|_{j+1/2}^--[\![u_h]\!]|_{j-1/2}\phi^{(u)}|_{j-1/2}^+).
\end{aligned}
\end{equation*}
Thus, for any $\Phi\in V^{p,q}$, the exact solution $U$ satisfies $B_j(U,\Phi)=0$, and the numerical solution $U_h$ satisfies
\begin{equation*}  
    B_j(U_h,\Phi) +D_{u,j}(u_h,\phi^{(u)}) +D_{v,j}(v_h,\phi^{(v)}) +P_{u,j}(u_h,\phi^{(u)})=0,\quad \forall \,\,\Phi\in V^{p,q}.
\end{equation*} 
Let the error between the exact and numerical solution be 
\begin{align*}
    e_u = u-u_h,\quad e_v=v-v_h,\quad D_h =(e_u,e_v),
\end{align*}
We have the error equation,
\begin{equation}\label{eq:error_eqn}            
    B_j(D_h,\Phi) = D_{u,j}(u_h,\phi^{(u)}) +D_{v,j}(v_h,\phi^{(v)}) +P_{u,j}(u_h,\phi^{(u)}).
\end{equation}
Next, we define the differences
\begin{equation*}
\begin{aligned}
    & \tilde{e}_u=\tilde{u}_h-u_h,\quad \tilde{e}_v=\tilde{v}_h-v_h,\quad \tilde{D}_h=(\tilde{e}_u,\tilde{e}_v)\in V^{p,q},\\
    & \delta_u=\tilde{u}_h-u,\quad\,\,\, \delta_v = \tilde{v}_h-v,\quad\,\,\, \Delta_h = (\delta_u,\delta_v),
\end{aligned}
\end{equation*}
where $\tilde{v}_h\in V_h^q$ is the $L^2$ projection of $v$, i.e., $\tilde{v}_h=\mathbb{P}^q v$,
and $\tilde{u}_h\in V^p_h$ is obtained via a Ritz-type projection 
\begin{equation*}
\begin{cases}
    \displaystyle
    \int_{I_j} (u-\tilde{u}_h)_x \phi^{(u)}_x dx = 0 , \quad \forall \,\phi^{(u)}\in V^{p}_h,\\
    \displaystyle
    \int_{I_j} (u - \tilde{u}_h)dx = 0.
\end{cases}
\end{equation*}
Then, for $u\in H^{p+1}(\Omega)$ and $v\in H^{q+1}(\Omega)$, we have the basic results following from the Bramble-Hilbert lemma \cite{ciarlet2002finite}  
\begin{equation}\label{eq:prop_add1}
\begin{aligned}
    \Vert\delta_u\Vert_{L^2(I_j)} \lesssim h^{p+1}|u|_{H^{p+1}(I_j)},\quad
    \Vert \delta_v\Vert_{L^2(I_j)}\lesssim h^{q+1}|v|_{H^{q+1}(I_j)}.
\end{aligned}
\end{equation}
More generally, we have the following properties \cite{ciarlet2002finite}
\begin{equation}\label{Ritz_Projection}
    \begin{aligned}
        \sum_j \left( h^l\Vert\partial_x^l\delta_u\Vert_{L^2(I_j)} + h^{l+1/2}\Vert\partial_x^l\delta_u\Vert_{L^2(\partial I_j)} \right) \lesssim h^{p+1}|u|_{H^{p+1}(\Omega)},\quad l = 0,\ldots,p,\\
        \sum_j \left( h^l\Vert\partial_x^l\delta_v\Vert_{L^2(I_j)} + h^{l+1/2}\Vert\partial_x^l\delta_v\Vert_{L^2(\partial I_j)}\right) \lesssim h^{q+1}|v|_{H^{q+1}(\Omega)},\quad l = 0,\ldots,q,
    \end{aligned}
\end{equation}
and 
\begin{equation}\label{eq:error_flux}
    \begin{aligned}
        \sum_j \left(\Vert\widehat{(\delta_u)_x}\Vert_{L^2(\partial I_j)} + \Vert\widehat{\delta_v}\Vert_{L^2(\partial I_j)}\right) \lesssim h^{p-1/2}|u|_{H^{p+1}(\Omega)} + h^{q+1/2}|v|_{H^{q+1}(\Omega)}.
    \end{aligned}
\end{equation}
In addition, we also need inverse inequalities, for any $w\in V_h^k$, there exists a positive constant independent of $w$ and $h$ such that
\begin{equation}\label{eq:inverse}
    \Vert w_x\Vert_{L^2(I_j)}\lesssim h^{-1}\Vert w\Vert_{L^2(I_j)},\quad 
    \Vert w \Vert_{\partial I_j}\lesssim h^{-1/2}\Vert w\Vert_{L^2(I_j)}, 
    \quad \, \forall \, w(x)\in V_h^k,
\end{equation}
Clearly, we have $D_h = \tilde{D}_h-\Delta_h$, thus the error equation \eqref{eq:error_eqn} can be rewritten as
\begin{equation*}
B_j(\tilde{D}_h, \Phi) = B_j(\Delta_h, \Phi) +D_{u,j}(u_h,\phi^{(u)}) +D_{v,j}(v_h,\phi^{(v)}) +P_{u,j}(u_h,\phi^{(u)}).
\end{equation*}
Now, we choose $\Phi = \tilde{D}_h$ and sum over $j$, and obtain
\begin{equation}\label{eqn:err_rhs}
\begin{aligned}
    \dfrac{d}{dt} \dfrac{1}{2}\sum_j\int_{I_j}\left( (\tilde{e}_u)_x^2 + (\tilde{e}_v)^2 \right) dx
    =&B(\Delta_h,\tilde{D}_h) -\sum_j \left( \beta[\![\tilde{e}_v]\!]_{j+1/2}^2+\tau [\![(\tilde{e}_u)_x]\!]_{j+1/2}^2 
     \right)\\
    &+D_u(u_h,\tilde{e}_u) +D_v(v_h,\tilde{e}_v) +P_u(u_h,\tilde{e}_u).
\end{aligned}
\end{equation}

\noindent
In the following, we derive estimates of the right-hand side term by term.  
For notation clarity, we introduce 
\begin{equation*}
    S(\tilde{e}_u,\tilde{e}_v) = \sum_j \left( \beta[\![\tilde{e}_v]\!]_{j+1/2}^2+\tau [\![(\tilde{e}_u)_x]\!]_{j+1/2}^2 
     \right).
\end{equation*}

The first two terms on the right-hand side of \eqref{eqn:err_rhs} can be estimated by following the error analysis in \cite{appelo2015new}. To start, we use the definitions of $\tilde{u}_h$ and $\tilde{v}_h$ and integration by parts to obtain
\begin{equation*}
    \begin{aligned}
    B(\Delta_h, \tilde{D}_h) 
    =&\sum_j\int_{I_j}\left( ((\delta_u)_t - \delta_v)_x(\tilde{e}_u)_x + (\delta_v)_t\tilde{e}_v + (\delta_u)_x(\tilde{e}_v)_x \right) dx\\
    & + \sum_j \left( \widehat{\delta_v}|_{j+1/2} [\![(\tilde{e}_u)_x]\!]_{j+1/2}- [\![\delta_v (\tilde{e}_u)_x]\!]_{j+1/2} + \widehat{(\delta_u)_x}|_{j+1/2} [\![\tilde{e}_v]\!]_{j+1/2} \right) \\
    = &\sum_j\int_{I_j} \left( -(\delta_v)_x(\tilde{e}_u)_x + (\delta_u)_x(\tilde{e}_v)_x \right)dx  \\
    & + \sum_j \left( \widehat{\delta_v}|_{j+1/2} [\![(\tilde{e}_u)_x]\!]_{j+1/2}- [\![\delta_v (\tilde{e}_u)_x]\!]_{j+1/2} + \widehat{(\delta_u)_x}|_{j+1/2} [\![\tilde{e}_v]\!]_{j+1/2} \right).\\
    \end{aligned}
\end{equation*}
Since $p-2\leq q\leq p$, then $\tilde{e}_{xx}\in V^q_h$ and $\tilde{e}_v\in V_h^p$, we get
\begin{equation*}
    \begin{aligned}
    B(\Delta_h, \tilde{D}_h)=&\sum_j\int_{I_j} \left( \delta_v(\tilde{e}_u)_{xx} + (\delta_u)_x(\tilde{e}_v)_x \right) dx \\
    & +    \sum_j \left( \widehat{\delta_v}|_{j+1/2}[\![(\tilde{e}_u)_x]\!]_{j+1/2} 
    +\widehat{(\delta_u)_x}|_{j+1/2} [\![\tilde{e}_v]\!]_{j+1/2} \right) \\
    = &\sum_j \left( \widehat{\delta_v}|_{j+1/2}[\![(\tilde{e}_u)_x]\!]_{j+1/2} 
    +\widehat{(\delta_u)_x}|_{j+1/2} [\![\tilde{e}_v]\!]_{j+1/2} \right).
    \end{aligned}
\end{equation*}

Now for the term $B(\Delta_h,\tilde{D}_h) - S(\tilde{e}_u,\tilde{e}_v)$, we  consider the following four cases:\\
Case 1: $\tau = 0$, $\beta = 0$,\\
\begin{equation*}\label{case1}
    \begin{aligned}
        B(\Delta_h,\tilde{D}_h) - S(\tilde{e}_u,\tilde{e}_v)
        \lesssim& \sum_j \left( \Vert\widehat{\delta_v}\Vert_{L^2(\partial I_j)}\Vert(\tilde{e}_u)_x\Vert_{L^2(\partial I_j)} + \Vert\widehat{(\delta_u)_x}\Vert_{L^2(\partial I_j)}\Vert\tilde{e}_v\Vert_{L^2(\partial I_j)} \right)  \\
        \lesssim& \sum_j \left( h^{-1/2}\Vert\widehat{\delta_v}\Vert_{L^2(\partial I_j)}\Vert(\tilde{e}_u)_x\Vert_{L^2(I_j)} + h^{-1/2}\Vert\widehat{(\delta_u)_x}\Vert_{L^2(\partial I_j)}\Vert\tilde{e}_v\Vert_{L^2(I_j)} \right)  \\
        \lesssim&
        \sum_j\left( h^{-1}\Vert\widehat{\delta_v}\Vert^2_{L^2(\partial I_j)} + \Vert(\tilde{e}_u)_x)\Vert^2_{L^2(I_j)} + h^{-1}\Vert\widehat{(\delta_u)_x}\Vert^2_{L^2(\partial I_j)} + \Vert\tilde{e}_v\Vert_{L^2(I_j)}^2 \right) \\
        \lesssim&  \Vert(\tilde{e}_u)_x)\Vert^2_{L^2(\Omega)} + \Vert\tilde{e}_v\Vert_{L^2(\Omega)}^2  + h^{2(p-1)}|u|^2_{H^{p+1}(\Omega)} + h^{2q}|v|_{H^{q+1}(\Omega)}^2 .
    \end{aligned}
\end{equation*}

\noindent
Case 2: $\tau > 0$, $\beta = 0$, there exists a constant $\epsilon>0$ such that $\dfrac{\epsilon}{2}\leq \tau$, then we have,
\begin{equation*}\label{case2}
    \begin{aligned}
        B(\Delta_h,\tilde{D}_h) - S(\tilde{e}_u,\tilde{e}_v)
     =&\sum_j \widehat{(\delta_u)_x}|_{j+1/2} [\![\tilde{e}_v]\!]_{j+1/2}+\widehat{\delta_v}|_{j+1/2} [\![(\tilde{e}_u)_x]\!]_{j+1/2}-\tau [\![(\tilde{e}_u)_x]\!]_{j+1/2}^2\\ 
     \lesssim& \sum_j \Vert\widehat{(\delta_u)_x}\Vert_{L^2(\partial I_j)}\Vert\tilde{e}_v\Vert_{L^2(\partial I_j)} + \dfrac{1}{2\epsilon} \Vert\widehat{\delta_v}\Vert^2_{L^2(\partial I_j)} + \frac{\epsilon}{2} [\![(\tilde{e}_u)_x]\!]_{j+1/2}^2-\tau [\![(\tilde{e}_u)_x]\!]_{j+1/2}^2   \\
     \lesssim& \sum_j h^{-1/2}\Vert\widehat{(\delta_u)_x}\Vert_{L^2(\partial I_j)}\Vert\tilde{e}_v\Vert_{L^2(I_j)} + \dfrac{1}{2\epsilon}\Vert\widehat{\delta_v}\Vert^2_{L^2(\partial I_j)}\\
     \lesssim& \Vert\tilde{e}_v\Vert^2_{L^2(\Omega)} + \sum_j\left( h^{-1}\Vert\widehat{(\delta_u)_x}\Vert_{L^2(\partial I_j)}^2 + \dfrac{1}{\epsilon}\Vert\widehat{\delta_v}\Vert^2_{L^2(\partial I_j)}\right) \\
     \lesssim& \Vert\tilde{e}_v\Vert^2_{L^2(\Omega)} + h^{2(p-1)}|u|^2_{H^{p+1}(\Omega)} + h^{2q+1}|v|^2_{H^{q+1}(\Omega)}. 
    \end{aligned}
\end{equation*}

\noindent
Case 3: $\tau = 0$, $\beta > 0$, there exists a constant $\epsilon>0$ such that $\dfrac{\epsilon}{2}\leq \beta$,
\begin{equation*}\label{case3}
    \begin{aligned}
        B(\Delta_h,\tilde{D}_h) - S(\tilde{e}_u,\tilde{e}_v)
     =&\sum_j \widehat{(\delta_u)_x}|_{j+1/2} [\![\tilde{e}_v]\!]_{j+1/2}+\widehat{\delta_v}|_{j+1/2} [\![(\tilde{e}_u)_x]\!]_{j+1/2}-\beta[\![\tilde{e}_v]\!]_{j+1/2}^2\\ 
     \lesssim&\sum_j \Vert\widehat{\delta_v} \Vert_{L^2(\partial I_j)}\Vert(\tilde{e}_u)_x\Vert_{L^2(\partial I_j)} 
     + \dfrac{1}{2\epsilon} \Vert\widehat{(\delta_u)_x}\Vert^2_{L^2(\partial I_j)} 
     + \frac{\epsilon}{2} [\![\tilde{e}_v]\!]_{j+1/2}^2 - \beta[\![\tilde{e}_v]\!]_{j+1/2}^2 \\
     \lesssim& \sum_j h^{-1/2}\Vert\widehat{\delta_v} \Vert_{L^2(\partial I_j)}\Vert(\tilde{e}_u)_x\Vert_{L^2(I_j)} + \dfrac{1}{2\epsilon} \Vert\widehat{(\delta_u)_x}\Vert^2_{L^2(\partial I_j)}\\
     \lesssim& \Vert(\tilde{e}_u)_x\Vert_{L^2(\Omega)}^2 + \sum_j\left(h^{-1}\Vert\widehat{\delta_v} \Vert_{L^2(\partial I_j)}^2 + \dfrac{1}{\epsilon} \Vert\widehat{(\delta_u)_x}\Vert^2_{L^2(\partial I_j)}\right)\\
     \lesssim & \Vert(\tilde{e}_u)_x\Vert_{L^2(\Omega)}^2 + h^{2p-1}|u|^2_{H^{p+1}(\Omega)} + h^{2q}|v|^2_{H^{q+1}(\Omega)}.
    \end{aligned}
\end{equation*}

\noindent
Case 4: $\tau > 0$, $\beta > 0$, there exists a constant $\epsilon>0$ such that $\dfrac{\epsilon}{2}\leq \min(\tau,\beta)$,
\begin{equation*}\label{case4}
    \begin{aligned}
        B(\Delta_h,\tilde{D}_h) - S(\tilde{e}_u,\tilde{e}_v)
     =& \sum_j \left( \widehat{(\delta_u)_x}|_{j+1/2} [\![\tilde{e}_v]\!]_{j+1/2}+\widehat{\delta_v}|_{j+1/2} [\![(\tilde{e}_u)_x]\!]_{j+1/2}
     -\beta[\![\tilde{e}_v]\!]_{j+1/2}^2 
     -\tau [\![(\tilde{e}_u)_x]\!]_{j+1/2}^2 
     \right)\\
     \lesssim& \sum_j \dfrac{1}{2\epsilon}\left(\Vert\widehat{(\delta_u)_x}\Vert^2_{L^2(\partial I_j)} + \Vert\widehat{\delta_v}\Vert^2_{L^2(\partial I_j)}\right) + (\dfrac{\epsilon}{2}  - \tau)[\![(\tilde{e}_u)_x]\!]_{j+1/2}^2 + (\dfrac{\epsilon}{2} - \beta)[\![\tilde{e}_v]\!]^2_{j+1/2}\\
     \lesssim& \sum_j \dfrac{1}{\epsilon}\left(\Vert\widehat{(\delta_u)_x}\Vert^2_{L^2(\partial I_j)} + \Vert\widehat{\delta_v}\Vert^2_{L^2(\partial I_j)}\right)\\
     \lesssim&  h^{(2p-1)}|u|^2_{H^{p+1}(\Omega)} + h^{2q+1}|v|^2_{H^{q+1}(\Omega)}.
    \end{aligned}
\end{equation*}
In summary, we have the following result for above four cases,
\begin{equation}\label{case_all}
\begin{aligned}
    B(\Delta_h,\tilde{D}_h) - S(\tilde{e}_u,\tilde{e}_v)\lesssim\left(\Vert(\tilde{e}_u)_x)\Vert^2_{L^2(\Omega)} + \Vert\tilde{e}_v\Vert_{L^2(\Omega)}^2\right) + h^{2p^{\prime}}|u|^2_{H^{p+1}(\Omega)} + h^{2q^\prime}|v|_{H^{q+1}(\Omega)}^2 , 
\end{aligned}
\end{equation}
where $p', q'$ are given as 
    \begin{equation*}
    p' = 
    \begin{cases} 
    p-1, & \tau=0 , \\
    p-1/2, &\tau> 0,
    \end{cases} \qquad
    q' = 
    \begin{cases} 
    q, & \beta=0, \\
    q+1/2, &\beta> 0.
    \end{cases}
    \end{equation*}
For the damping term $D_u(u_h,\tilde{e}_u)$, according to the properties of $L^2$ projection, we can obtain,
\begin{equation}\label{eq:Du}
    \begin{aligned}
        D_u(u_h,\tilde{e}_u)=& \sum_j\sum_{l=1}^{p} \frac{\sigma_j^l}{h_j} \int_{I_j} \left((u_h)_x - \mathbb{P}^{l-1}(u_h)_x\right)(\tilde{e}_u)_x dx\\
        =&-\sum_j\sum_{l=1}^p\dfrac{\sigma_j^l}{h_j}\int_{I_j}((\tilde{e}_u)_x-\mathbb{P}^{l-1}(\tilde{e}_u)_x)(\tilde{e}_u)_x dx +\sum_j\sum_{l=1}^p\dfrac{\sigma_j^l}{h_j}\int_{I_j}((\tilde{u}_h)_x-\mathbb{P}^{l-1}(\tilde{u}_h)_x)(\tilde{e}_u)_x dx\\
        =&-\sum_j\sum_{l=1}^p\dfrac{\sigma_j^l}{h_j}\int_{I_j}((\tilde{e}_u)_x-\mathbb{P}^{l-1}(\tilde{e}_u)_x)^2 dx +\sum_j\sum_{l=1}^p\dfrac{\sigma_j^l}{h_j}\int_{I_j}((\tilde{u}_h)_x-\mathbb{P}^{l-1}(\tilde{u}_h)_x)(\tilde{e}_u)_x dx\\
        \leq&\sum_j\sum_{l=1}^p\dfrac{\sigma_j^l}{h_j}\int_{I_j}((\tilde{u}_h)_x-\mathbb{P}^{l-1}(\tilde{u}_h)_x)(\tilde{e}_u)_x dx\\
        \leq &\sum_j\sum_{l=1}^p\dfrac{\sigma_j^l}{h_j}\Vert(\tilde{u}_h)_x-\mathbb{P}^{l-1}(\tilde{u}_h)_x\Vert_{L^2(I_j)}\Vert(\tilde{e}_u)_x\Vert_{L^2(I_j)}.
    \end{aligned}
\end{equation}
By the properties of the $L^2$ projection, and since $u\in H^{p+1}(\Omega)$, we have
\begin{equation}\label{est:Pl}
    \begin{aligned}
        &\Vert(\tilde{u}_h)_x-\mathbb{P}^{l-1}(\tilde{u}_h)_x\Vert_{L^2(I_j)}\\
        &\leq\Vert(\tilde{u}_h-u)_x\Vert_{L^2(I_j)}+\Vert (u_x-\mathbb{P}^{l-1}u_x)\Vert_{L^2(I_j)}+\Vert(\mathbb{P}^{l-1}(u_x-(\tilde{u}_h)_x))\Vert_{L^2(I_j)}\\
        &\lesssim h_j^p|u|_{H^{p+1}(I_j)} + h_j^{\max{(1,l)}}|u_x|_{H^{\max(1,l)}(I_j)}+ h_j^p|u|_{H^{p+1}(I_j)} \\
        &\lesssim h_j^{\max{(1,l)}}|u|_{H^{\max(1,l)+1}(I_j)} \\
        &\lesssim h_j^{\max{(1,l)}}\left(\int_{I_j} |\partial^{\max{(1,l)+1}}u|^2dx\right)^{1/2}  \\
        &\lesssim h_j^{\max{(1,l)}+1/2}|\partial^{\max{(1,l)+1}}u|_{\infty}\\
        &\lesssim {h_j^{\max(1,l)+1/2}}, \qquad l=0,1,\ldots,p.
    \end{aligned}
\end{equation}
With the definition of $\sigma_j^l$, we have
\begin{equation}\label{est:sigma}
    \begin{aligned}
        (\sigma_j^l)^2=&\dfrac{4(2l+1)^2}{(2p-1)^2}\dfrac{h_j^{2l}}{(l!)^2}([\![\partial_x^l(u_h-u)]\!]_{j-1/2}^2+[\![\partial_x^l(u_h-u)]\!]_{j+1/2}^2)\\
        \lesssim & h_j^{2l}([\![\partial_x^l\tilde{e}_u]\!]_{j-1/2}^2+[\![\partial_x^l\tilde{e}_u]\!]_{j+1/2}^2)+h_j^{2l}([\![\partial_x^l\delta_u]\!]_{j-1/2}^2+[\![\partial_x^l\delta_u]\!]_{j+1/2}^2)
    \end{aligned}
\end{equation}
For the jump terms, we use the inverse inequality \eqref{eq:inverse} and the approximation property of projection \eqref{Ritz_Projection},
\begin{equation}\label{est:jump1}
    \begin{aligned}
        \sum_j\sum_{l=1}^ph_j^{2l}[\![\partial_x^l\tilde{e}_u]\!]_{j+1/2}^2\lesssim &
        \sum_j\sum_{l=1}^ph_j^{2l}(((\partial_x^l\tilde{e}_u)^+_{j+1/2})^2+((\partial_x^l\tilde{e}_u)_{j-1/2}^-)^2)\\
        \lesssim &h_j^{-1} \sum_j \sum_{l=1}^ph_j^{2l}\Vert\partial_x^l\tilde{e}_u\Vert^2_{L^2(I_j)}\\
        \lesssim &h_j\Vert(\tilde{e}_u)_x\Vert^2_{L^2(\Omega)}.
    \end{aligned}
\end{equation}
\begin{equation}\label{est:jump2}
    \begin{aligned}
        \sum_j\sum_{l=1}^ph_j^{2l}[\![\partial_x^l\delta_u]\!]_{j+1/2}^2\lesssim &
        \sum_j\sum_{l=1}^ph_j^{2l}(((\partial_x^l\delta_u)^+_{j+1/2})^2+((\partial_x^l\delta_u)_{j-1/2}^-)^2)\\
        \lesssim &h_j^{2p+1}|u|_{H^{p+1}(\Omega)}. 
    \end{aligned}
\end{equation}
Using estimates \eqref{est:Pl} - \eqref{est:jump2} in \eqref{eq:Du}, we obtain
\begin{equation}\label{Damping_u_error}
    \begin{aligned}
        D_u(u_h,\tilde{e}_u) 
        \lesssim&\dfrac{1}{h}\sum_j\sum_{l=1}^ph^{\max(l,1)+1/2+l}([\![\partial_x^l\tilde{e}_u]\!]_{j-1/2}^2+[\![\partial_x^l\tilde{e}_u]\!]_{j+1/2}^2)^{1/2}\Vert(\tilde{e}_u)_x\Vert_{L^2(I_j)}\\
        +&\dfrac{1}{h}\sum_j\sum_{l=1}^ph^{\max(l,1)+1/2+l}([\![\partial_x^l\delta_u]\!]_{j-1/2}^2+[\![\partial_x^l\delta_u]\!]_{j+1/2}^2)^{1/2}\Vert(\tilde{e}_u)_x\Vert_{L^2(I_j)}\\
        \lesssim& \left(\left(\sum_j\sum_{l=1}^ph^{2l}[\![\partial_x^l\tilde{e}_u]\!]_{j+1/2}^2\right)^{1/2} + \left(\sum_j\sum_{l=1}^ph^{2l}[\![\partial_x^l\delta_u]\!]_{j+1/2}^2\right)^{1/2}\right)h^{1/2}\Vert(\tilde{e}_u)_x\Vert_{L^2(\Omega)} \\
        \lesssim&h{\Vert(\tilde{e}_u)_x\Vert_{L^2(\Omega)}^2+h^{p+1}| u|_{H^{p+1}(\Omega)}\Vert(\tilde{e}_u)_x\Vert}_{L^2(\Omega)}\\
        \lesssim &{\Vert(\tilde{e}_u)_x\Vert_{L^2(\Omega)}^2+h^{2p+2}| u|^2_{H^{p+1}(\Omega)}}.
    \end{aligned}
\end{equation}
For the term $D_v(v_h,\tilde{e}_v)$,
in the same way, we have,
\begin{equation}\label{Damping_v_error}
    \begin{aligned}
        D_v(v_h,\tilde{e}_v)\leq &\sum_j\sum_{l=1}^q \dfrac{\tilde{\sigma}_j^l}{h_j} \Vert\tilde{v}_h-\mathbb{P}^{l-1}\tilde{v}_h\Vert_{L^2(I_j)} \Vert\tilde{e}_v\Vert_{L^2(I_j)}\\
        \lesssim&{\Vert\tilde{e}_v\Vert^2+h^{2q+2}|v|_{H^{q+1}(\Omega)}}.
    \end{aligned}
\end{equation}
Finally, we consider the term $P_u(u_h,\tilde{e}_u)$,
\begin{equation*}
    \begin{aligned}
        P_u(u_h,\tilde{e}_u) = &-\sum_j\dfrac{c}{h^2}([\![u_h]\!]_{j+1/2}\tilde{e}_u|_{j+1/2}^--[\![u_h]\!]_{j-1/2}\tilde{e}_u|_{j-1/2}^+)\\
        =&-\sum_j\dfrac{c}{h^2}([\![u_h-u]\!]_{j+1/2}\tilde{e}_u|_{j+1/2}^--[\![u_h-u]\!]_{j-1/2}\tilde{e}_u|_{j-1/2}^+)\\
        =&\sum_j\dfrac{c}{h^2}\left( [\![\tilde{e}_u-\delta_u]\!]_{j+1/2}\tilde{e}_u|_{j+1/2}^--[\![\tilde{e}_u-\delta_u]\!]_{j-1/2}\tilde{e}_u|_{j-1/2}^+ \right)\\
        =&\sum_j\dfrac{c}{h^2}\left( -[\![\tilde{e}_u]\!]_{j+1/2}^2 + [\![\delta_u]\!]_{j+1/2}[\![\tilde{e}_u]\!]_{j+1/2}\right).
    \end{aligned}
\end{equation*}
By Cauchy-Schwarz inequality, and the property of the Ritz projection \eqref{Ritz_Projection}, we can obtain, 
\begin{equation}\label{Penalty_u}
    \begin{aligned}
        P_u(u_h,\tilde{e}_u)\lesssim&\sum_j\dfrac{c}{h^2}\left(-[\![\tilde{e}_u]\!]_{j+1/2}^2 + \Vert\delta_u\Vert_{L^2(\partial I_j)}[\![\tilde{e}_u]\!]_{j+1/2}\right)\\
        \lesssim& \sum_j\dfrac{c}{h^2}\left(-[\![\tilde{e}_u]\!]_{j+1/2}^2 + \dfrac{1}{4} \Vert\delta_u\Vert_{L^2(\partial I_j)}^2 + [\![\tilde{e}_u]\!]_{j+1/2}^2\right)\\
        \lesssim& \sum_j \dfrac{c}{4h^2}\Vert\delta_u\Vert_{L^2(\partial I_j)}^2 \\
        \lesssim& h^{2p-1}|u|^2_{H^{p+1}(\Omega)}.
    \end{aligned}
\end{equation}
Therefore, coupling inequalities 
\eqref{case_all}, \eqref{Damping_u_error}-\eqref{Penalty_u}, we can obtain the following: 
\begin{equation*}
    \begin{aligned}
    \dfrac{1}{2} \dfrac{d}{dt} \sum_j\int_{I_j}\left( (\tilde{e}_u)_x^2 + (\tilde{e}_v)^2 \right) dx
    \lesssim &\sum_j\int_{I_j}\left( (\tilde{e}_u)_x^2 + (\tilde{e}_v)^2 \right) dx + h^{2\gamma} \left( |u|^2_{H^{p+1}(\Omega)} + |v|^2_{H^{q+1}(\Omega)} \right),
    \end{aligned}
\end{equation*}
where $\gamma=\min(p', q')$. Then by using Gronwall's inequality, we can obtain
\begin{equation*}
    \sum_j\int_{I_j}\left( (\tilde{e}_u)_x^2 + (\tilde{e}_v)^2 \right) dx
    \lesssim h^{2\gamma} \left( |u|^2_{H^{p+1}(\Omega)} + |v|^2_{H^{q+1}(\Omega)} \right).
\end{equation*}
The proof is completed by using the triangle inequality. 
\end{proof}

\begin{remark}
In the literature, the convergence rate  $p+1$ in $L^2$ norm for $u_h$ is typically considered optimal. Based on Theorem \ref{Thm_err}, we may expect a suboptimal convergence rate $p+1/2$ for S-flux and $p$ for A-flux and C-flux. However, our numerical examples show that both S-flux and A-flux achieve the optimal convergence rate. For C-flux, we have observed optimal convergence rate for odd $p$, and suboptimal convergence rate $p$ for even $p$. These observations are consistent with previously reported numerical results of EDG  \cite{appelo2015new, appelo2018energy, appelo2020energy}.
\end{remark}

\section{Multi-dimensional problems}\label{sec:multiD}
In this section, we extend the one-dimensional DG scheme to multi-dimensional problems. The stability analysis and a priori error estimate are also provided.

\subsection{The OF-EDG scheme in 2D}
We consider the model problem in a bounded domain $\mathbf{x}=(x,y)\in\Omega$ in $\mathbb{R}^2$,
\begin{equation}\label{eq:wave_2D}
    u_{tt} = \Delta u, \quad (\mathbf{x},t)\in \Omega\times(0,T ],
\end{equation}
with initial conditions
$$u(\mathbf{x},0) = u_0(\mathbf{x}),\qquad u_t(\mathbf{x},0) = u_1(\mathbf{x}).$$
We again consider periodic boundary conditions in each direction, and introduce the time derivative as a new variable $v = \dfrac{\partial u}{\partial t}$. The equation is rewritten as 
\begin{equation}
    \begin{cases}
        u_t = v,\\
        v_t = \Delta u,\\
    \end{cases} \quad (\mathbf{x},t)\in \Omega\times(0,T].
\end{equation}

For simplicity, we only present the formula for two-dimensional problems on a rectangular domain. We define   
$$K_{ij} = [x_{i-1/2},x_{i+1/2}]\times[y_{j-1/2},y_{j+1/2}],\qquad i = 1,\cdots,N_x, \, j = 1,\cdots,N_y,$$
with
$$h^x_i = x_{i+1/2} - x_{i-1/2},\quad 
h^y_j = y_{j+1/2} - y_{j-1/2}, $$
$$ h_{ij} = \sqrt{(h_i^x)^2 + (h_j^y)^2}, \quad h=\max h_{ij}.$$

Thus, the partition $\Gamma_h$ of $\Omega$ is 
\begin{equation}
\Gamma_h = \{K_{ij}\in\Omega,\, i=1, \ldots, N_x, j=1, \ldots, N_y\}.
\end{equation}
{We also define $\Gamma_e$ as the collection of the edges
\begin{equation}
\Gamma_e = \{e\in \partial K_{ij}, \,i=1, \ldots, N_x, j=1, \ldots, N_y \},
\end{equation}}
and define the discontinuous finite element space as 
\begin{equation}
V_h^k(\Omega) = \{w\in L^2(\Omega):w|_{K_{ij}}\in P^k(K_{ij}),\forall K_{ij}\in \Gamma_h \}.
\end{equation}

Following the idea in one dimension, we state the OF-EDG scheme in 2D as follows: find $u_h\in V^p_h(\Omega)$ and $v_h\in V^q_h(\Omega)$, such that for any $\phi^{(u)}(\mathbf{x})\in V^p_h(\Omega)$, $\phi^{(v)}(\mathbf{x})\in V^q_h(\Omega)$, $K_{ij}\in\Gamma_h$, 
\begin{equation}\label{eq:DG_2D}
    \begin{cases}
        \displaystyle
        \int_{K_{ij}}((u_h)_t-v_h)d\mathbf{x}&=0,\\
        \displaystyle
        \int_{K_{ij}}\nabla((u_h)_t-v_h)\cdot\nabla\phi^{(u)} d\mathbf{x}
        &=\displaystyle\int_{\partial K_{ij}} (\widehat{v} - v^-_h) (\nabla\phi^{(u),-} \cdot{\mathbf{n}})dS 
        + \dfrac{c}{h^2}\int_{\partial K_{ij}}([\![u_h]\!]\cdot\mathbf{n}) \phi^{(u),-} dS\\
         &\displaystyle\quad - \sum\limits_{l=1}^p\dfrac{\sigma_{ij}^l}{h_{ij}} \int_{K_{ij}} (\nabla u_h - \mathbb{P}^{l-1}\nabla u_h) \cdot\nabla\phi^{(u)} d\mathbf{x},   \\
        \displaystyle\int_{K_{ij}}((v_h)_t\phi^{(v)} + \nabla u_h\cdot\nabla\phi^{(v)})d\mathbf{x}
        &=\displaystyle\int_{\partial K_{ij}} ( \widehat{\nabla u}\cdot{\mathbf{n}}) \phi^{(v),-} dS 
        - \sum\limits_{l=0}^q\dfrac{\tilde{\sigma}_{ij}^l}{h_{ij}}\int_{K_{ij}}(v_h - \mathbb{P}^{l-1}v_h)\phi^{(v)} d\mathbf{x},
    \end{cases}
\end{equation}
where $\mathbf{n}$ represents the outward pointing normal of the element $K_{ij}$. 
The superscripts ``+" and ``-'' refer to traces of data from outside and inside the element, respectively. We define the jumps at the cell interface for a given function $w\in V^k_h$ as
\begin{equation}
    [\![w]\!] = w^+\mathbf{n}^+ + w^-\mathbf{n}^- \in \mathbb{R}^2, \qquad 
    {[\![\nabla w]\!] =\nabla w^+\cdot \mathbf{n}^+ + \nabla w^- \cdot \mathbf{n}^-} \in \mathbb{R} 
\end{equation}
{with $\bm{n}^-=\bm{n}$ and $\bm{n}^+=-\bm{n}$.}
As for the numerical fluxes $\widehat{\nabla u}$ and $\widehat{v}$, we again consider a general parameterization with $\alpha\in[0,1]$ and $\tau, \beta \geq0$. 
Using $\boldsymbol{\zeta} = (\alpha - \dfrac{1}{2},\alpha - \dfrac{1}{2})^T$, the numerical flux can be defined as follows,
\begin{equation}
    \begin{aligned}
     \widehat{\nabla u} &= \dfrac{\nabla u_h^+ + \nabla u_h^-}{2} - \left((\boldsymbol{\zeta}\cdot\mathbf{n}^+)\nabla u_h^+ + (\boldsymbol{\zeta}\cdot\mathbf{n}^-)\nabla u_h^-\right)-\beta[\![v_h]\!],\\
    \widehat{v} &= \dfrac{v_h^+ + v_h^-}{2} + \boldsymbol{\zeta}\cdot(v_h^+\mathbf{n}^+ + v_h^-\mathbf{n}^-) -\tau[\![\nabla u_h]\!]. \label{eq:flux_2D}
\end{aligned}
\end{equation}
It is easy to verify that with the following special parameters, the numerical fluxes are equivalent to those in 1D \eqref{eq:numerical_flux} along the horizontal or vertical cell-interface,
\begin{align}
\begin{array}{lll}
    & \text{Central flux (C-flux)}: & \alpha = \dfrac{1}{2}, \quad \beta = \tau = 0.\\
    & \text{Alternating flux (A-flux)}:& \alpha = 0 \,\, \text{or} \,\, 1, \quad \beta = \tau = 0.\\
    & \text{Sommerfeld flux (S-flux)}: & \alpha = \dfrac{1}{2}, \quad \beta = \dfrac{1}{2s}, \quad \tau = \dfrac{s}{2},\,\, s>0.
\end{array}
\end{align}

\begin{figure}[htbp]
    \centering
    \begin{tikzpicture}[scale = 1.2]
        \draw[dashed] (0, 0) grid (4, 4);
        \node at (1.5, 1.5) {$K_{i-1,j}$};
        \node at (2.5, 2.5) {$K_{i,j+1}$};
        \node at (1.7, 2.3) {$\mathbf{v}$};
        \node at (2.5, 1.5) {$K_{ij}$};
        \fill[red] (2, 2) circle (3pt);
    \end{tikzpicture}
    \caption{Illustration of the jump term in (\ref{jump_v}), where the red solid point represents a vertex $\mathbf{v}$ of $K_{ij}$, and the jumps on the faces $K_{ij}\cap K_{i-1,j}$ and $K_{ij}\cap K_{i,j+1}$ are used in the definition.}
    \label{fig:2D}
\end{figure}
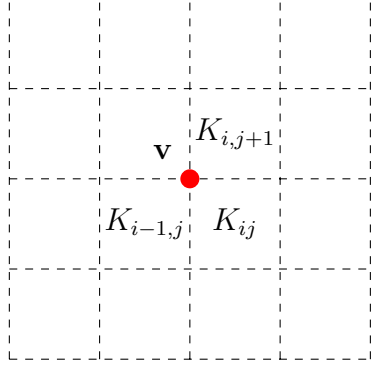

\noindent
Following the idea in \cite{lu2021oscillation}, the damping coefficients $\sigma_{ij}^l$ and $\tilde{\sigma}_{ij}^l$ depend on the jumps at the vertices of $K_{ij}$. Let $[\![w]\!]|_{\mathbf{v}}$ denote the jump of $w$ on element $K_{ij}$ and its adjacent elements at vertex $\mathbf{v}$. 
As illustrated in Figure \ref{fig:2D}, at vertex $\mathbf{v}=(x_{i-1/2}, y_{j+1/2})$ in $K_{ij}$, the term $[\![w]\!]|_{\mathbf{v}}$  takes into account only jumps across faces that contain this vertex. More precisely, we define
\begin{equation}\label{jump_v}
    [\![w]\!]|_{\mathbf{v}}^2 = \left( w|_{K_{ij}}(\mathbf{v})-w|_{K_{i-1,j}}(\mathbf{v}) \right)^2 + \left( w|_{K_{ij}}(\mathbf{v})-w|_{K_{i,j+1}}(\mathbf{v}) \right)^2.
\end{equation}
Additionally, the coefficients of the damping term $\sigma_{ij}^l \geq 0, \tilde{\sigma}_{ij}^l \geq 0$ are taken as follows,
\begin{equation}\label{jump}
    \begin{aligned}
        \sigma_{ij}^l &= \dfrac{2(2l+1)}{(2p-1)}\dfrac{h_{ij}^l}{l!} \sum\limits_{|\boldsymbol{\alpha}|=l} \left( \dfrac{1}{4} \sum\limits_{\mathbf{v}\in K_{ij}} ([\![\partial^{\boldsymbol{\alpha}}u_h]\!]|_\mathbf{v})^2 \right)^{1/2},
        \quad l\geq 1, \\
        \tilde{\sigma}_{ij}^l & = \dfrac{2(2l+1)}{(2q-1)}\dfrac{h_{ij}^{l+1}}{(l+1)!} \sum\limits_{|\boldsymbol{\alpha}|=l} \left( \dfrac{1}{4} \sum\limits_{\mathbf{v}\in K_{ij}} ([\![\partial^{\boldsymbol{\alpha}}v_h]\!]|_\mathbf{v})^2 \right)^{1/2},
        \quad l\geq 0.
    \end{aligned}
\end{equation}
Here, the vector $\boldsymbol{\alpha} = (\alpha_1,\alpha_2)$
is the multi-index order with
$|\boldsymbol{\alpha}| = \alpha_1+\alpha_2,$
and $\partial^{\boldsymbol{\alpha}}w$ is defined as
\begin{equation*}
\partial^{\boldsymbol{\alpha}}w = \dfrac{\partial^{|\boldsymbol{\alpha}|}w}{\partial x^{\alpha_1} \partial y^{\alpha_2}}.
\end{equation*}
The operator $\mathbb{P}^l$ is the standard $L^2$ projection onto $V^l_h$, that for any function $w$ construct $\mathbb{P}^{l}w \in V^l_h$ such that
\begin{equation}
    \int_{K_{ij}}(\mathbb{P}^{l}w - w)\, \phi d\mathbf{x} = 0, 
    \quad \forall \, \phi(\mathbf{x})\in V^l_h.
\end{equation}
And we use the convention $\mathbb{P}^{-1} = \mathbb{P}^0$ as well.

\subsection{Stability analysis}
In this subsection, we establish a stability proof for the multi-dimension problems with general numerical fluxes. 

\begin{theorem}[Semi-discrete stability]
    Under the assumption of periodic boundary condition in each direction, for any positive integers $p$ and $q$, the semi-discrete DG scheme \eqref{eq:DG_2D} with the numerical fluxes \eqref{eq:flux_2D} satisfies 
    \begin{equation}
    \dfrac{d\mathbb{E}_h}{dt}\leq 0,
    \end{equation} 
    as long as the parameters $c$, $\tau$, and $\beta$ are not less than 0,
    and the semi-discrete energy $\mathbb{E}_h$ is defined as,
    \begin{equation}
        \mathbb{E}_h = \int_{\Omega} \left( \nabla u_h\cdot\nabla u_h + v_h^2 \right) d\mathbf{x}.
    \end{equation}
\end{theorem}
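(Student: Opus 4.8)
The plan is to mirror the one-dimensional energy argument, testing the scheme against its own solution. First I would set $(\phi^{(u)},\phi^{(v)})=(u_h,v_h)$ in the second and third equations of \eqref{eq:DG_2D} and add them over a single cell $K_{ij}$; the first equation is merely the constant-mode constraint and does not enter the energy computation. On the left-hand side the two volume cross terms $-\int_{K_{ij}}\nabla v_h\cdot\nabla u_h\,d\mathbf{x}$ and $+\int_{K_{ij}}\nabla u_h\cdot\nabla v_h\,d\mathbf{x}$ cancel identically, leaving the local energy rate $\frac12\frac{d}{dt}\int_{K_{ij}}(\nabla u_h\cdot\nabla u_h+v_h^2)\,d\mathbf{x}$. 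Summing over all $K_{ij}\in\Gamma_h$ then produces $\frac12\frac{d\mathbb{E}_h}{dt}$ on the left, so the task reduces to showing that the accumulated right-hand side is non-positive.

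Next I would substitute the general fluxes \eqref{eq:flux_2D} into the surface integrals and regroup the cell sum as a sum over the shared edges in $\Gamma_e$, using periodicity to dispose of the domain boundary. The central and alternating ($\boldsymbol{\zeta}$-dependent) parts of $\widehat{v}$ and $\widehat{\nabla u}$ are conservative: when the two one-sided contributions on a common edge are added, these parts telescope and cancel exactly, exactly as the $\alpha$ and $(1-\alpha)$ terms do in the computation preceding \eqref{eq:proof_add1}. What survives are the dissipative pieces, collapsing into $-\tau\sum_{e}\int_e[\![\nabla u_h]\!]^2\,dS$ and $-\beta\sum_e\int_e|[\![v_h]\!]|^2\,dS$, both non-positive since $\tau,\beta\ge 0$. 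The penalty term is treated the same way: adding the two one-sided contributions on each interior edge with the vector jump $[\![u_h]\!]=(u_h^--u_h^+)\mathbf{n}$ yields the perfect square $-\frac{c}{h^2}\sum_e\int_e|[\![u_h]\!]|^2\,dS\le 0$, the two-dimensional analogue of the $-\frac{c}{h^2}[\![u_h]\!]_{j+1/2}^2$ contribution in 1D.

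For the damping terms I would exploit the orthogonality of the local $L^2$ projection. Decomposing $\nabla u_h=(\nabla u_h-\mathbb{P}^{l-1}\nabla u_h)+\mathbb{P}^{l-1}\nabla u_h$ and noting that $\mathbb{P}^{l-1}\nabla u_h\in (V_h^{l-1})^2$ is orthogonal to the projection error, each summand reduces to $\int_{K_{ij}}|\nabla u_h-\mathbb{P}^{l-1}\nabla u_h|^2\,d\mathbf{x}\ge 0$, and likewise $\int_{K_{ij}}(v_h-\mathbb{P}^{l-1}v_h)v_h\,d\mathbf{x}=\int_{K_{ij}}(v_h-\mathbb{P}^{l-1}v_h)^2\,d\mathbf{x}\ge 0$. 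Since $\sigma_{ij}^l,\tilde{\sigma}_{ij}^l\ge 0$ and these terms enter with a minus sign, they are non-positive as well. Collecting all contributions gives $\frac12\frac{d\mathbb{E}_h}{dt}\le 0$, which is the claim.

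I expect the main obstacle to be the edge bookkeeping in the second paragraph: in two dimensions the fluxes carry the normal vector $\mathbf{n}$ and the parameter vector $\boldsymbol{\zeta}$, so one must verify carefully that the conservative parts cancel and that the signs of the surviving dissipative and penalty contributions are correct once the two orientations $\mathbf{n}^-=\mathbf{n}$ and $\mathbf{n}^+=-\mathbf{n}$ on each shared edge are accounted for. The volume-term cancellation and the projection identities, by contrast, are routine extensions of the one-dimensional proof.
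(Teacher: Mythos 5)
Your proposal is structurally identical to the paper's proof: the same test functions $(u_h,v_h)$, the same cancellation of the volume cross terms, the same edge-by-edge regrouping under periodicity, and the same projection-orthogonality reduction of the damping terms to nonnegative squares.

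There is, however, one step that fails as literally stated --- and the paper's own proof asserts the same thing, so the defect traces back to the paper's formulation rather than to your strategy. With the paper's conventions, $[\![u_h]\!]=(u_h^{-}-u_h^{+})\mathbf{n}$ (inside minus outside, which you quote correctly) and the penalty entering \eqref{eq:DG_2D} with a \emph{plus} sign, the two one-sided contributions on a shared edge $e$ with traces $a$ (from one cell) and $b$ (from its neighbor) are $\frac{c}{h^2}\int_e (a-b)\,a\,dS$ and $\frac{c}{h^2}\int_e (b-a)\,b\,dS$, whose sum is $+\frac{c}{h^2}\int_e (a-b)^2\,dS=+\frac{c}{h^2}\int_e\Vert[\![u_h]\!]\Vert^2\,dS\ge 0$: an energy-\emph{increasing} term, not the claimed $-\frac{c}{h^2}\int_e\Vert[\![u_h]\!]\Vert^2\,dS$. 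Note the contrast with the $\tau$ and $\beta$ terms: in \eqref{eq:flux_2D} these are inserted with minus signs ($-\tau[\![\nabla u_h]\!]$, $-\beta[\![v_h]\!]$), which exactly compensates the fact that the 2D jump is oriented inside-minus-outside while the 1D jump in \eqref{eq:numerical_flux} is oriented outside-minus-inside; that is why your (and the paper's) conclusion $-\tau\sum_e\int_e[\![\nabla u_h]\!]^2\,dS-\beta\sum_e\int_e\Vert[\![v_h]\!]\Vert^2\,dS$ is correct for those terms. The penalty term lacks the compensating sign flip: for the scheme to be dissipative --- and to be the true 2D analogue of the 1D penalty, which does sum to $-\frac{c}{h^2}[\![u_h]\!]^2_{j+1/2}$ --- it should read $-\frac{c}{h^2}\int_{\partial K_{ij}}([\![u_h]\!]\cdot\mathbf{n})\,\phi^{(u),-}\,dS$ (equivalently, keep the plus sign but define the jump outside-minus-inside). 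A careful proof should verify this edge computation explicitly, as you anticipated in your final paragraph, rather than asserting the sign by analogy with 1D.
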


\begin{proof}
    Taking test function $(\phi^{(u)}(\mathbf{x}),\phi^{(v)}(\mathbf{x})) = (u_h,v_h)$ and summing over all $K_{ij}$, we  can obtain
    \begin{equation*}
        \begin{aligned}
             \dfrac{1}{2}\dfrac{d}{dt}& \int_{\Omega} \left( \nabla u_h\cdot\nabla u_h + v_h^2 \right) d\mathbf{x} \\
             =&
    \sum\limits_{K_{ij}\in\Gamma_h} \int_{K_{ij}} \left( \nabla(u_h)_t\cdot\nabla u_h + (v_h)_tv_h \right)d\mathbf{x}\\
     =& \sum\limits_{K_{ij}\in\Gamma_h} \int_{\partial K_{ij}}(\widehat{v} - v_h^-)(\nabla u_h^{-}\cdot\mathbf{n})dS +\sum\limits_{K_{ij}\in\Gamma_h} \int_{\partial K_{ij}}(\widehat{\nabla u}\cdot\mathbf{n})\, v_h^- dS\\
    &+\sum\limits_{K_{ij}\in\Gamma_h} \dfrac{c}{h^2} \int_{\partial K_{ij}}([\![u_h]\!]\cdot\mathbf{n}) u_h^- dS 
    -\sum\limits_{K_{ij}\in\Gamma_h}\sum\limits_{l=1}^p\dfrac{\sigma_{ij}^l}{h_{ij}} \int_{K_{ij}}(\nabla u_h - \mathbb{P}^{l-1}\nabla u_h)\cdot\nabla u_h d\mathbf{x} \\
    &-\sum\limits_{K_{ij}\in\Gamma_h}\sum\limits_{l=0}^q\dfrac{\tilde{\sigma}_{ij}^l}{h_{ij}} \int_{K_{ij}}(v_h - \mathbb{P}^{l-1}v_h)v_h d\mathbf{x}. 
        \end{aligned}
    \end{equation*}
    Plugging in the numerical fluxes and using the periodic boundary condition yields
    \begin{align*}
        \dfrac{1}{2}\dfrac{d}{dt}& \int_{\Omega} (\nabla u_h\cdot\nabla u_h + v_h^2) d\mathbf{x} \\
        = &    -\tau\sum_{e\in \Gamma_e} \int_{e}[\![\nabla u_h]\!]^2 dS
        -\beta\sum_{e}\int_{e\in \Gamma_e}\Vert[\![v_h]\!]\Vert^2 dS
        - \dfrac{c}{h^2}\sum_{e\in \Gamma_e} \int_e \Vert[\![u_h]\!]\Vert^2 dS\\ 
    &-\sum\limits_{K_{ij}\in\Gamma_h} \sum\limits_{l=0}^q \dfrac{\tilde{\sigma}_{ij}^l}{h_{ij}}\int_{K}(v_h - \mathbb{P}^{l-1}v_h)^2d\mathbf{x}\\
    &- \sum\limits_{K_{ij}\in\Gamma_h}\sum\limits_{l=1}^p 
 \dfrac{\sigma_{ij}^l}{h_{ij}} \int_{K_{ij}}(\nabla u_h - \mathbb{P}^{l-1}\nabla u_h)^T(\nabla u_h - \mathbb{P}^{l-1}\nabla u_h)d\mathbf{x},
        \end{align*}
        where $\Vert\cdot\Vert$ represents the $L_2$ norm.
    Utilizing the fact that these parameters $c\geq 0$, $\tau\geq 0$, $\beta \geq 0$, we obtain the desired energy estimate.
\end{proof}

\subsection{A prior error estimate }
\begin{theorem}[Error estimate]
    Assume the exact solutions $u\in H^{p+1}(\Omega)$ and $v\in H^{q+1}(\Omega)$
    with periodic boundary condition in each direction.     If $p-2\leq q\leq p$, then  
    \begin{equation}\label{eq:error_2D}
        \Vert\nabla(u-u_h)\Vert^2_{L^2(\Omega)} + \Vert v-v_h\Vert_{L^2(\Omega)}^2 \lesssim h^{2\gamma}\left( |u|^2_{H^{p+1}(\Omega)} +|v|^2_{H^{q+1}(\Omega)} \right),
    \end{equation}
    where $\gamma=\min(p', q')$, and
    \begin{equation}
    p' = 
    \begin{cases} 
    p-1, & \tau=0 , \\
    p-1/2, &\tau> 0,
    \end{cases} \qquad
    q' = 
    \begin{cases} 
    q, & \beta=0, \\
    q+1/2, &\beta> 0.
    \end{cases}
    \end{equation}
\end{theorem}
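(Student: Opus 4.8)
The plan is to follow the one-dimensional argument of Theorem \ref{Thm_err} almost verbatim, replacing $(\cdot)_x$ by $\nabla$, interface point-values by edge integrals over $\partial K_{ij}$, and the interface jumps appearing in the damping coefficients by the vertex jumps \eqref{jump_v}. First I would collect the volume and face contributions of \eqref{eq:DG_2D} into a bilinear form $B(U_h,\Phi)=\sum_{K_{ij}}B_{ij}(U_h,\Phi)$, together with damping functionals $D_u,D_v$ and penalty functional $P_u$, so that consistency gives $B_{ij}(U,\Phi)=0$ for the exact solution and $B_{ij}(U_h,\Phi)+D_{u,ij}+D_{v,ij}+P_{u,ij}=0$ for the numerical one. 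I would then set $\tilde v_h=\mathbb{P}^q v$ and define the two-dimensional Ritz projection $\tilde u_h\in V_h^p(\Omega)$ by $\int_{K_{ij}}\nabla(u-\tilde u_h)\cdot\nabla\phi^{(u)}\,d\mathbf{x}=0$ for all $\phi^{(u)}\in V_h^p$, with the mean condition $\int_{K_{ij}}(u-\tilde u_h)\,d\mathbf{x}=0$ fixing the cellwise constant. Writing $e_u=u-u_h$, $e_v=v-v_h$ and splitting the error as $D_h=\tilde D_h-\Delta_h$ with $\tilde D_h=(\tilde e_u,\tilde e_v)\in V_h^p(\Omega)\times V_h^q(\Omega)$ and $\Delta_h=(\delta_u,\delta_v)$ the projection errors, the test choice $\Phi=\tilde D_h$ converts the error equation into an evolution identity for $\tfrac12\sum_{K_{ij}}\int_{K_{ij}}(|\nabla\tilde e_u|^2+\tilde e_v^2)\,d\mathbf{x}$, whose right-hand side is $B(\Delta_h,\tilde D_h)-S(\tilde e_u,\tilde e_v)+D_u+D_v+P_u$, where $S$ gathers the $\tau,\beta$ jump penalties contributed by the flux.

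The key algebraic simplification, exactly as in 1D, is to apply Green's identity on each $K_{ij}$ to $B(\Delta_h,\tilde D_h)$. The interior part reduces to $\sum_{K_{ij}}\int_{K_{ij}}\bigl(\delta_v\,\Delta\tilde e_u+\nabla\delta_u\cdot\nabla\tilde e_v\bigr)\,d\mathbf{x}$ plus edge terms, and here the hypothesis $p-2\le q$ is essential: it guarantees $\Delta\tilde e_u\in V_h^{p-2}\subset V_h^q$, so $\int_{K_{ij}}\delta_v\,\Delta\tilde e_u\,d\mathbf{x}=0$ by $L^2$-orthogonality of $\delta_v=\mathbb{P}^q v-v$ to $V_h^q$; likewise $\tilde e_v\in V_h^q\subset V_h^p$ (using $q\le p$) makes $\int_{K_{ij}}\nabla\delta_u\cdot\nabla\tilde e_v\,d\mathbf{x}=0$ by the defining property of the Ritz projection. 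What survives are the edge flux terms of the same structure as in the 1D Cases 1--4, which I would bound in the four regimes $\{\tau=0,\tau>0\}\times\{\beta=0,\beta>0\}$ using the trace inverse inequality $\Vert w\Vert_{L^2(\partial K_{ij})}\lesssim h^{-1/2}\Vert w\Vert_{L^2(K_{ij})}$, Young's inequality with $\epsilon\le 2\min(\tau,\beta)$ to absorb the surviving jumps into $S$, and a two-dimensional flux approximation estimate analogous to \eqref{eq:error_flux}. This reproduces the bound \eqref{case_all} with the stated $p',q'$.

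For the remaining functionals I would mirror \eqref{eq:Du}--\eqref{Penalty_u}: inserting $\mathbb{P}^{l-1}\nabla\tilde e_u$ produces a nonpositive quadratic $-\sum\tfrac{\sigma_{ij}^l}{h_{ij}}\Vert\nabla\tilde e_u-\mathbb{P}^{l-1}\nabla\tilde e_u\Vert_{L^2(K_{ij})}^2$ that is discarded, leaving a cross term controlled by the coefficient bound on $\sigma_{ij}^l$ together with $\Vert\nabla\tilde u_h-\mathbb{P}^{l-1}\nabla\tilde u_h\Vert_{L^2(K_{ij})}$, while the penalty term expands as $\sum_{e}\tfrac{c}{h^2}\bigl(-[\![\tilde e_u]\!]^2+[\![\delta_u]\!][\![\tilde e_u]\!]\bigr)$ and closes by Young's inequality and the trace estimate on $\delta_u$. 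The genuinely new point, and the main obstacle, is estimating the damping coefficients \eqref{jump}: they are built from \emph{vertex} jumps $[\![\partial^{\boldsymbol{\alpha}}w]\!]|_{\mathbf{v}}$ rather than edge quantities, so I must control point values of polynomials. I would establish a point-evaluation inverse inequality $|w(\mathbf{v})|^2\lesssim h^{-2}\Vert w\Vert_{L^2(K_{ij})}^2$ for $w\in V_h^k$ and a matching vertex approximation bound $([\![\partial^{\boldsymbol{\alpha}}\delta_u]\!]|_{\mathbf{v}})^2\lesssim h^{2(p-|\boldsymbol{\alpha}|)+1}|u|_{H^{p+1}(K_{ij})}^2$, then sum over the multi-indices $|\boldsymbol{\alpha}|=l$ and the vertices of $K_{ij}$. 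The delicate part is tracking the $h_{ij}$-powers from $h_{ij}^l/l!$, the $h^{-2}$ point-evaluation loss, and the quasi-uniformity $\rho h\le h_j$, so that the damping contribution is bounded by $\Vert\nabla\tilde e_u\Vert_{L^2(\Omega)}^2+h^{2p+2}|u|_{H^{p+1}(\Omega)}^2$ and the analogous bound holds for $D_v$. Combining \eqref{case_all} with these damping and penalty bounds yields a Gronwall-type differential inequality, and the proof concludes by Gronwall's lemma and the triangle inequality, exactly as in the one-dimensional case.
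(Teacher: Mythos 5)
Your proposal follows the paper's own proof essentially step for step: the same bilinear-form/consistency setup, the same $L^2$ and Ritz projections with the splitting $D_h=\tilde D_h-\Delta_h$, the same test choice $\Phi=\tilde D_h$, the same four-regime treatment of the flux terms (with the orthogonality argument enabled by $p-2\le q\le p$), the same expansion of the damping and penalty functionals, and the same conclusion via Gronwall and the triangle inequality. You also correctly isolate the only genuinely two-dimensional ingredient — controlling the vertex jumps in the coefficients \eqref{jump} by a point-evaluation inverse inequality $|w(\mathbf{v})|^2\lesssim h^{-2}\Vert w\Vert^2_{L^2(K_{ij})}$ — which is exactly what the paper does (implicitly) in its estimate of $\sum_{K_{ij}}(\sigma_{ij}^l)^2$.

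One quantitative slip is worth flagging. The vertex approximation bound you propose, $([\![\partial^{\boldsymbol{\alpha}}\delta_u]\!]|_{\mathbf{v}})^2\lesssim h^{2(p-|\boldsymbol{\alpha}|)+1}|u|^2_{H^{p+1}(K_{ij})}$, is the one-dimensional boundary-trace scaling of \eqref{Ritz_Projection} and \eqref{est:jump2} transplanted to 2D, and in 2D it is false: point evaluation in $d$ dimensions costs $h^{-d/2}$ (rather than $h^{-1/2}$ for an edge trace), so the correct bound is $([\![\partial^{\boldsymbol{\alpha}}\delta_u]\!]|_{\mathbf{v}})^2\lesssim h^{2(p-|\boldsymbol{\alpha}|)}|u|^2_{H^{p+1}(K_{ij})}$, one full power of $h$ weaker than what you claim. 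The slip is harmless for the theorem: combined with the weight $h_{ij}^{2l}$, the correct bound still yields $\sum_{K_{ij}}(\sigma_{ij}^l)^2\lesssim\Vert\nabla\tilde e_u\Vert^2_{L^2(\Omega)}+h^{2p}|u|^2_{H^{p+1}(\Omega)}$ — precisely the estimate in the paper, whose exponent $2p$ (rather than the $2p+1$ of the 1D analogue) reflects this loss — and the damping bound $D_u\lesssim\Vert\nabla\tilde e_u\Vert^2_{L^2(\Omega)}+h^{2p+2}|u|^2_{H^{p+1}(\Omega)}$ and the final rate $\gamma$ are unchanged. Just do not set out to prove the stronger $+1$ version, since that attempt would fail; prove the $h^{2(p-|\boldsymbol{\alpha}|)}$ version and verify, as above, that it suffices.
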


\begin{proof}
    We denote $V^{p,q}=V_h^p\times V_h^q$ to be the set of all functions $U_h = (u_h,v_h)$,  
$U = (u,v)$ is the exact solution, and $\Phi = (\phi^{(u)},\phi^{(v)})\in V^{p,q}$ is the test function. Similar to one dimensional case, we define
\begin{equation*}
\begin{aligned}
    D_{u}(u_h, \phi^{(u)}) = \sum_{K_{ij}\in\Gamma_h} D_{u,K_{ij}}(u_h, \phi^{(u)}), \quad 
    D_{u,K_{ij}}(u_h, \phi^{(u)}) = & \sum_{l=1}^{p} \frac{\sigma_{{ij}}^l}{h} \int_{K_{ij}} \left(\nabla u_h - \mathbb{P}^{l-1}\nabla u_h\right)\cdot\nabla\phi^{(u)} d\mathbf{x},\\
     D_{v}(v_h, \phi^{(v)}) = \sum_{K_{ij}\in\Gamma_h} D_{v,K_{ij}}(v_h, \phi^{(v)}), \quad 
     D_{v,K_{ij}}(v_h, \phi^{(v)}) = & \sum_{l=0}^{q} \frac{\tilde{\sigma}_{{ij}}^l}{h} \int_{K_{ij}} \left(v_h - \mathbb{P}^{l-1}v_h \right) \phi^{(v)} d\mathbf{x},\\
     P_{u}(u_h,\phi^{(u)}) = \sum_{K_{ij}\in\Gamma_h} P_{u,K_{ij}}(u_h,\phi^{(u)}), \quad 
     P_{u,K_{ij}}(u_h,\phi^{(u)})=&- \dfrac{c}{h^2}\int_{\partial K_{ij}}([\![u_h]\!]\cdot\mathbf{n}) \phi^{(v),-}(\mathbf{x})dS.
\end{aligned}
\end{equation*}
We also define the error between exact and numerical solution
\begin{align*}
    e_u = u-u_h,\quad e_v=v-v_h,\quad D_h =(e_u,e_v),
\end{align*}
and split the error into $e_u=\tilde{e}_u-\delta_u$  and $e_v=\tilde{e}_v-\delta_v$ with  
\begin{equation*}
\begin{aligned}
    & \tilde{e}_u=\tilde{u}_h-u_h,\quad \tilde{e}_v=\tilde{v}_h-v_h,\quad \tilde{D}_h=(\tilde{e}_u,\tilde{e}_v)\in V^{p,q},\\
    & \delta_u=\tilde{u}_h-u,\quad\,\,\, \delta_v = \tilde{v}_h-v,\quad\,\,\, \Delta_h = (\delta_u,\delta_v),
\end{aligned}
\end{equation*}
where $\tilde{v}_h=\mathbb{P}^q v \in V_h^q$ is the $L^2$ projection of $v$, 
and $\tilde{u}_h\in V^p_h$ is obtained via a Ritz-type projection in multi-dimensions 
\begin{equation*}
\begin{cases}
    \displaystyle\int_{K_{ij}} \nabla(u-\tilde{u}_h)\cdot\nabla\phi^{(u)}d\mathbf{x} = 0 , \quad \forall \,\phi^{(u)}\in V^{p}_h,\\
    \displaystyle\int_{K_{ij}} (u - \tilde{u}_h)d\mathbf{x} = 0.
\end{cases}
\end{equation*}
The properties \eqref{eq:prop_add1}-\eqref{eq:inverse} also hold in 2D.

Moreover, choosing $\Phi = \tilde{D}_h$ and summing up over $K\in\Gamma_h$, we  obtain 
\begin{equation*}\label{Eh_2D}
\begin{aligned}
    \dfrac{1}{2} \dfrac{d}{dt} & \sum_j\int_{K_{ij}} \left( \nabla\tilde{e}_u\cdot\nabla\tilde{e}_u + \tilde{e}_v^2 \right) d\mathbf{x}\\
    =& -\sum_{e\in\Gamma_e}\int_{e} \left( [\![\nabla\tilde{e}_u]\!] \, \widehat{\delta_v} + [\![\tilde{e}_v]\!]\cdot\widehat{\nabla\delta_u} +  \beta [\![\tilde{e}_v]\!]^2+\tau [\![\nabla\tilde{e}_u]\!]^2 
     \right) dS\\
    &+D_u(u_h,\tilde{e}_u) +D_v(v_h,\tilde{e}_v) +P_u(u_h,\tilde{e}_u).
\end{aligned}
\end{equation*}

Similar to the 1D error estimate, the first term on the right-hand side  can also be estimated using the properties \eqref{eq:prop_add1}-\eqref{eq:inverse}. Consequently, we have

\begin{equation}\label{eq:case_2D}
    \begin{aligned}
        -\sum_{e\in\Gamma_e}\int_{e} & \left( [\![\nabla\tilde{e}_u]\!] \, \widehat{\delta_v} + [\![\tilde{e}_v]\!]\cdot\widehat{\nabla\delta_u} +  \beta [\![\tilde{e}_v]\!]^2+\tau [\![\nabla\tilde{e}_u]\!]^2 
     \right) dS \\
     & \lesssim \left(\Vert\nabla\tilde{e}_u\Vert^2_{L^2(\Omega)} + \Vert\tilde{e}_v\Vert^2_{L^2(\Omega)}\right) + h^{2p^\prime}|u|^2_{H^{p+1}(\Omega)} + h^{2q^\prime}|v|^2_{H^{q+1}(\Omega)},
    \end{aligned}
\end{equation}
where $p', q'$ are given as
    \begin{equation*}
    p' = 
    \begin{cases} 
    p-1, & \tau=0 , \\
    p-1/2, &\tau> 0,
    \end{cases} \qquad
    q' = 
    \begin{cases} 
    q, & \beta=0, \\
    q+1/2, &\beta> 0.
    \end{cases}
    \end{equation*}

With the definition of $\sigma_{ij}^l$, by inverse inequality \eqref{eq:inverse} and the approximation properties of projection, we have
\begin{equation*}
    \begin{aligned}
        \sum_{K_{ij}\in\Gamma_h}(\sigma_{ij}^l)^2 =& \sum_{K_{ij}\in\Gamma_h}\dfrac{4(2l+1)^2}{(2p-1)^2} \dfrac{h_{ij}^{2l}}{(l!)^2} \sum_{|\boldsymbol{\alpha}| = l}\left(\dfrac{1}{4}\sum\limits_{\mathbf{v}\in K_{ij}}([\![\partial^{\boldsymbol{\alpha}}u_h-\partial^{\boldsymbol{\alpha}}u]\!]|_{v})^2 \right)\\
        \lesssim& \sum_{K_{ij}\in\Gamma_h} \sum_{|\boldsymbol{\alpha}|=l} h_{ij}^{2l} \sum\limits_{\mathbf{v}\in K_{ij}} \left([\![\partial^{\boldsymbol{\alpha}}\tilde{e}_u]\!]^2|_{\mathbf{v}} + [\![\partial^{\boldsymbol{\alpha}}\delta_u]\!]^2|_{\mathbf{v}}\right)\\
         \lesssim& \Vert\nabla\tilde{e}_u\Vert_{L^2(\Omega)}^2 + h^{2p}|u|^2_{H^{p+1}(\Omega)}.
    \end{aligned}
\end{equation*}
On the other hand, 
\begin{equation*}
    \begin{aligned}
        &\Vert\nabla\tilde{u}_h-\mathbb{P}^{l-1}\nabla\tilde{u}_h\Vert_{L^2(K_{ij})}\\
        &\leq\Vert\nabla(\tilde{u}_h-u)\Vert_{L^2(K_{ij})}+\Vert (\nabla u-\mathbb{P}^{l-1}\nabla u)\Vert_{L^2(K_{ij})}+\Vert\mathbb{P}^{l-1}\nabla(u-\tilde{u}_h)\Vert_{L^2(K_{ij})}\\
        &\lesssim h_{ij}^p|u|_{H^{p+1}(K_{ij})} + h_{ij}^{\max(1,l)}|\nabla u|_{H^{\max(1,l)}(K_{ij})} + h_{ij}^p|u|_{H^{p+1}(K_{ij})}\\
        &\lesssim h_{ij}^{\max(1,l)}|u|_{H^{\max(1,l)+1}(K_{ij})}\\
        &\lesssim h_{ij}^{\max(1,l)}\left(\sum_{|\boldsymbol{\alpha}| = \max(1,l)+ 1} \int_{K_{ij}} |\partial^{\boldsymbol{\alpha}}u|^2dxdy\right)^{1/2}\\
        &\lesssim h_{ij}^{\max(1,l)+1}\sum_{|\boldsymbol{\alpha}| = \max(1,l)+ 1}|\partial^{\boldsymbol{\alpha}}u|_{\infty}\\
        &{\lesssim h^{\max(1,l)+1}}, \qquad l=0,1,\ldots,p.
    \end{aligned}
\end{equation*}
Hence, we obtain
\begin{equation}\label{Damping_u_error_2D}
    \begin{aligned}
        D_u(u_h,\tilde{e}_u)
        =&-\sum_{K_{ij}\in\Gamma_h}\sum_{l=1}^p\dfrac{\sigma_{ij}^l}{h_{ij}}\int_{K_{ij}}\left((\nabla\tilde{e}_u-\mathbb{P}^{l-1}\nabla\tilde{e}_u)^2 + (\nabla\tilde{u}_h-\mathbb{P}^{l-1}\nabla\tilde{u}_h)\nabla\tilde{e}_u \right)d\Omega\\
        \leq &\sum_{K_{ij}\in\Gamma_h}\sum_{l=1}^p\dfrac{\sigma_{ij}^l}{h_{ij}}\Vert\nabla\tilde{u}_h-\mathbb{P}^{l-1}\nabla\tilde{u}_h\Vert_{L^2(K_{ij})}\Vert\nabla\tilde{e}_u\Vert_{L^2(K_{ij})}\\
        \lesssim& \sum_{K_{ij}\in\Gamma_h}\sum_{l=1}^p\dfrac{(\sigma_{ij}^l)^2}{h_{ij}^2}\Vert\nabla\tilde{u}_h-\mathbb{P}^{l-1}\nabla\tilde{u}_h\Vert_{L^2(K_{ij})}^2 +\Vert\nabla\tilde{e}_u\Vert^2_{L^2(\Omega)}\\
        \lesssim& \sum_{K_{ij}\in\Gamma_h}\sum_{l=1}^p\dfrac{(\sigma_{{ij}}^l)^2}{h_{ij}^2}h^{2\max(1,l) + 2} + \Vert\nabla\tilde{e}_u\Vert^2_{L^2(\Omega)}\\
        \lesssim& \Vert\nabla\tilde{e}_u\Vert^2_{L^2(\Omega)} + h^{2p+2}|u|^2_{H^{p+1}(\Omega)}.
    \end{aligned}
\end{equation}
For the term $D_v(v_h,\tilde{e}_v)$,
 we have,
\begin{equation}\label{Damping_v_error_2D}
    \begin{aligned}
        D_v(v_h,\tilde{e}_v)\leq &\sum_{K_{ij}\in\Gamma_h}\sum_{l=1}^q \dfrac{\tilde{\sigma}_{{ij}}^l}{h_{ij}} \Vert\tilde{v}_h-\mathbb{P}^{l-1}\tilde{v}_h\Vert_{L^2(K_{ij})} \Vert\tilde{e}_v\Vert_{L^2(K_{ij})}\\
        \lesssim&{\Vert\tilde{e}_v\Vert^2+h^{2q+2}|v|^2_{H^{q+1}(\Omega)}}.
    \end{aligned}
\end{equation}
Finally, we consider the term $P_u(u_h,\tilde{e}_u)$,
\begin{equation}\label{Penalty_u_2D}
    \begin{aligned}
        P_{u}(u_h,\tilde{e}_u)
        =&-\sum_{K_{ij}\in\Gamma_h} \dfrac{c}{h^2} \int_{\partial K_{ij}} ([\![u_h]\!] \cdot\mathbf{n}^-)\tilde{e}^-_udS\\
        = &-\dfrac{c}{h^2} \sum_{e\in\Gamma_e}\int_{e} \left([\![u_h - u]\!] \cdot [\![\tilde{e}_u]\!]\right)dS\\
        =& \dfrac{c}{h^2} \sum_{e\in\Gamma_e}\int_{e} \left(-\Vert[\![\tilde{e}_u ]\!]\Vert^2 + [\![\delta_u]\!] \cdot [\![\tilde{e}_u]\!]\right) dS \\
        \lesssim& \dfrac{c}{h^2} \sum_{e\in\Gamma_e} \int_{e} \left(-\Vert[\![\tilde{e}_u ]\!]\Vert^2 + \dfrac{1}{4}\Vert[\![\delta_u]\!]\Vert^2 +  \Vert[\![\tilde{e}_u ]\!]\Vert^2  \right) dS \\
        \lesssim& \dfrac{c}{h^2}\sum_{e\in\Gamma_e}\int_{e} \Vert[\![\delta_u]\!]\Vert^2  dS \\
        \lesssim& h^{2p-1}|u|^2_{H^{p+1}(\Omega)}.
    \end{aligned}
\end{equation}
Therefore, by combining inequalities \eqref{eq:case_2D}-\eqref{Penalty_u_2D}, we  obtain 
\begin{equation*}
    \begin{aligned}
    \dfrac{d}{dt} \int_{\Omega}\left( \nabla\tilde{e}_u\cdot\nabla\tilde{e}_u + (\tilde{e}_v)^2 \right) d\mathbf{x}
    \lesssim &\int_{\Omega}\left( \nabla\tilde{e}_u\cdot\nabla\tilde{e}_u + (\tilde{e}_v)^2 \right) d\mathbf{x} + h^{2\gamma} \left( |u|^2_{H^{p+1}(\Omega)} + |v|^2_{H^{q+1}(\Omega)} \right),
    \end{aligned}
\end{equation*}
where $\gamma=\min(p', q')$. Then, using Gronwall's inequality yields 
\begin{equation*}
    \int_{\Omega}\left( \nabla\tilde{e}_u\cdot\nabla\tilde{e}_u + (\tilde{e}_v)^2 \right) d\mathbf{x}
    \lesssim h^{2\gamma} \left( |u|^2_{H^{p+1}(\Omega)} + |v|^2_{H^{q+1}(\Omega)} \right).
\end{equation*}
We complete the proof by using the triangle inequality.
\end{proof}

\section{Numerical results} \label{sec:num}

In this section, we present numerical tests to validate our theoretical results in one and two dimensions.
We discretize in time using the third order strong stability-preserving Runge-Kutta (SSP-RK3) method \cite{shu1988efficient}
for all cases. In particular, the time step is chosen as $dt = h/20, h^{4/3}/20, h^{5/3}/20, h^2/20, h^{7/3}/20$ for $p = 2,3,4,5,6$ respectively to match the accuray order in the spatial discretization. We use a default final time $t = 0.25$ for the following smooth numerical examples, reporting the $L^2$ error in displacement $u_h$ with A-Flux, S-Flux and C-Flux for smooth problems.
For problems with discontinuities, to avoid repetition, we only show the results with A-Flux and plot the numerical solutions and the reference solutions, where we use red solid lines to represent exact solutions and blue circles to represent numerical solutions at the midpoint of each element. For the approximation of finite element spaces, we  choose $q=p-1$.  
The value of the penalty parameter is taken as $c = 1$ in the experiments.

\begin{remark}
    We can extend our scheme \eqref{eq:DG_1D} to solve the wave equation with a nonlinear source term, 
\begin{equation}\label{nonlinear_1D}
        u_{tt} = u_{xx} + g(u).
\end{equation}
Based on \cite{appelo2020energy}, the corresponding semi-discrete DG scheme is 
\begin{equation}\label{eq:DGnonlinear_1D}
\begin{cases}
    \displaystyle
    {\int_{I_j} \left( (u_h)_t - v_h \right) dx }&=0, \\
    \displaystyle
    \int_{I_j} \left( (u_h)_t - v_h \right)_x \phi^{(u)}_{x} dx 
    &= \left(\widehat{v}_{j+1/2} - v_h|^{-}_{j+1/2} \right) \phi_x^{(u)}|^{-}_{j+1/2} -
    \left( \widehat{v}_{j-1/2} - v_h|^{+}_{j-1/2} \right) \phi_x^{(u)}|^{+}_{j-1/2} \\
    &\quad +\chi\displaystyle\int_{I_j}\phi^{(u)}\dfrac{g(u_h)}{u_h}\left(\dfrac{\partial u_h}{\partial t} - v_h\right)dx\\
    &\quad
    +\dfrac{c}{h^2} \left( [\![u_h]\!]_{j+1/2} \phi^{(u)}|^-_{j+1/2} -[\![u_h]\!]_{j-1/2} \phi^{(u)}|^+_{j-1/2} \right)\\
    &\quad{\displaystyle  - \sum_{l=1}^{p} \frac{\sigma_j^l}{h_j} \int_{I_j} \left((u_h)_x - \mathbb{P}^{l-1}(u_h)_x\right) \phi^{(u)}_x dx},\qquad \forall {\phi^{(u)}\in V_h^p(I_j)}, \\
    \displaystyle \int_{I_j} \left( (v_h)_t \phi^{(v)} + (u_h)_x \phi^{(v)}_{x} \right) dx 
    &= \widehat{u_x}|_{j+1/2}\phi^{(v)}|^{-}_{j+1/2} - 
    \widehat{u_x}|_{j-1/2} \phi^{(v)}|^{+}_{j-1/2}{\displaystyle +\int_{I_j}g(u_h)\phi^{(v)}dx}\\
    &\quad  - \sum\limits_{l=0}^{q} \dfrac{\tilde{\sigma}_j^l}{h_j} \displaystyle\int_{I_j} \left(v_h - \mathbb{P}^{l-1}v_h \right) \phi^{(v)} dx,
    \qquad \forall {\phi^{(v)}\in V_h^q(I_j)}.
\end{cases}
\end{equation}
When $\chi = 1$, the nonlinear term is treated in the same way as in \cite{appelo2020energy}. In this case, assuming $\lim\limits_{u\to 0}g(u)/u$ is bounded and  $G(u) := -\displaystyle\int_0^ug(z)dz > 0$, the scheme \label{nonlinear_1D} satisfies an energy estimate with the discrete energy
\begin{equation}
    \mathbb{E}_h(t) = \sum\limits_j\left(\dfrac{1}{2}\int_{I_j} \left(v_h^2 + (u_h)_x^2\right) dx + \int_{I_j}G(u_h(x))dx\right). 
\end{equation}
We have carried out several numerical tests of solving the wave equation with a nonlinear source term inspired by \cite{Cao2025}.  
In our numerical tests, we have also used $\chi = 0$ and observed stable results with significant speed up. Therefore, we use the scheme with $\chi = 1$ for one-dimensional problems, and $\chi = 0$ for two-dimensional problems in the numerical tests. 
\end{remark}

\subsection{One-dimensional problems}

\begin{example}\label{ex1} (Accuracy test for linear problem)\end{example}

We consider equation \eqref{eq:wave_1D} in domain $\Omega = (-1,1)$ with exact solution $u(x,t) = \sin(\pi(x-t))$, which is also used to obtain the initial data. 
In Figure \ref{fig:ex1}, we show the $L^2$ error of $u_h$ on uniform meshes at the final time for different choices of numerical fluxes and polynomial degrees. We can observe the optimal convergence rate $p+1$ with either S-flux or A-flux for all cases. For C-flux, the optimal convergence rate is obtained when the polynomial degree $p$ is odd, whereas suboptimal convergence rate reduced by one is obtained with even $p$. This result is consistent with the original EDG method in \cite{appelo2015new}.  

\begin{figure}[!ht] 
\centering
\subfigure[A-flux.]{\includegraphics[width=3.0in]{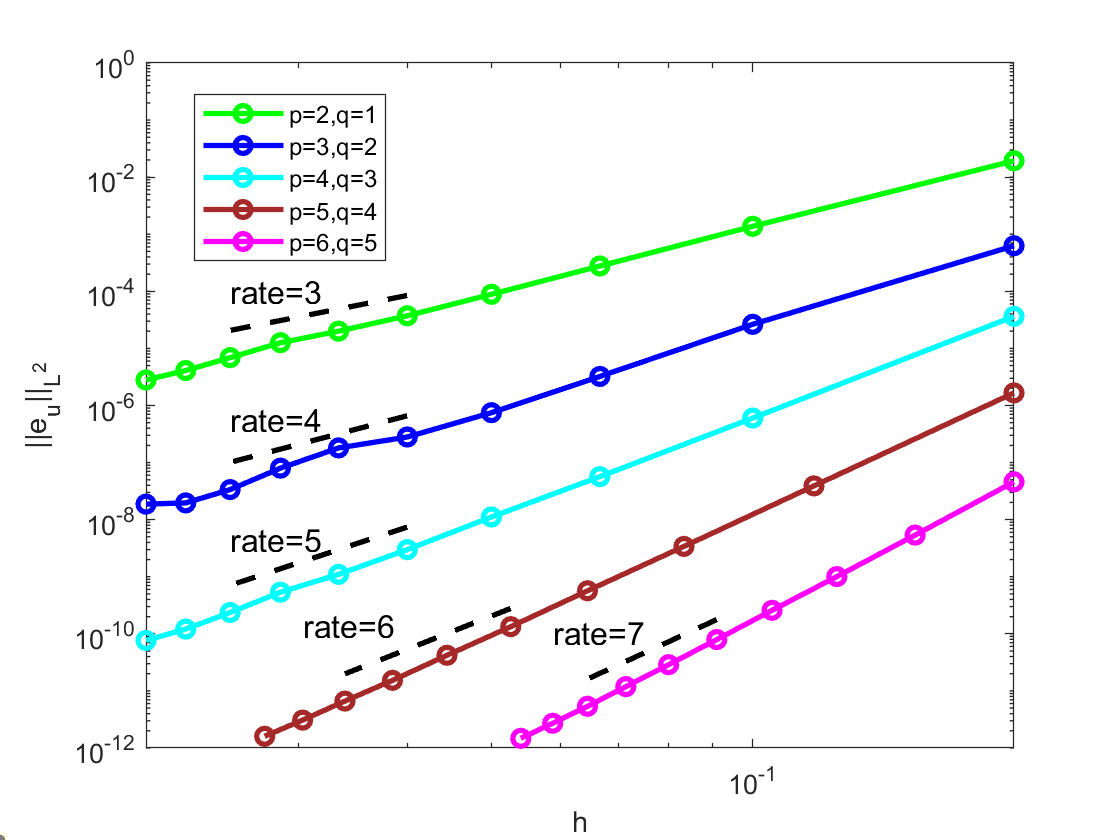}}
\subfigure[S-flux.]{\includegraphics[width=3.0in]{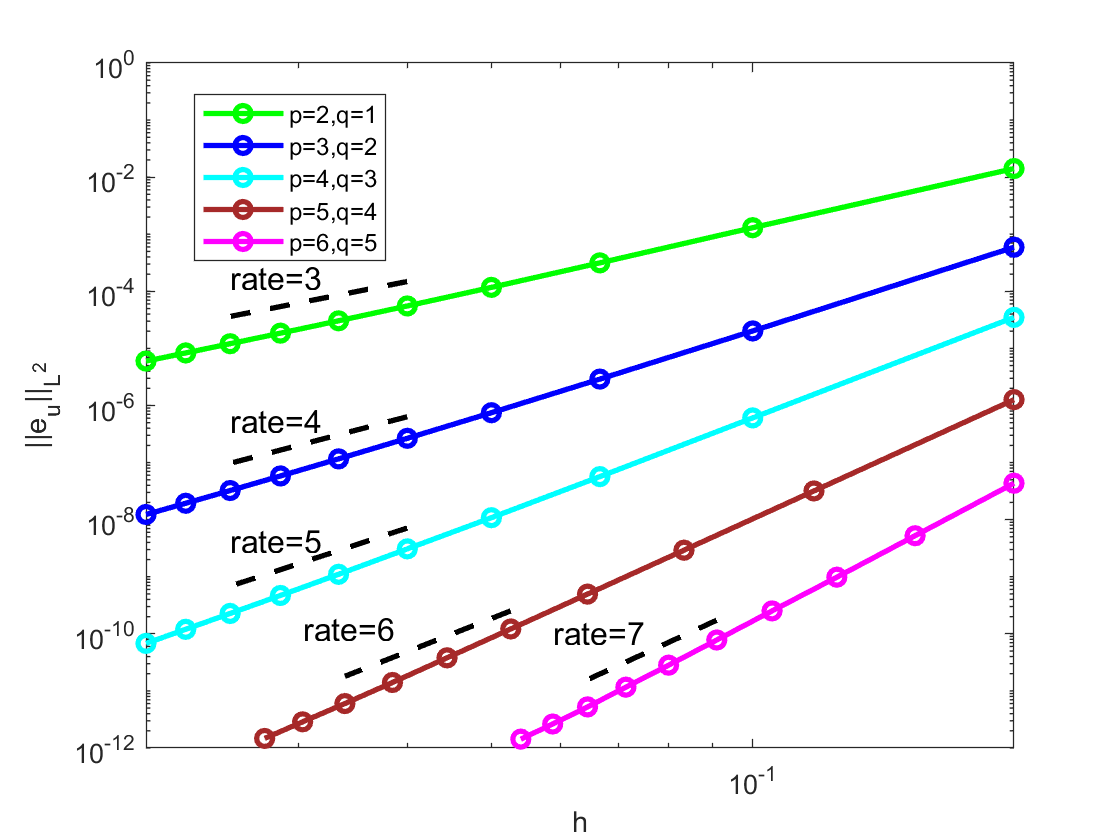}}
\subfigure[C-flux.]{\includegraphics[width=3.0in]{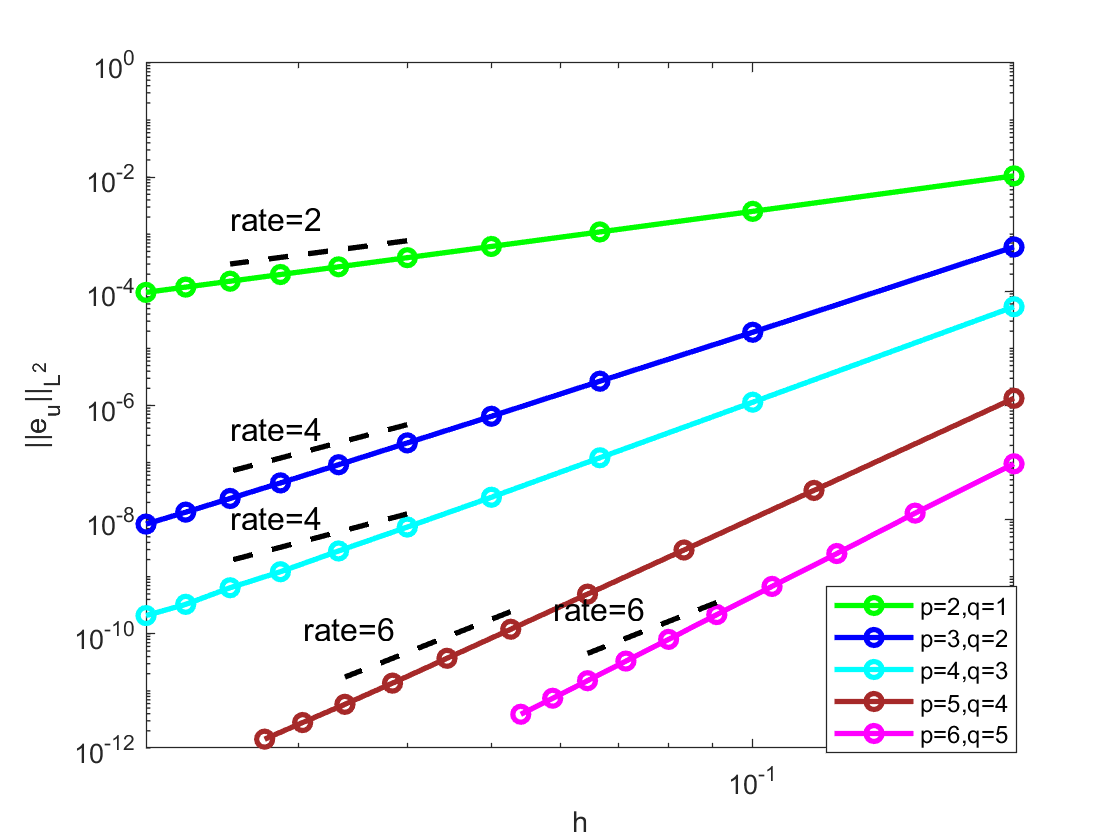}}
\caption{Example \ref{ex1}: $L^2$ errors with different numerical fluxes on uniform meshes.}
\label{fig:ex1}
\end{figure}

We have also performed the same convergence test on nonuniform meshes, created by randomly perturbing all internal nodes on a uniform mesh by up to 10\% of its mesh size. In Figure \ref{fig:ex1_nonuniform}, we show the $L^2$ error of $u_h$ at the final time for different choices of numerical fluxes and polynomial degrees, and we observe the same convergence rates as the cases with uniform meshes.

\begin{figure}[!ht] 
\centering
\subfigure[A-flux.]{\includegraphics[width=3.0in]{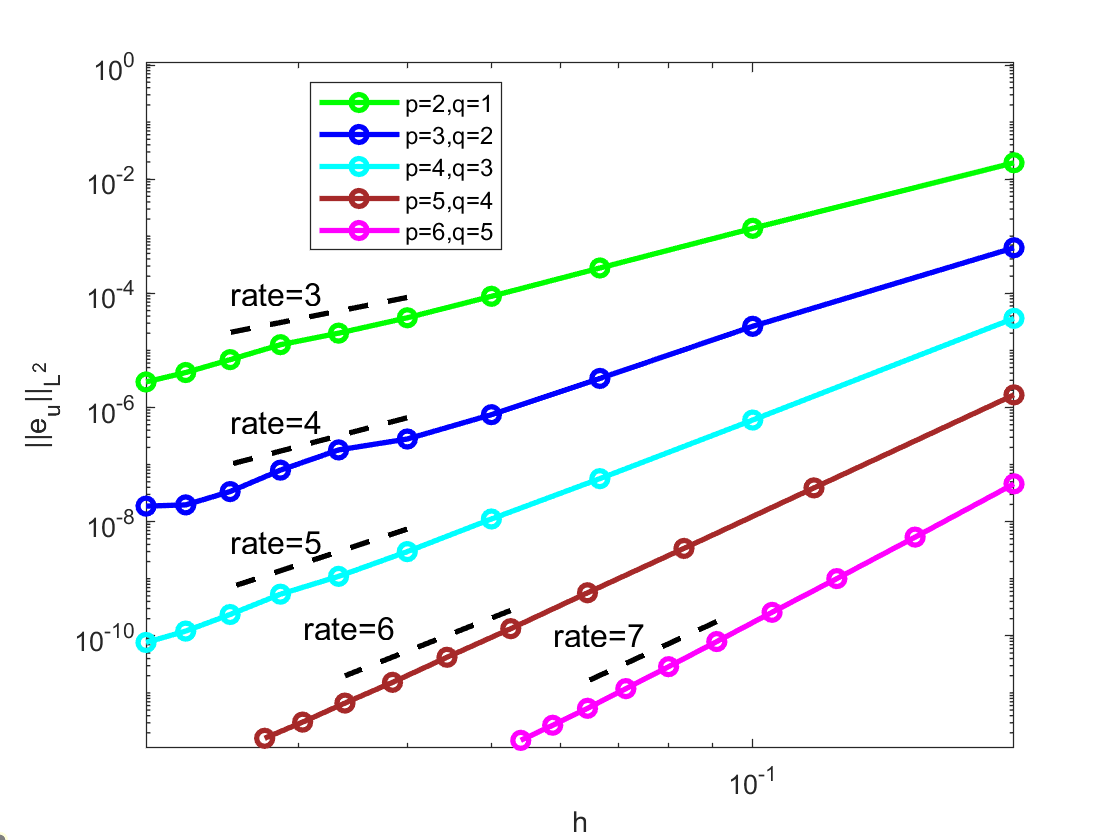}}
\subfigure[S-flux.]{\includegraphics[width=3.0in]{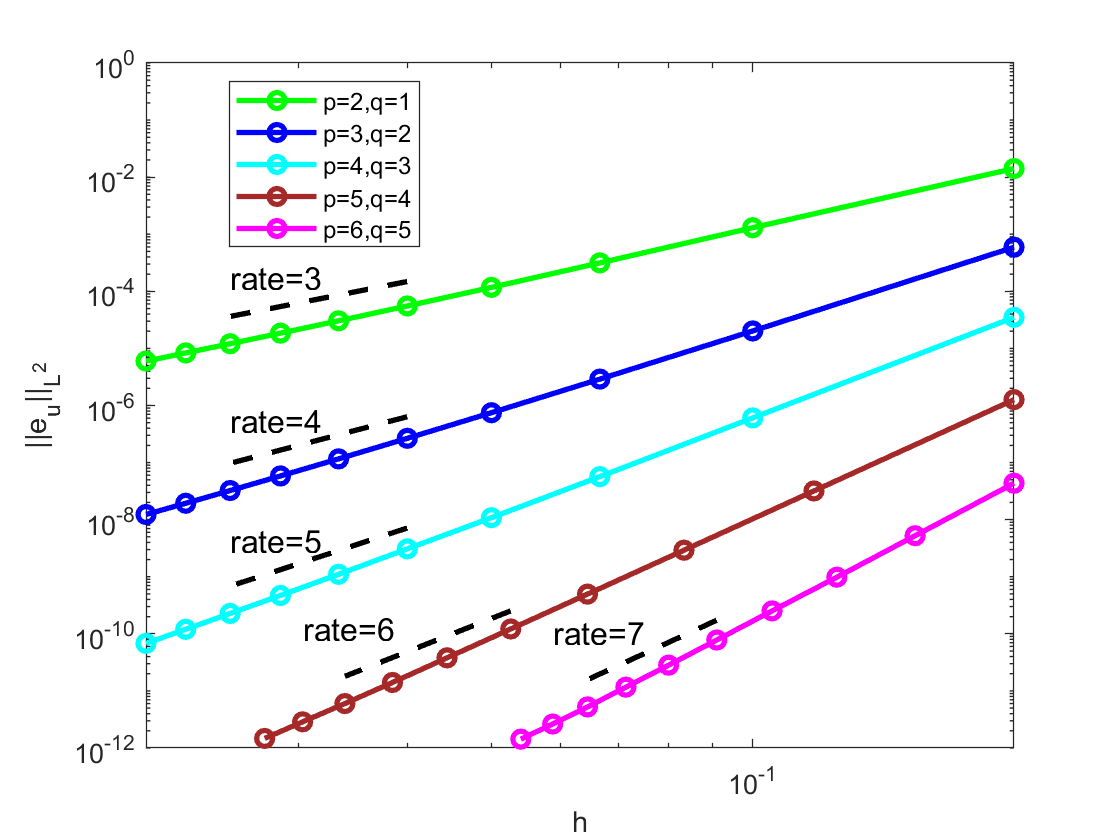}}
\subfigure[C-flux.]{\includegraphics[width=3.0in]{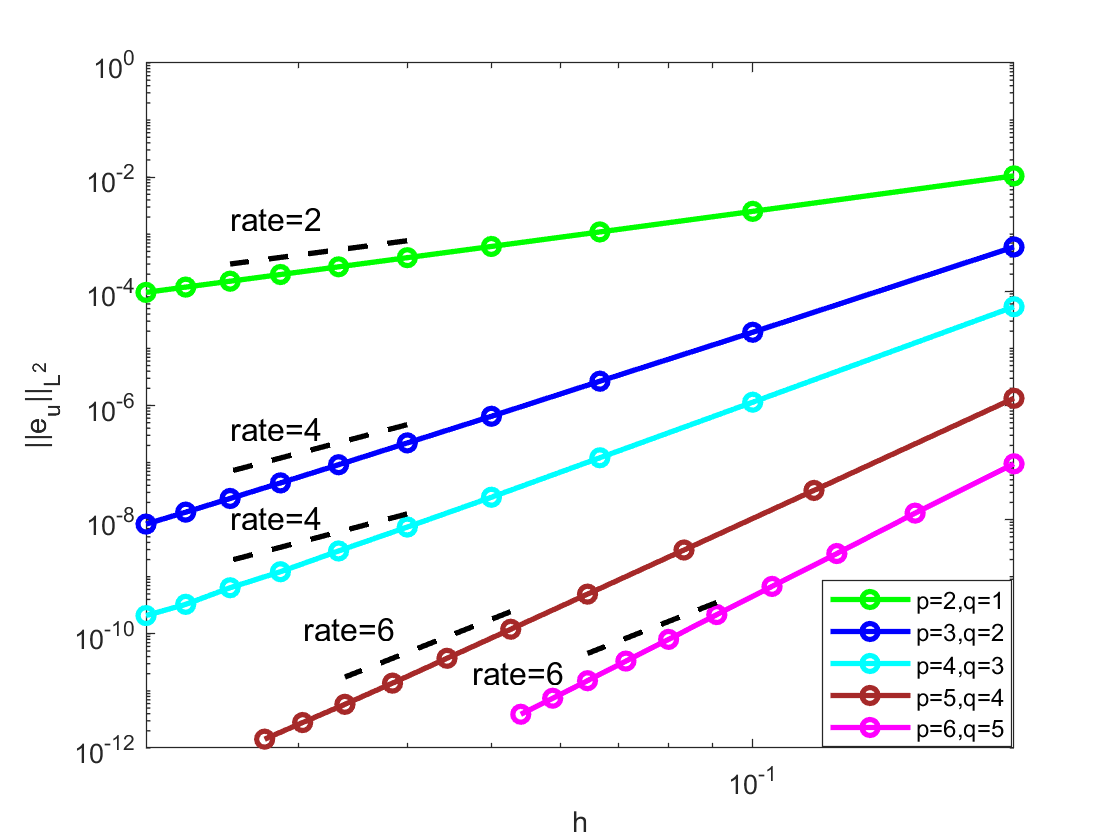}}
\caption{Example \ref{ex1}: $L^2$ errors with different numerical fluxes on nonuniform meshes.}
\label{fig:ex1_nonuniform}
\end{figure}

\begin{example}\label{ex2} (Accuracy test for nonlinear problem)\end{example}

For the case of equation \eqref{eq:wave_1D} with nonlinear source term $g(u)$, we consider soliton solutions of the Sine-Gorden equation in one dimension. Such equations appear in many physics applications and are known for their soliton and multi-soliton solutions. Here, we take the breather soliton solution as an example. In numerical tests, we choose $g(u) = -\sin u$ on domain $\Omega = (-40,40)$, and apply the homogeneous Neumann boundary conditions with the following initial conditions,
\begin{equation*}
    \begin{cases}
        u_{tt} = u_{xx} -\sin u, \\
        u(x,0) = 4\arctan\dfrac{\sqrt{0.75}}{0.5\cosh{\sqrt{0.75}x}},\\
        u_t(x,0) = 0,  \\
        u_x(-40,t) = u_x(40,t) = 0.
    \end{cases}
\end{equation*}
These conditions correspond to an exact standing breather soliton solution 
$$u(x,t) = 4\arctan\dfrac{\sqrt{0.75}\cos(0.5t)}{0.5\cosh(\sqrt{0.75}x)}.$$

In the experiments, non-linear terms are treated using the scheme (\ref{eq:DGnonlinear_1D}) with the parameter $\chi = 1$. In Figure \ref{fig:ex2}, we show the $L^2$ error of $u_h$ at the final time for different choices of numerical fluxes and polynomial degrees. We observe the optimal convergence rate with either S-flux or A-flux for all cases. For C-flux, optimal convergence rate is obtained with odd $p$, whereas the suboptimal convergence rate reduced by one order is obtained with even $p$. 

\begin{figure}[!ht] 
\centering
\subfigure[A-flux.]{\includegraphics[width=3.0in]{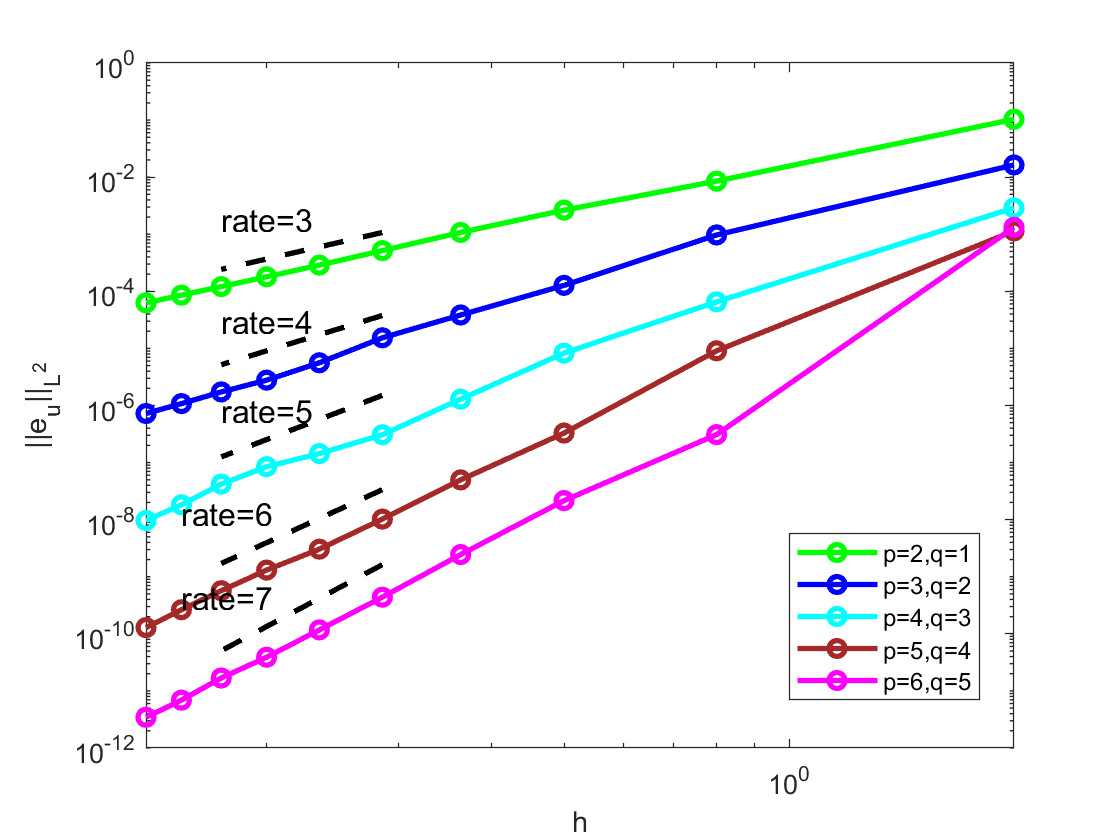}}
\subfigure[S-flux.]{\includegraphics[width=3.0in]{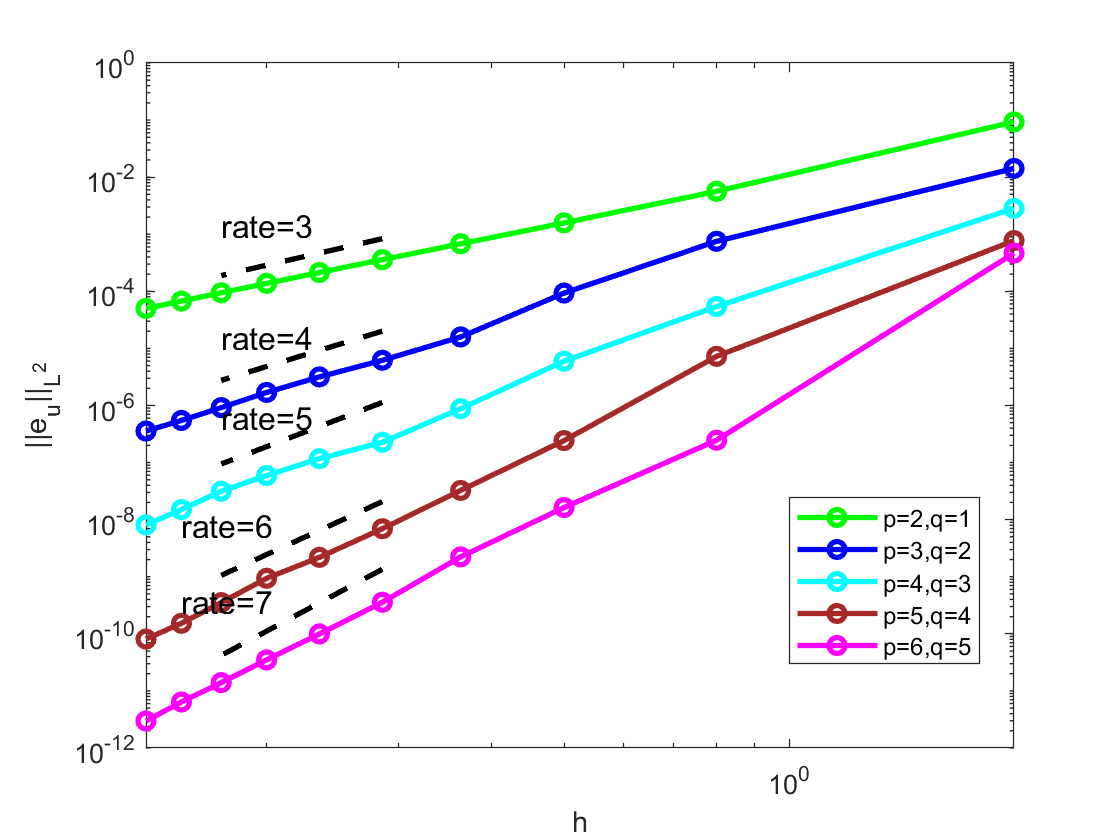}}
\subfigure[C-flux.]{\includegraphics[width=3.0in]{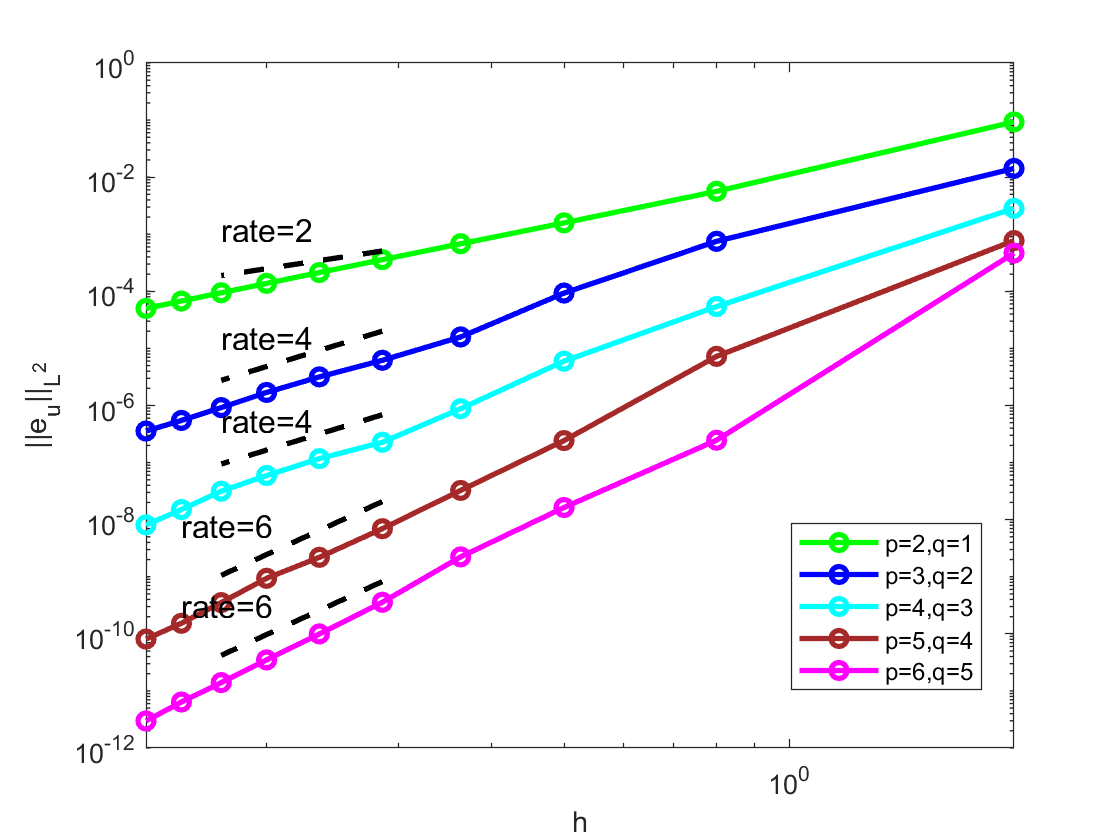}}
\caption{Example \ref{ex2}: $L^2$ errors with different numerical fluxes.}
\label{fig:ex2}
\end{figure}

\begin{example}\label{ex3} (Non-smooth solutions for linear problem)\end{example}
    
For the problems with non-smooth solutions, we consider equation \eqref{eq:wave_1D} in domain $\Omega = (-1,1)$, whose initial condition is a piecewise constant function,
    \begin{equation*}
        u(x,0) = \begin{cases}
              1.0, \quad|x|<0.5,\\
              0.5, \quad \text{otherwise,}
        \end{cases}\quad 
        u_t(x,0) = 0.
\end{equation*}

To illustrate the effectiveness of the proposed OF-EDG scheme, we choose $p = 2$, $q = 1$, and compare with the original EDG scheme, the EDG scheme with penalty term, and the EDG scheme with damping terms. We plot the numerical solutions on two different meshes consisting of 160 and 320 cells, respectively. In both cases, the discontinuities in the initial data align with the cell interfaces. From Figure \ref{fig:ex3}, we can see that the numerical solutions of the original EDG method or with the additional damping terms remain unchanged over time, generating wrong the discontinuous solutions. After adding the penalty term to the original EDG method, the numerical solution converges to the exact solution but spurious oscillations appear near the discontinuities. 
Meanwhile, the proposed OF-EDG method provides solutions in very good agreement with the exact solution.
This example demonstrates the necessity of both penalty term and damping term.

\begin{figure}[!htbp]
\centering
\subfigure[EDG with 160 cells.]{\includegraphics[width=0.5\textwidth, height=1.7in, keepaspectratio]{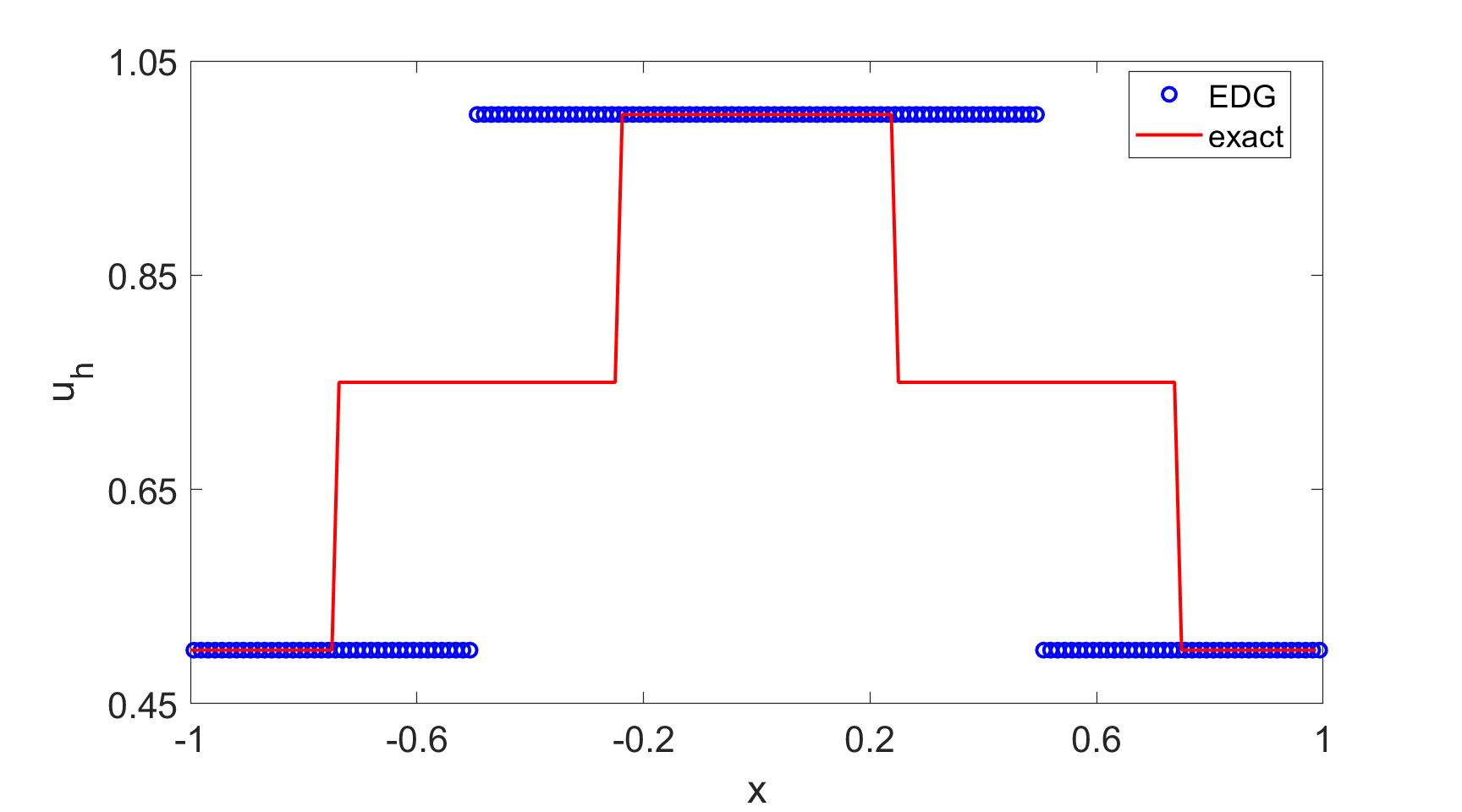}}
\subfigure[EDG with 320 cells.]{\includegraphics[width=0.5\textwidth, height=1.7in, keepaspectratio]{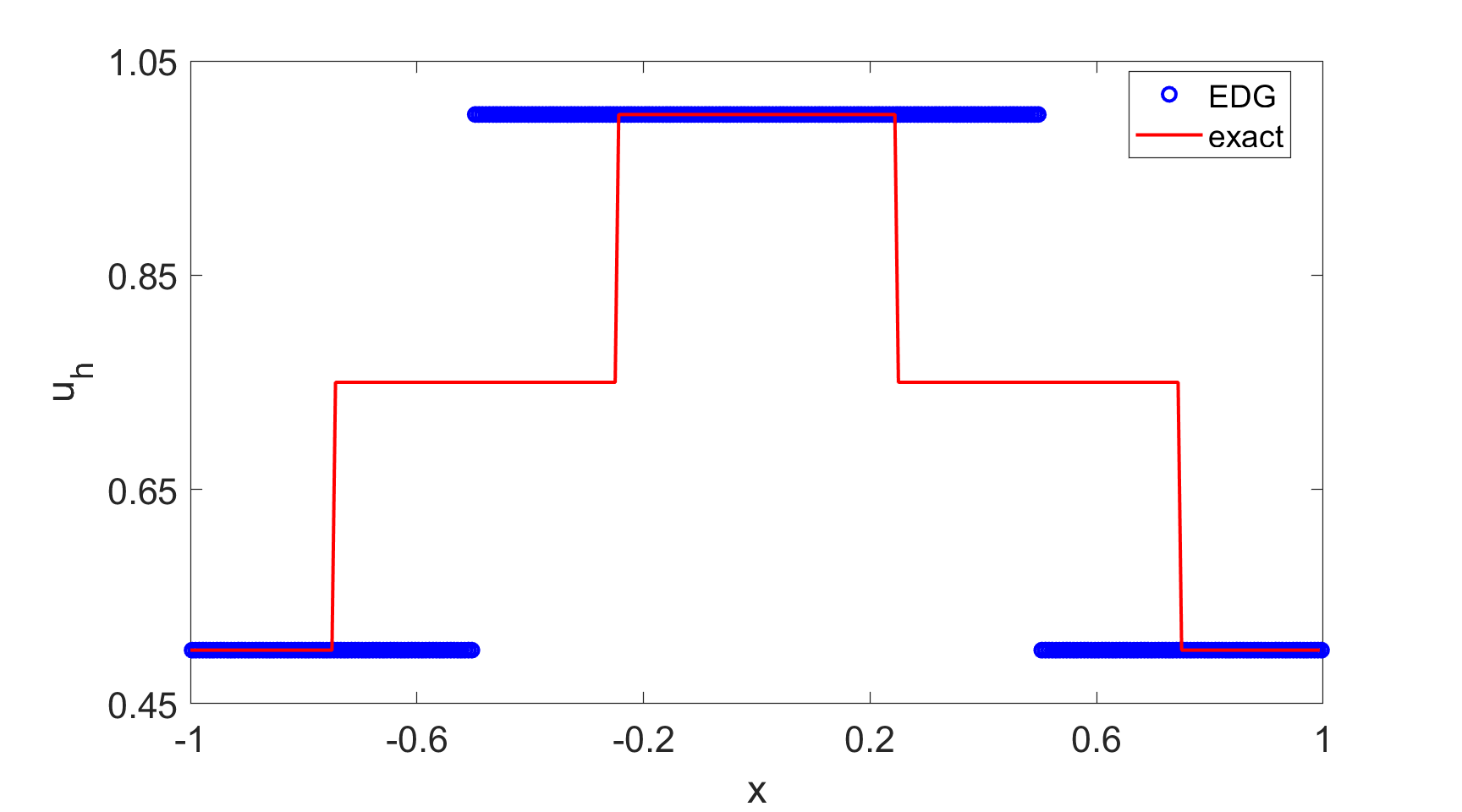}}
\subfigure[EDG with damping terms and 160 cells.]{\includegraphics[width=0.5\textwidth, height=1.7in, keepaspectratio]{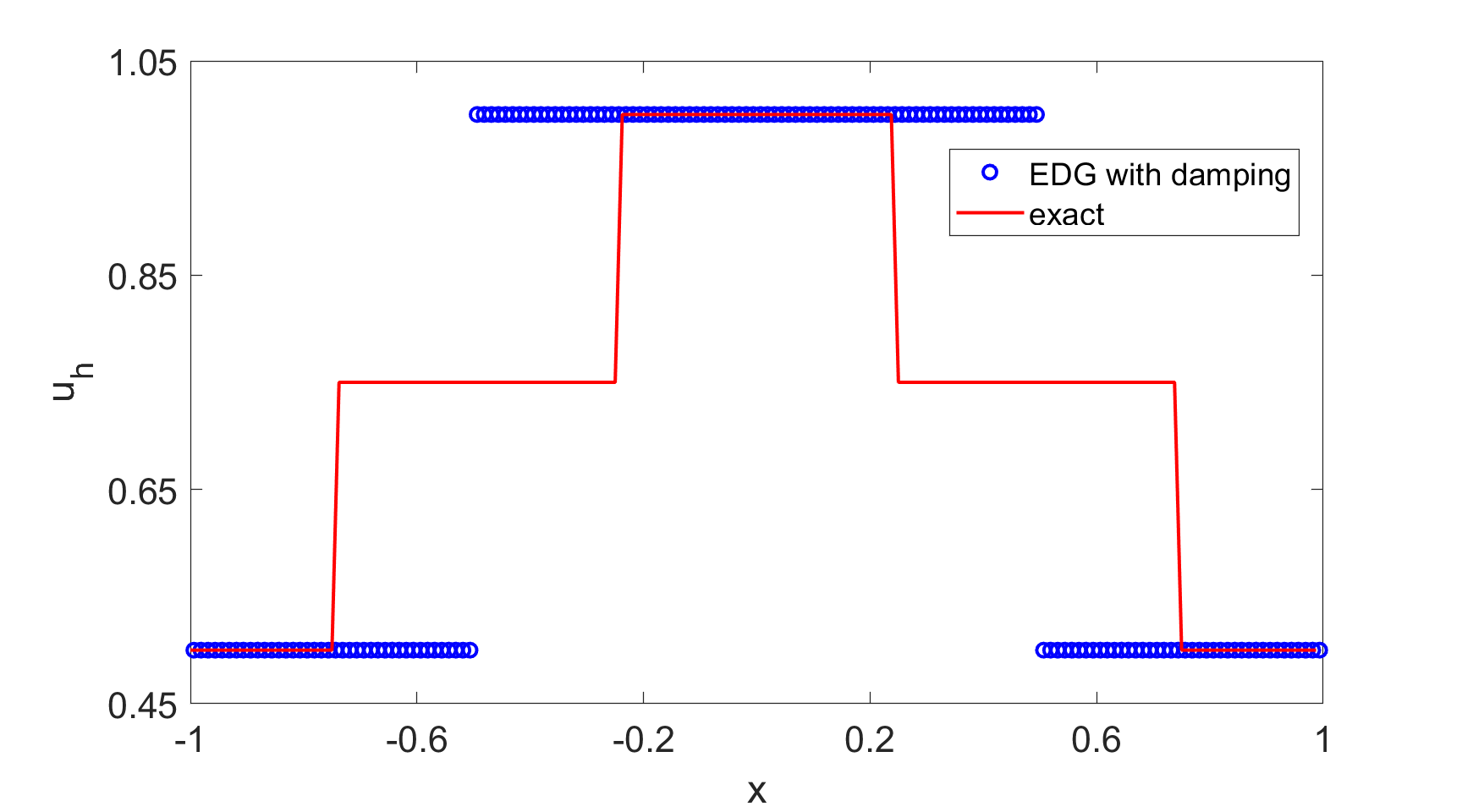}}
\subfigure[EDG with damping terms and 320 cells.]{\includegraphics[width=0.5\textwidth, height=1.7in, keepaspectratio]{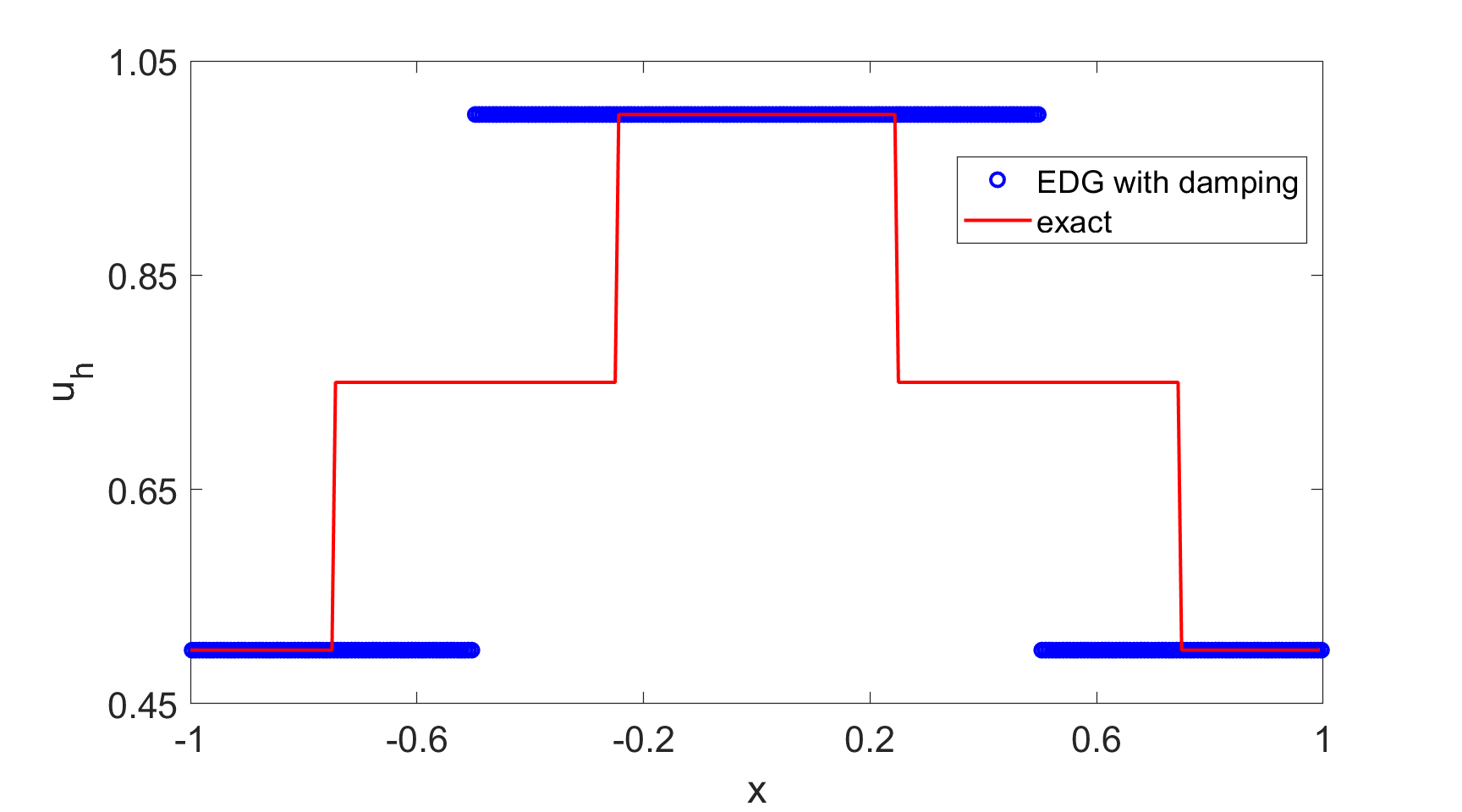}}
\subfigure[EDG with penalty term and 160 cells.]{\includegraphics[width=0.5\textwidth, height=1.7in, keepaspectratio]{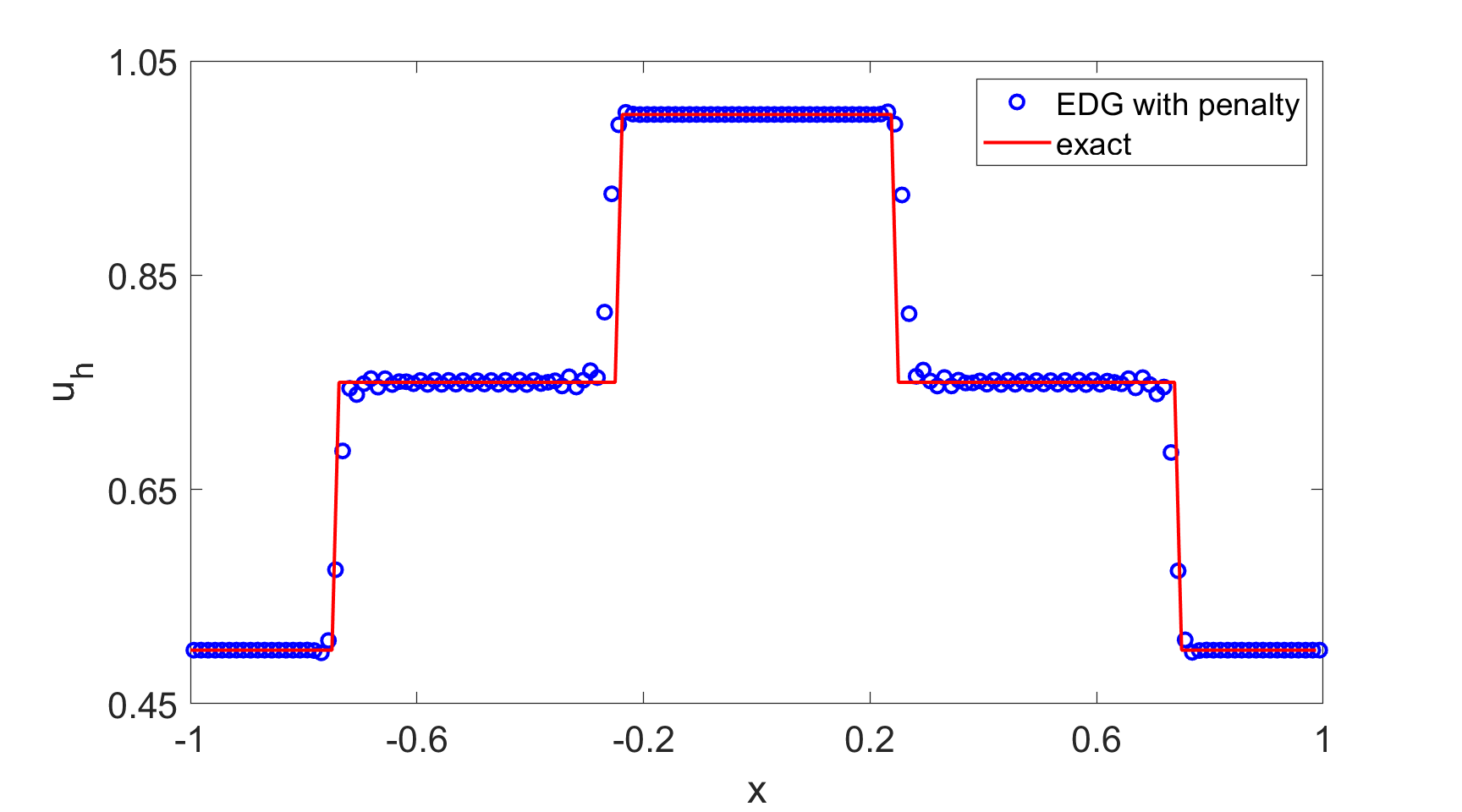}}
\subfigure[EDG with penalty term and 320 cells.]{\includegraphics[width=0.5\textwidth, height=1.7in, keepaspectratio]{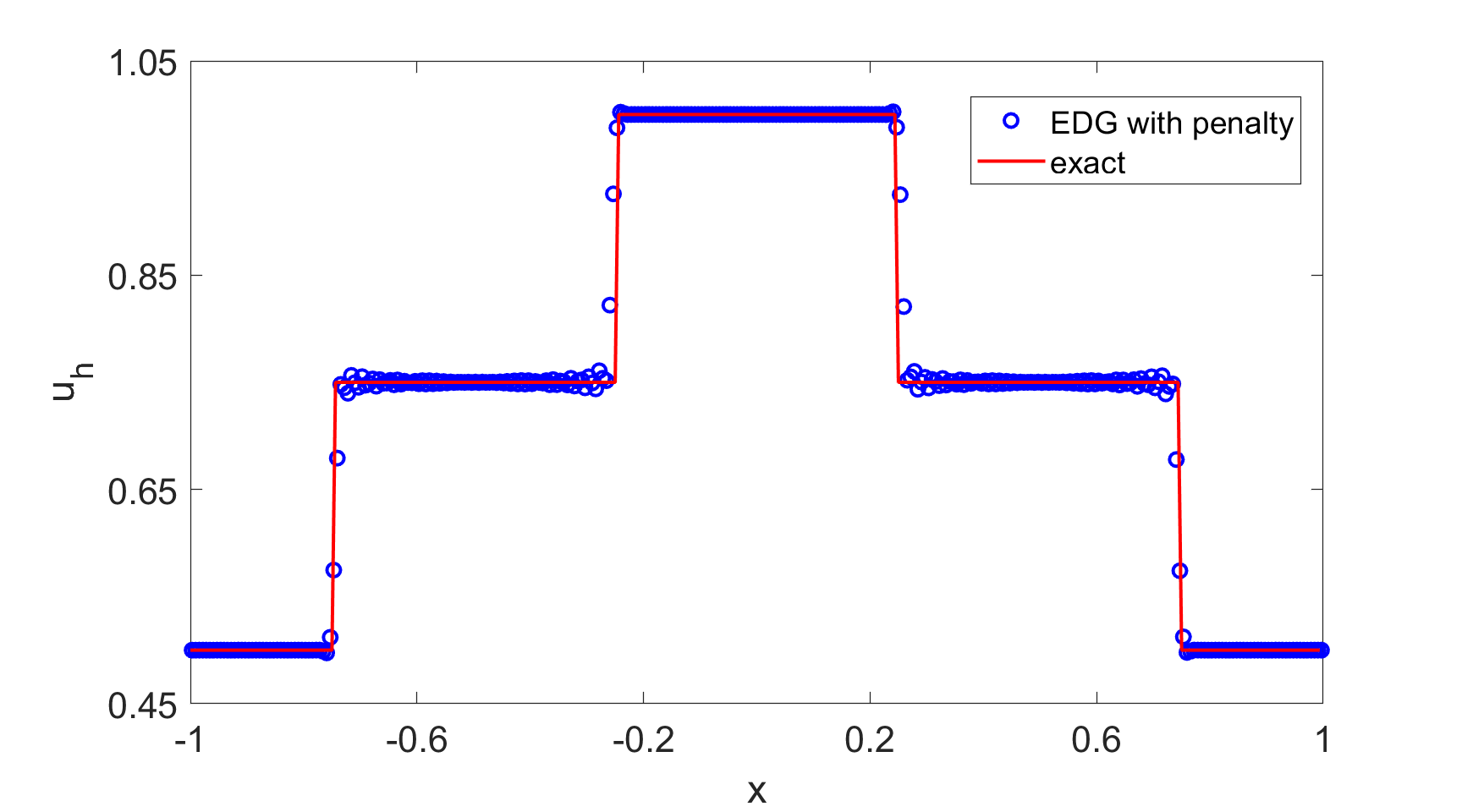}}
\subfigure[OF-EDG with 160 cells.]{\includegraphics[width=0.5\textwidth, height=1.7in, keepaspectratio]{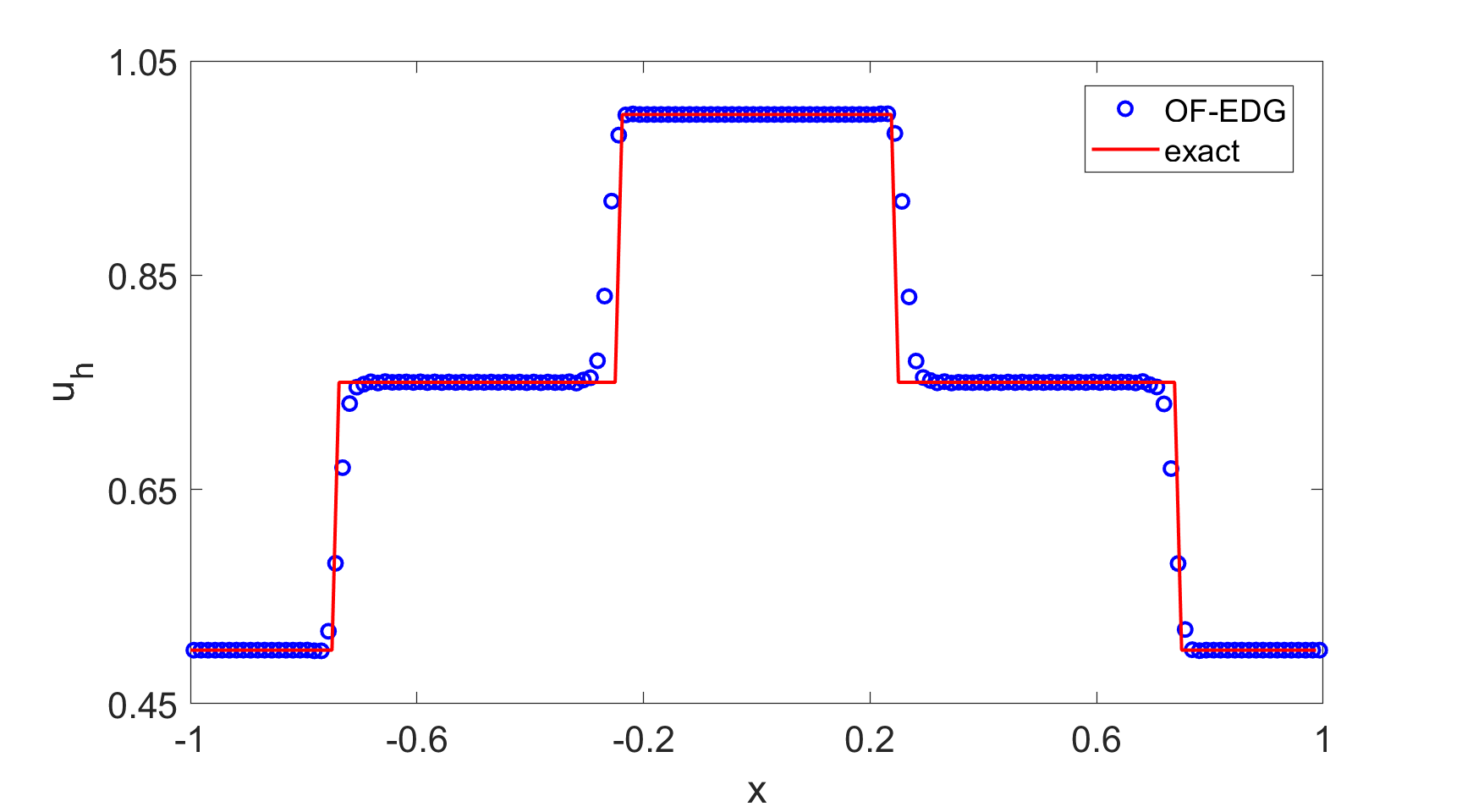}}
\subfigure[OF-EDG with 320 cells.]{\includegraphics[width=0.5\textwidth, height=1.7in, keepaspectratio]{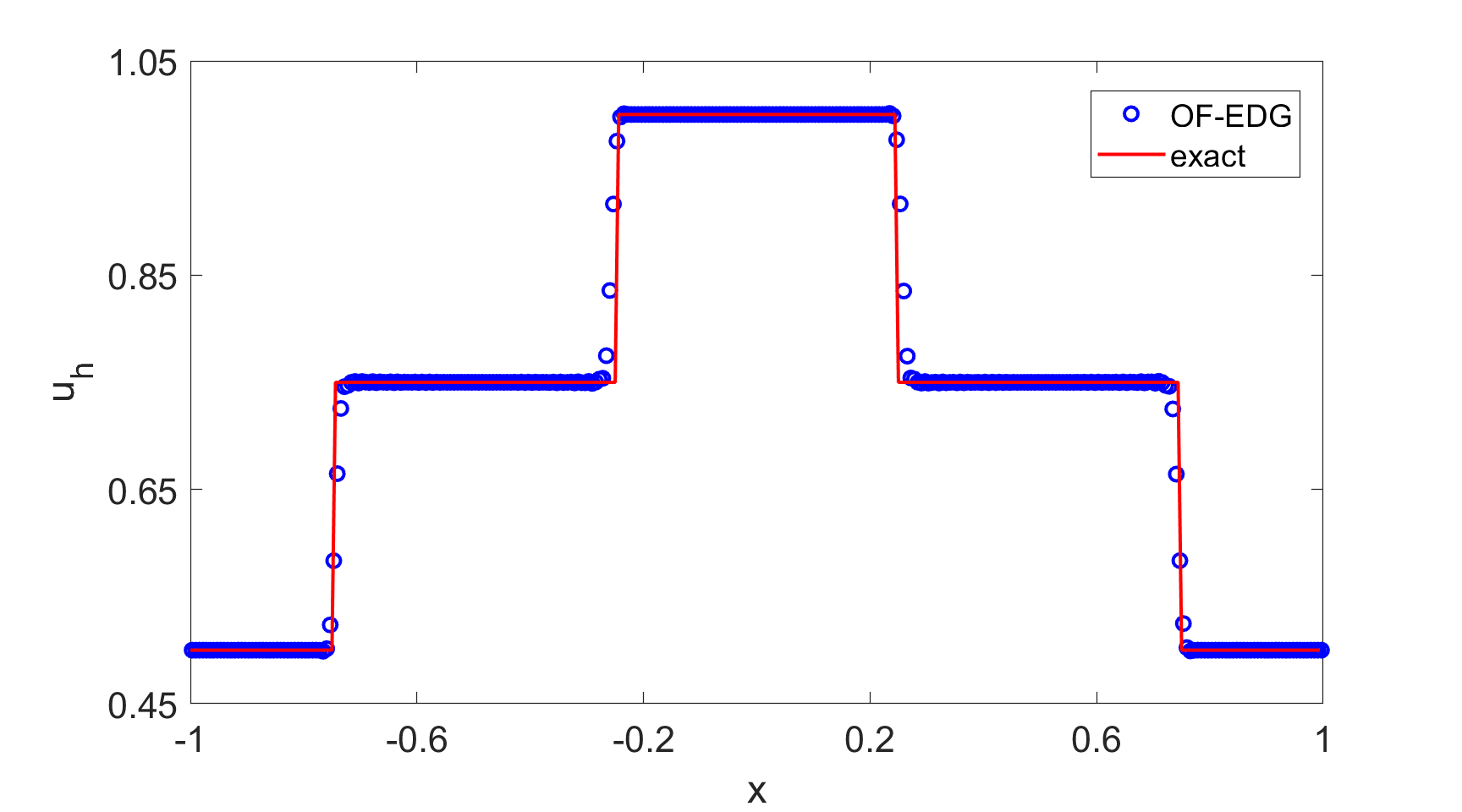}}
\caption{{Example \ref{ex3}: numerical solutions with different schemes and cell numbers at $t=0.25$. }}
\label{fig:ex3}
\end{figure}

\begin{example}\label{ex4} (Sine-Gorden equation)\end{example}

We consider the semi-linear wave equation in domain $\Omega = (0,1)$, which contains a nonlinear source term $g(u)=160\sin(u)$,      
     \begin{equation*}
         u_{tt} = u_{xx} + 160\sin(u),
     \end{equation*} 
and the initial values are piecewise constant function as follows,
\begin{equation*}
u(t,0) =
    \begin{cases}
        5, & 0.3\leq x\leq 0.425,\\
        2.5,& 0.575\leq x \leq 0.7,\\
        0,& \text{otherwise},
    \end{cases}
    \quad u_t(x,0) = 0.
\end{equation*}
In this experiment, we employ scheme (\ref{eq:DGnonlinear_1D}) with $\chi = 1$ to treat nonlinear terms. We choose $p = 2$, $q = 1$. In this example, an exact solution is not available. Instead, we compare the numerical solution obtained using our OF-EDG method on a mesh with 320 cells to the solution via a standard finite difference Central in Time and Central in Space (CTCS) method on a finer mesh with 1000 points. As shown in Figure \ref{fig:ex4}, we  observe that both solutions exhibit the same overall shape when $t = 0.25$. However, the OF-EDG solution is free from oscillations, highlighting its superior stability and accuracy near discontinuities.

\begin{figure}[!ht]
\centering
\subfigure[CTCS scheme.]{\includegraphics[width=3.0in]{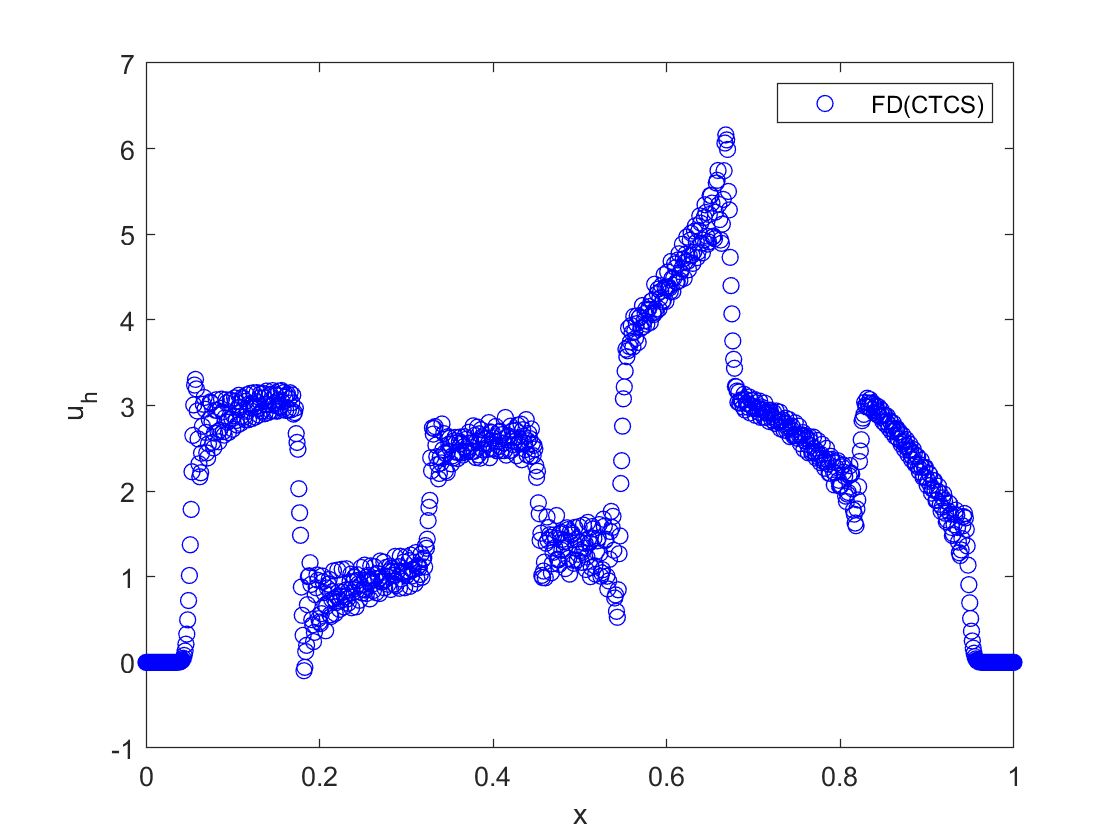}}
\subfigure[OF-EDG scheme.]{\includegraphics[width=3.0in]{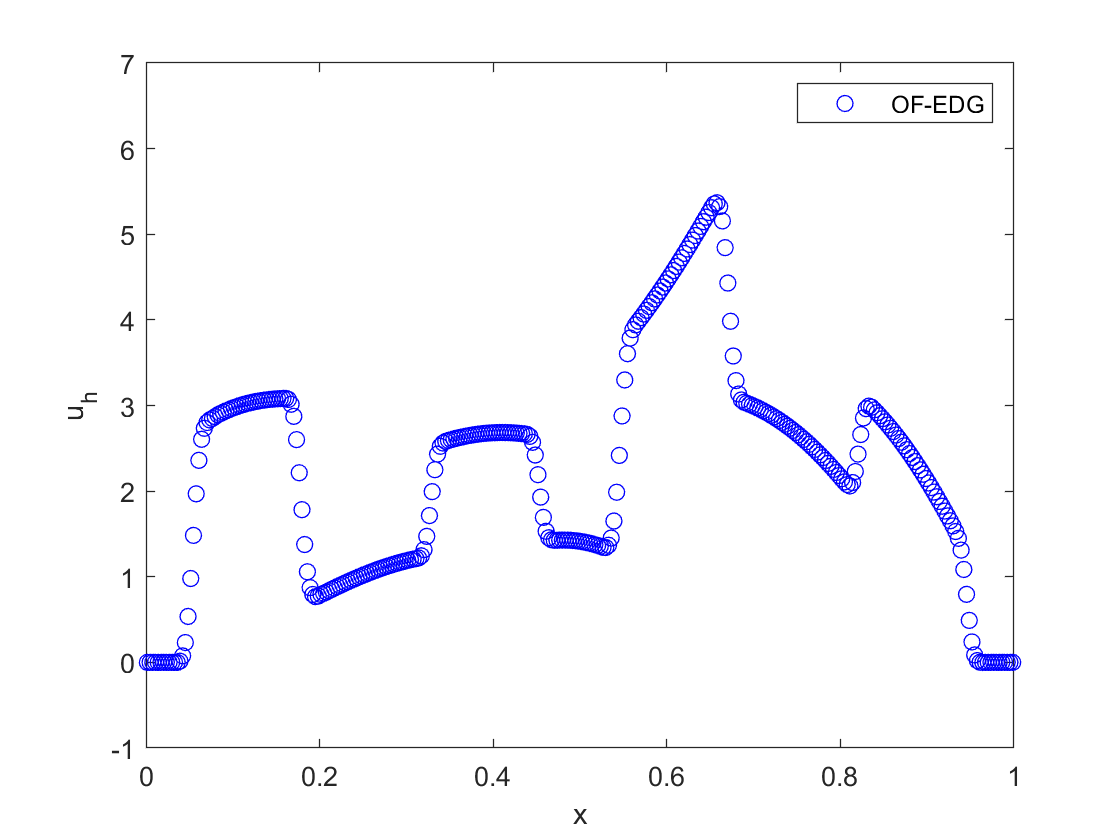}}
\caption{{Example \ref{ex4}: numerical results of $u_h$ at $t=0.25$.}}
\label{fig:ex4}
\end{figure}

\begin{example}\label{ex5} (Klein-Gorden equation) \end{example}
    
We consider the semi-linear wave equation in domain $\Omega = (0,1)$, which contains a nonlinear source term $g(u)=4u^3$, 
     \begin{equation*}
         u_{tt} = u_{xx} + 4u^3,
     \end{equation*} 
 and piecewise constant initial data,
\begin{equation*}
    u(t,0) =
    \begin{cases}
        4, & 0.3\leq x\leq 0.425,\\
        2, & 0.575\leq x \leq 0.7,\\
        0, & \text{otherwise}.
    \end{cases}
    \qquad u_t(x,0) = 0,
\end{equation*}
 and final time $t = 0.25$. We use scheme (\ref{eq:DGnonlinear_1D}) with $\chi = 1$ to treat the nonlinear term. We choose $p = 2, q = 1$. Since the exact solution is not available, we compare the numerical solution obtained using our OF-EDG method on a mesh with 320 cells to the solution computed using CTCS method on a finer mesh with 1000 cells. 
From Figure \ref{fig:ex5}, it can be seen that both solutions exhibit the same overall shape when $t = 0.25$, and the OF-EDG solution is free from oscillations. 

\begin{figure}[!ht]
\centering
\subfigure[CTCS scheme.]{\includegraphics[width=3.0in]{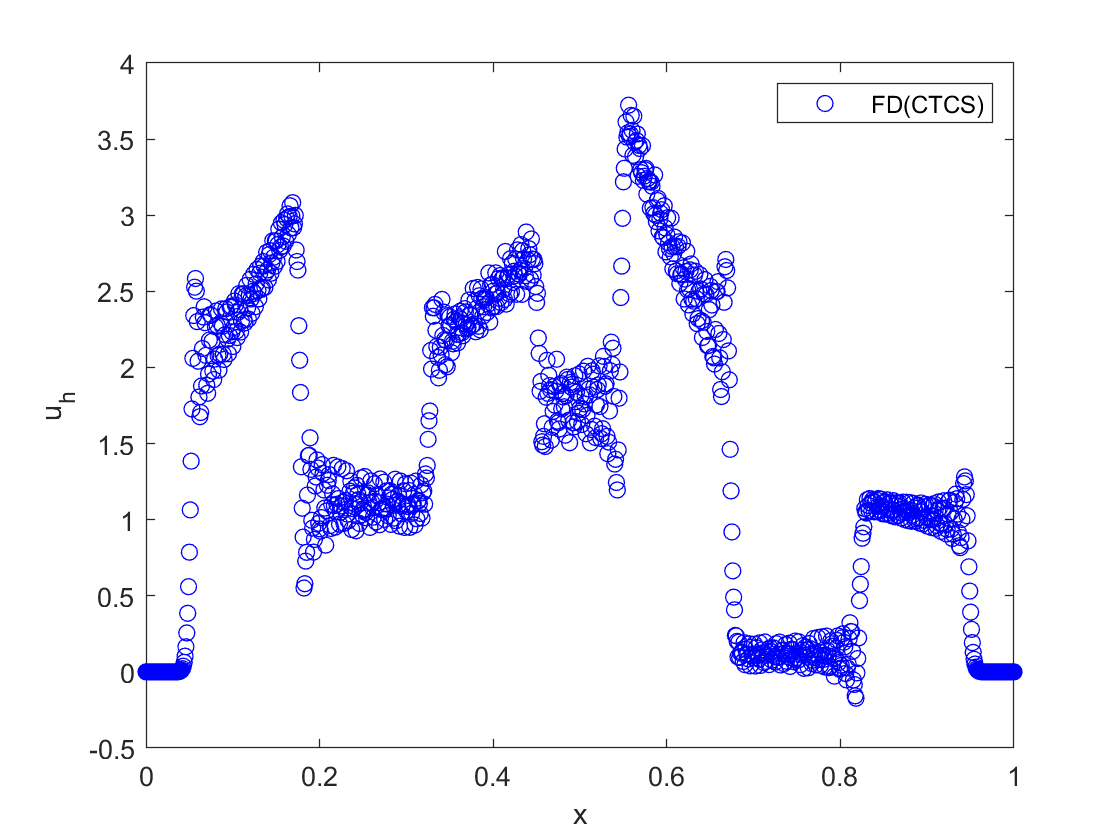}}
\subfigure[OF-EDG scheme.]{\includegraphics[width=3.0in]{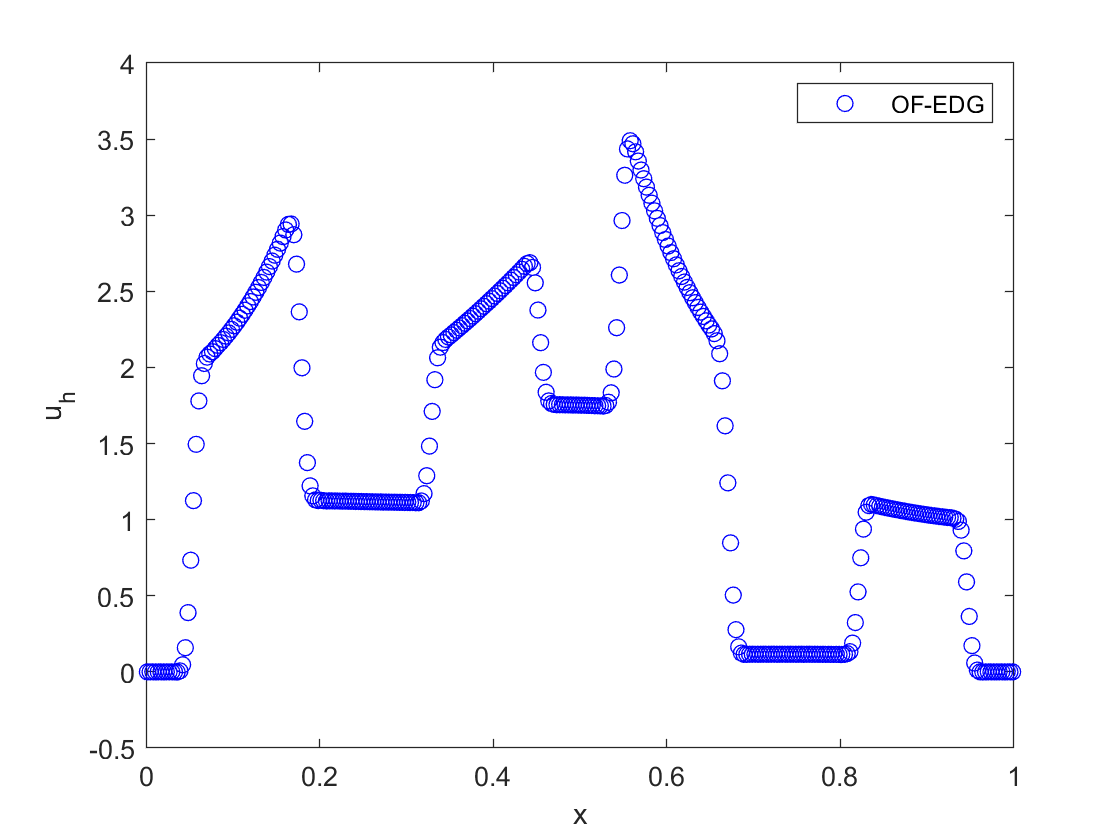}}
\caption{{Example \ref{ex5}: numerical results of $u_h$ at $t=0.25$.}}
\label{fig:ex5}
\end{figure}

\begin{remark}
We have also solved the nonlinear problems in Example \ref{ex4}-\ref{ex5} using scheme \eqref{eq:DGnonlinear_1D} with $\chi=0$, and observed almost identical results as $\chi=1$. For problems with discontinuous solutions, we have found that the scheme with $\chi=1$ is sensitive to the damping and penalty terms and requires careful adjustment on those parameters to achieve the same non-oscillatory effect.
\end{remark}


\subsection{Two dimensional problems}

\begin{example}\label{ex6} (Accuracy test for 2D linear problem)\end{example}
In this example, we consider the two-dimensional wave equation \eqref{eq:wave_2D} in domain $\Omega = [-\pi,\pi]^2$ with exact solution $u(x,t) = \sin(x+y+\sqrt{2}t)$. We test the problem with $p = 2,3,4$ and $q=p-1$. 
In Figure \ref{fig:ex6}, the $L^2$ errors of $u_h$ on uniform meshes at the final time $t=0.25$ for different choices of numerical fluxes and polynomial degrees are presented,  
demonstrating the same conclusion as the 1D case in the least-squares sense.

\begin{figure}[!ht]
\centering
\subfigure[A-flux.]{\includegraphics[width=3.0in]{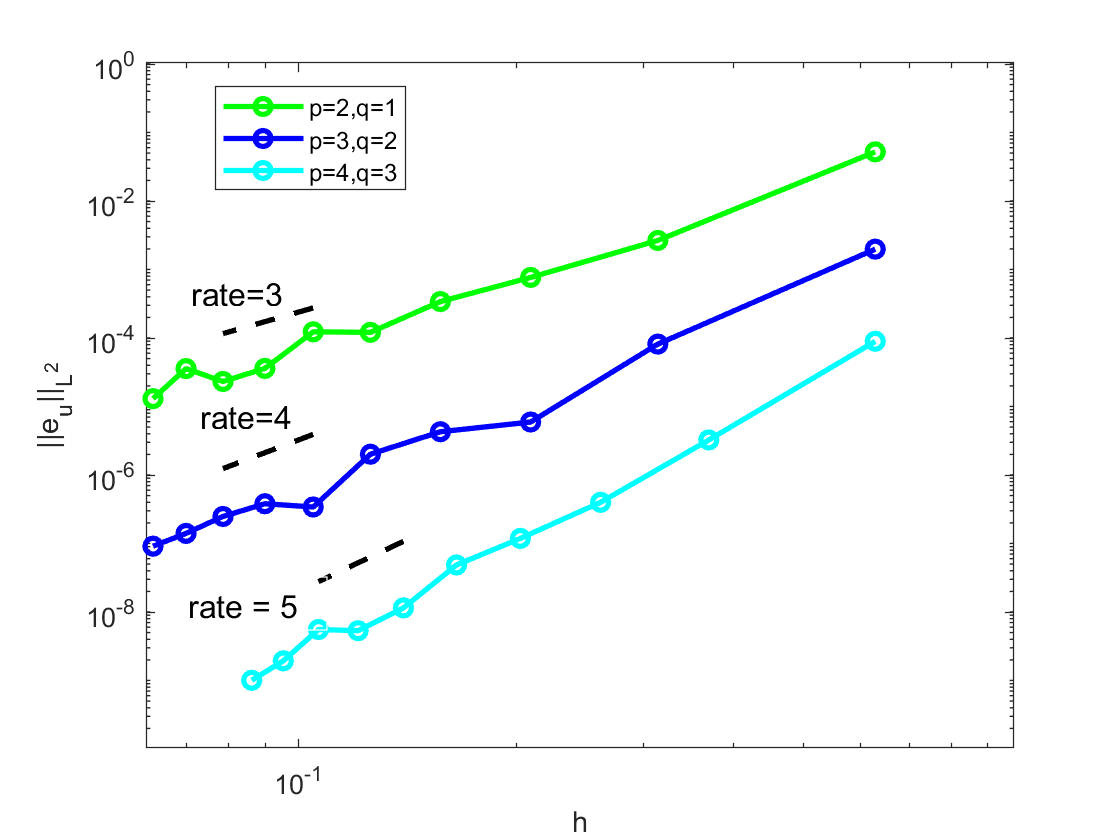}}
\subfigure[S-flux.]{\includegraphics[width=3.0in]{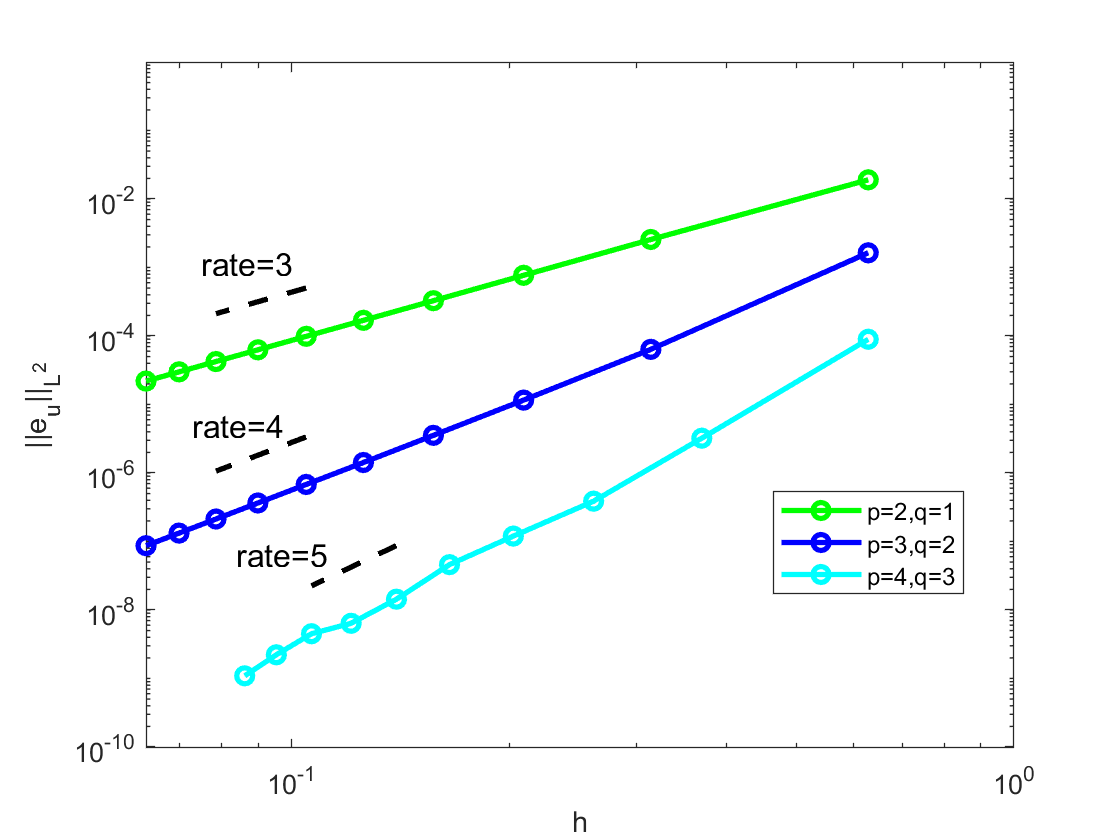}}
\subfigure[C-flux.]{\includegraphics[width=3.0in]{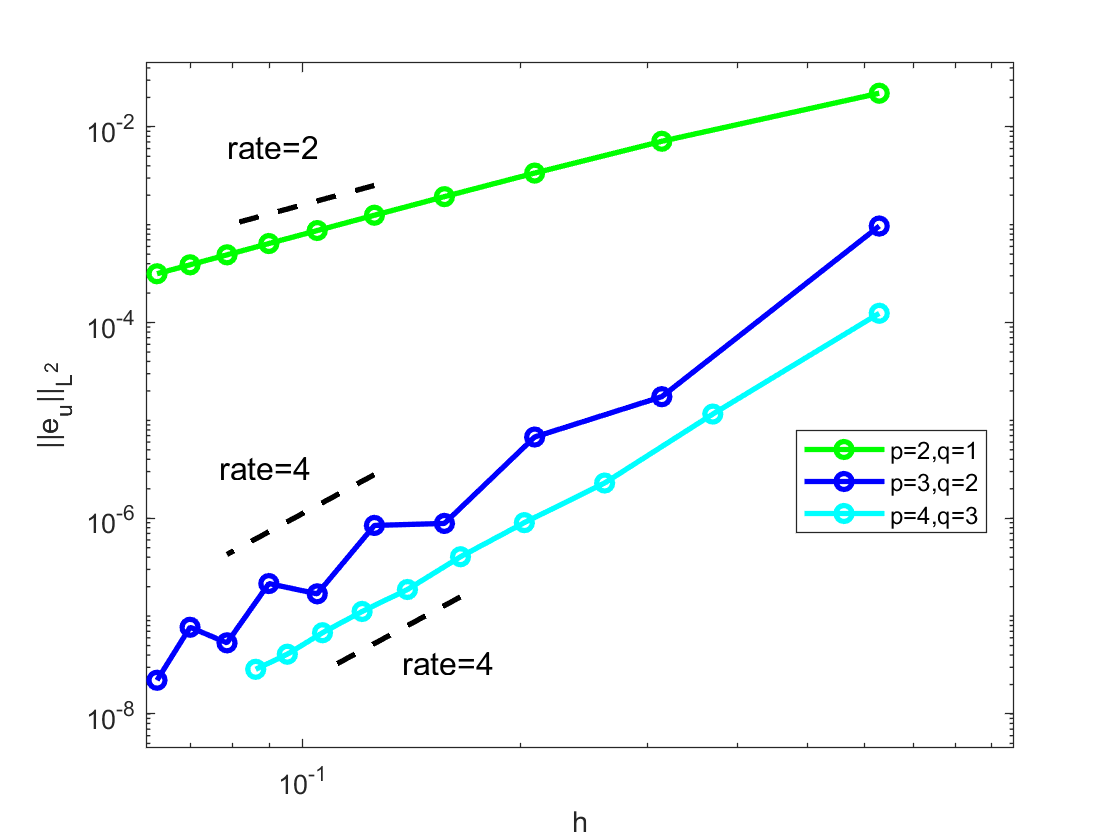}}
\caption{Example \ref{ex6}: $L^2$ errors of $u_h$ with different numerical fluxes.}
\label{fig:ex6}
\end{figure}

\begin{example}\label{ex7} (Sine-Gorden equation in 2D)\end{example}

We consider the 2D semi-linear Sine-Gorden equation in domain $\Omega = [-1,1]^2$, 
     \begin{equation*}
         u_{tt} = u_{xx} + u_{yy} + 16\sin(u),
     \end{equation*} 
with the piecewise constant initial values
\begin{equation*}
    u(x,y,0) = \begin{cases}
        0.5, & (x,y)\in [0.375,0.625]^2,\\
        0,& \text{otherwise},
    \end{cases}
    \qquad u_t(x,y,0) = 0.
\end{equation*}
Here, we choose $p = 2, q = 1$ and final time $t = 0.25$. In Figure \ref{fig:ex7}, we plot the numerical solution using our OF-EDG method on a mesh with $200\times200$ cells and that using a standard CTCS method on with $1000\times1000$ cells. It can be seen that the OF-EDG solution is free from oscillations, highlighting its superior stability near discontinuities.

\begin{figure}[!ht]
\centering
\subfigure[CTCS scheme.]{\includegraphics[width=3.0in]{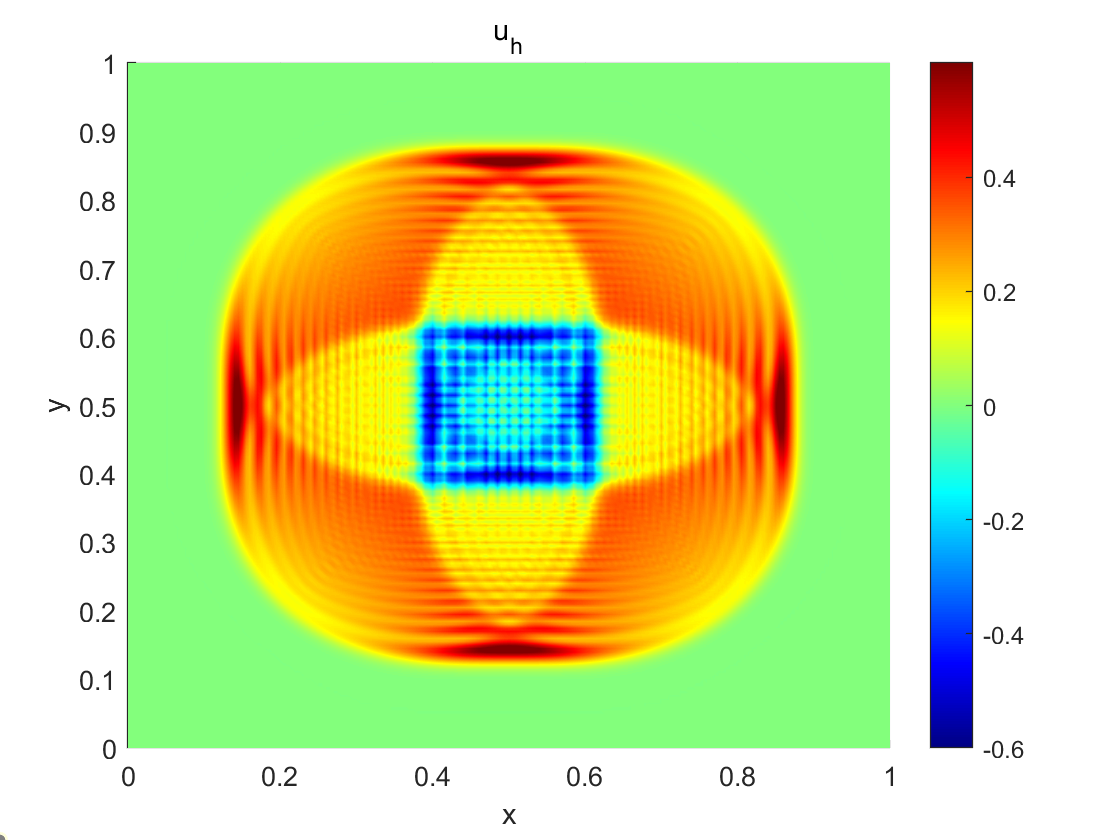}}
\subfigure[OF-EDG scheme.]{\includegraphics[width=3.0in]{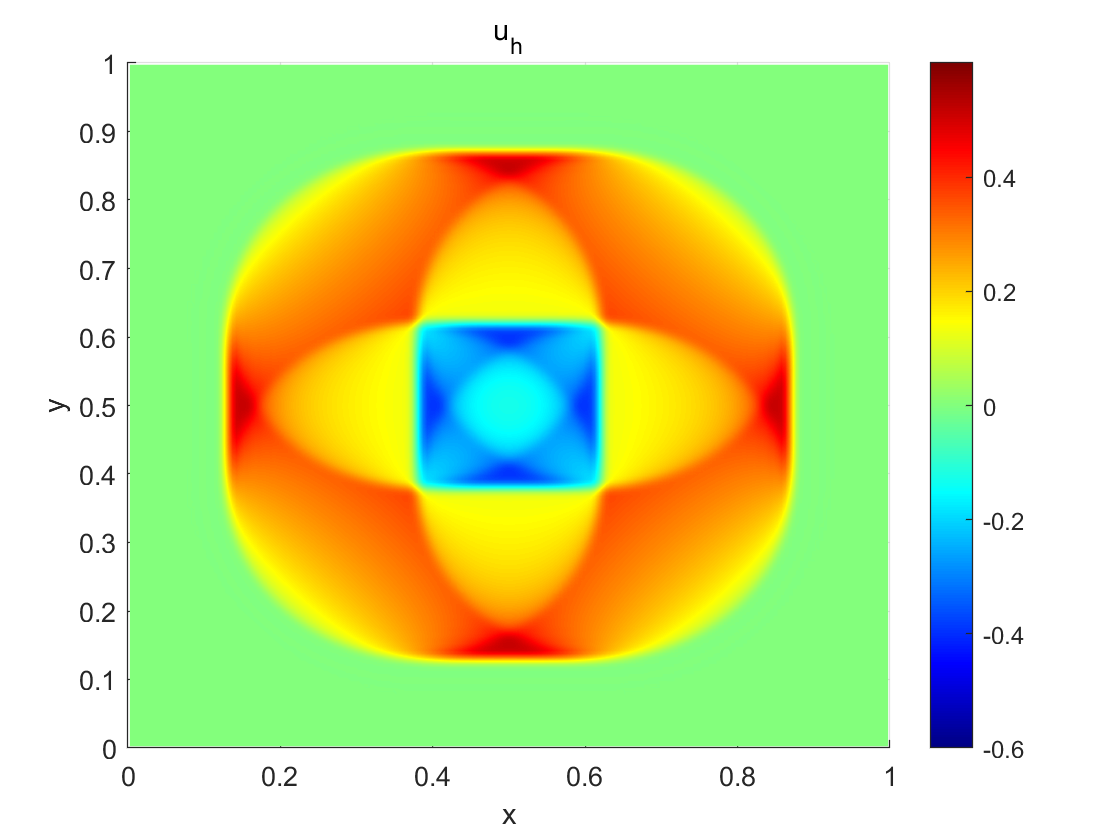}}
\caption{Example \ref{ex7}: numerical solutions $u_h$ at $t=0.25$.}
\label{fig:ex7}
\end{figure}

\begin{example}\label{ex8} (Klein-Gorden equation in 2D)\end{example}
   
Finally, we consider the 2D semi-linear Klein-Gorden equation on $\Omega = [-1,1]^2$, 
     \begin{equation*}
         u_{tt} = u_{xx} + 4u^3.
     \end{equation*} 
with initial values 
\begin{equation*}
    u(x,y,0) = \begin{cases}
        0.5, & (x,y)\in [0.3,0.425]^2,\\
        0.25,& (x,y)\in [0.575,0.7]^2,\\
        0,& \text{otherwise},
    \end{cases}
    \qquad u_t(x,y,0) = 0.
\end{equation*}
Again, we choose $p = 2, q = 1$ and
compare the numerical solution of OF-EDG method on a mesh with $320\times320$ cells to that of CTCS scheme with $1000\times1000$ cells at final time $t = 0.25$. Figure \ref{fig:ex8} demonstrates the advantages of our algorithm.

\begin{figure}[htpb]
\centering
\subfigure[CTCS scheme.]{\includegraphics[width=3.0in]{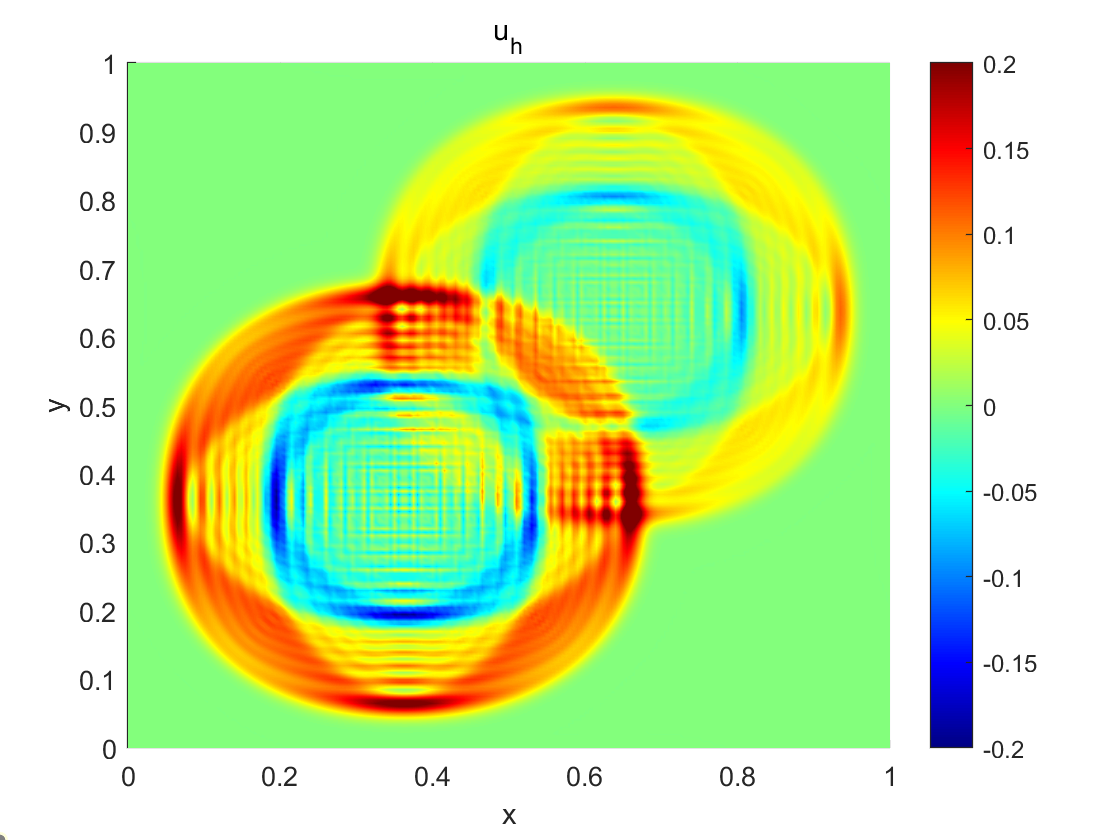}}
\subfigure[OF-EDG scheme.]{\includegraphics[width=3.0in]{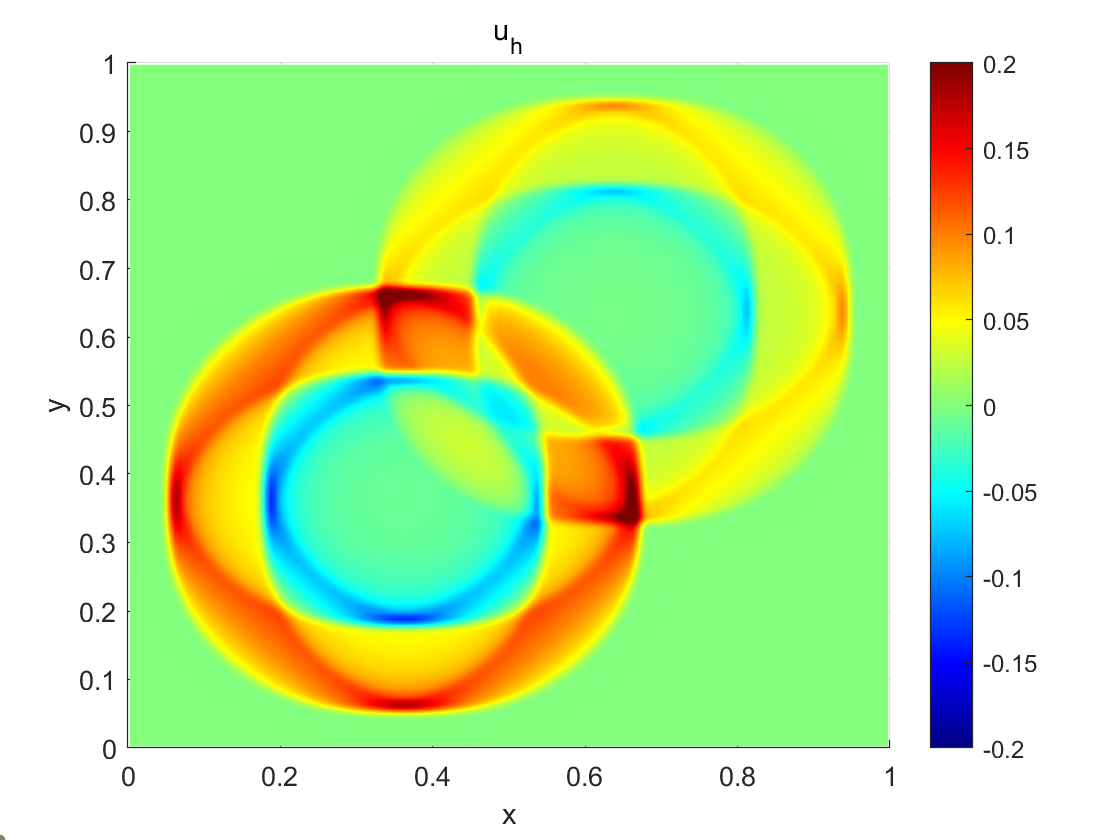}}
\caption{Example \ref{ex8}: numerical solutions $u_h$ at $t=0.25$.}
\label{fig:ex8}
\end{figure}

\section{Conclusion}\label{sec:con}
In this paper, we develop the OF-EDG method for solving the second order wave equation. Since the original EDG method is designed for smooth problems, the method produces spurious oscillations or even fails to converge when solution contains discontinuities. To overcome this difficulty, we introduce extra damping terms and penalty terms in the OF-EDG method. We prove stability and derive a priori error estimate for both one-dimensional and multi-dimensional problems. Several numerical examples are provided to verify the stability and accuracy analysis of the OF-EDG method. In addition, we have also solved the wave equation with nonlinear source terms to demonstrate the robustness of the proposed method. In the future, we would like to extend the OF-EDG method for wave propagation problems with complex geometry and nonlinear wave speed.

\section*{Acknowledgments}

The authors would like to express their gratitude to Professor Chi-Wang Shu from Brown University and Professor Yong Liu from Chinese Academy of Sciences for their valuable discussions.

\bibliographystyle{abbrv}
\bibliography{refs}

\end{document}